\newcommand\mycom[2]{\genfrac{}{}{0pt}{}{#1}{#2}}
\newtheorem{theorem}{Theorem}[section]
\newtheorem{proposition}{Proposition}[section]
\newtheorem{prop}{Proposition}[section]
\newtheorem{lemma}{Lemma}[section]
\newtheorem{definition}{Definition}[section]
\newtheorem{corollary}{Corollary}[section]
\newtheorem{problem}[]{Problem}
\newtheorem{example}{Example}
\newtheorem*{remark}{Remark}
\newcommand{\g}{{\gamma}}
\newcommand{\s}{{\sigma}}
\newcommand{\Des}{\operatorname{Des}}
\newcommand{\uu}{\textbf{u}}
\def\S{{\mathfrak S}}
\def\Z{\mathbb{Z}}
\def\N{\mathbb{N}}
\def\a{\alpha}
\def\be{\beta}
\def\asc{\mathrm{asc}}
\def\des{\mathrm{des}}
\def\exc{\mathrm{exc}}
\def\cyc{\mathrm{cyc}}
\def\fix{\mathrm{fix}}
\def\drop{\mathrm{drop}}
\def\lrmaxda{\mathrm{lrmaxda}}
\def\rlmaxdd{\mathrm{rlmaxdd}}
\def\lda{\mathrm{da}}
\def\rdd{\mathrm{dd}}
\def\lrmax{\mathrm{lmax}}
\def\rlmax{\mathrm{rmax}}
\def\fmax{\mathrm{lmaxpk}}
\def\bmax{\mathrm{rmaxpk}}
\def\lrmaxda{\mathrm{lmaxda}}
\def\rlmaxdd{\mathrm{rmaxdd}}
\def\fmin{\mathrm{bmin}}
\def\S{{\mathfrak S}}
\def\basc{\textrm{basc}}
\def\suc{\textrm{suc}}
\def\lrmin{\mathrm{lrmin}}
\def\rlmin{\mathrm{rlmin}}
\def\LRmin{\mathrm{lmin}}
\def\RLmin{\mathrm{rmin}}
\def\LRmax{\mathrm{lmax}}
\def\RLmax{\mathrm{rmax}}
\def\PRW{\mathcal{M}}
\def\DD{\mathrm{D}}
\def\UD{\mathrm{Alt}}
\def\cyc{\mathrm{cyc}}
\def\fix{\mathrm{fix}}
\def\drop{\mathrm{drop}}
\def\cdd{\mathrm{cdd}}
\def\cda{\mathrm{cda}}
\def\cpk{\mathrm{cpk}}
\def\cval{\mathrm{cval}}
\def\dd{\mathrm{dd}}
\def\da{\mathrm{da}}
\def\Cdd{\mathrm{Cdd}}
\def\Cda{\mathrm{Cda}}
\def\Cpk{\mathrm{Cpk}}
\def\Cval{\mathrm{Cval}}
\def\Fix{\mathrm{Fix}}
\def\Drop{\mathrm{Drop}}
\def\Exc{\mathrm{Exc}}
\def\Orb{\mathrm{Orb}}
\def\val{\mathrm{val}}
\def\pk{\mathrm{pk}}
\def\da{\mathrm{da}}
\def\dd{\mathrm{dd}}
\def\Drop{\mathrm{Drop}}
\def\Exc{\mathrm{Exc}}
\def\Fix{\mathrm{Fix}}
\def\Suc{\mathrm{ Suc}}
\def\Basc{\mathrm{ Basc}}
\def\Des{\mathrm{ Des}}
\def\blue{\textcolor{blue}}
\def\red{\textcolor{red}}
\def\S{\mathfrak{S}}
\def\A{\mathcal{A}}
\def\T{\mathcal{T}}
\def\Z{\mathbb{Z}}
\def\N{\mathbb{N}}
\def\g{\gamma}
\def\s{\sigma}
\def\a{\alpha}
\def\asc{\mathrm{asc}}
\def\des{\mathrm{des}}
\def\exc{\mathrm{exc}}
\def\drop{\mathrm{drop}}
\def\fix{\mathrm{fix}}
\def\cyc{\mathrm{cyc}}
\def\da{\mathrm{da}}
\def\dd{\mathrm{dd}}
\def\val{\mathrm{val}}
\def\pk{\mathrm{pk}}
\def\Des{\mathrm{Des}}
\def\AC{\mathcal{AC}}
\def\lrmax{\mathrm{lmax}}
\def\rlmax{\mathrm{rmax}}
\def\lrmaxda{\mathrm{lmaxda}}
\def\rlmaxdd{\mathrm{rmaxdd}}
\def\rlmin{\mathrm{rmin}}
\def\rlminda{\mathrm{rminda}}
\def\rsho{\mathrm{rface}}
\def\leaf{\mathrm{leaf}}
\def\and{\mathrm{d}}
\def\Web{\mathcal{CA}}
\newsavebox{\@brx}
\newcommand{\llangle}[1][]{\savebox{\@brx}{\(\m@th{#1\langle}\)}%
  \mathopen{\copy\@brx\kern-0.5\wd\@brx\usebox{\@brx}}}
\newcommand{\rrangle}[1][]{\savebox{\@brx}{\(\m@th{#1\rangle}\)}%
  \mathclose{\copy\@brx\kern-0.5\wd\@brx\usebox{\@brx}}}
\numberwithin{equation}{section}
\date{\today}
\def\and{\mathrm{d}}
\begin{document}

\title[Gamma positivity of variations of $(\a,t)$- 
Eulerian polynomials]
{Gamma positivity of variations of $(\a,t)$-Eulerian polynomials}

\author{Chao Xu and Jiang Zeng}
\address{Universite Claude Bernard Lyon 1, CNRS UMR5208, Institut Camille Jordan\\
F-69622, Villeurbanne, France}
\email{xu@math.univ-lyon1.fr, zeng@math.univ-lyon1.fr}


\begin{abstract} 
In 1977 Carlitz and Scoville introduced the cycle 
$(\alpha,t)$-Eulerian polynomials  $A^{\cyc}_n(x,y, t\,|\,\alpha)$ by 
enumerating permutations with respect to the number of excedances, drops, fixed points and cycles. In this paper,
we introduce a nine-variable  generalization  of the Eulerian polynomials $A_n(u_1,u_2,u_3,u_4, f, g,  t\,|\,\alpha, \beta)$ in terms of  descent based statistics of permutations and prove a connection formula 
 between these two kinds of generalized Eulerian polynomials.
 By exploring the connection formula,
 we derive plainly the exponential generating function of the latter polynomials and various 
 $\gamma$-positive  formulas for  variants of Eulerian polynomials.  In particular, 
our results unify and strengthen the recent 
 results   by Ji  and  Ji-Lin. 
 In related work to the transition matrix between the Specht and web bases, Hwang, Jang and Oh recently introduced 
 the web permutations, which can be characterised by  cycle  André permutations.
 We show that enumerating the latter permutations with respect to the number of drops, fixed points and cycles gives rise to the normalised $\gamma$-vectors of the $(\alpha,t)$-Eulerian polynomials. Our result generalizes and unifies several known results in the literature. 
\end{abstract}

\keywords{$(\a,t)$-Eulerlian polynomials, left-to-right maxima, cycle André permutations, cyclic valley-hopping, gamma-positivity}

\maketitle
\tableofcontents
\section{Introduction}
The Eulerian polynomials $A_n(x)$  are defined by the exponential generating function~\cite{FS70,Pe15}
\begin{equation}\label{eulerian polynomila-def}
\sum_{n\geq 0} A_n(x)\frac{z^n}{n!}=\frac{1-z}{e^{(z-1)x}-z}.
\end{equation}
 These  polynomials  have a long 
and rich history, some of which is given in   Foata's  survey~\cite{Fo10}, Stanley's EC~\cite{St12,St99} and Petersen's book~\cite{Pe15}, see also  
Foata and Schützenberger's monograph~\cite{FS70}, which remains valuable and continues to provide useful insights and information.  
  For any positive integer $n$, 
we denote  the symmetric group of    $[n]:=\{1, 2,\ldots, n\}$ by $\S_n$.
For  $\s\in \S_n$, 
 an index  $1\leq i\leq n-1$ is called
 \begin{itemize}
 \item a \emph{descent} (\textbf{des}) of $\s$ if $\s(i)>\s(i+1)$;
 \item an \emph{ascent} (\textbf{asc}) of $\s$ if $\s(i)<\s(i+1)$;
\item  an \emph{excedance} (\textbf{exc}) of $\s$ if $i<\s(i)$.
 \end{itemize}
The following 
combinatorial interpretations of $A_n(x)$ are well-known:
\begin{align}
A_n(x)=\sum_{\s\in \S_n}x^{\asc(\s)}=\sum_{\s\in \S_n}x^{\des(\s)}=\sum_{\s\in \S_n}x^{\exc(\s)}.
\end{align}


The binomial-Eulerian polynomials, which are the 
h-polynomials of stellohedrons, are defined by
\begin{equation}
\tilde{A}_n(x):=\sum_{\sigma\in \PRW_{n+1}} x^{\des(\sigma)}=1+x\sum_{m=1}^n\binom{n}{m}A_m(x),
\end{equation}
where $\PRW_n$ is the set of permutations $\s\in\S_n$ such that the first descent (if any) of $\s$ appears at the letter $n$. 
For instance,
  we have 
\[
\PRW_2=\{12,21\} \quad \textrm{and}\quad
 \PRW_3=\{123,132,231, 312,321\}.
\]

Any polynomial with real coeffcients $h(x)=\sum_{i=0}^n h_i x^i$ 
satisfying $h_i=h_{n-i}$ can be expressed uniquely in the form
$h(x)=\sum_{k=0}^{\lfloor n/2\rfloor}\gamma_kx^k(1+x)^{n-2k}$. The coefficients $\gamma_k$  are called the $\gamma$-coefficients of $h(x)$. If the $\gamma$-coefficients of $h(x)$ are all nonnegative, then $h(x)$ is said to be $\gamma$-positive. It is often of interest in these cases to find combinatorial interpretations to the $\gamma$-coefficients. 
It is known~\cite{FS70,PZ21}  that the Eulerian polynomials $A_n(x)$ have the following 
$\gamma$-positive expansion 
 \begin{equation}\label{euler-gamma}
 A_{n+1}(x)=
 \sum_{j=0}^{\lfloor n/2\rfloor}2^j d_{n,j}\,x^j(1+x)^{n-2j},
 \end{equation}
 where $d_{n,j}$ is the number of Andr\'e permutations in $\S_{n+1}$ with $j$ descents.
We refer the reader to 
  \cite{LZ15, SW20, PZ21} for some recent  developments  in this direction and 
to \cite{At18, Pe15}  for a general  overview of the subject.
The gamma positivity of $\tilde{A}_n(x)$ follows from a general result of Postnikov, Reiner, and Williams~\cite[Section~10.4]{PRW08}, and implies that these polynomials are symmetric. The coefficients of these polynomials can also be beautifully expressed as sums of products of Eulerian numbers and binomial coefficients.  Let $A_n(x)=\sum_{k=0}^{n-1}\left\langle \mycom{n}{k}\right\rangle x^k$. 
In related work, for integers $a, b\geq 0$,  Chung, Graham, and Knuth~\cite{CGK10} found the following 
identity with three proofs:
\begin{equation}\label{cgk}
\sum_{k\geq 0} \binom{a+b}{k}
\left\langle \mycom{k}{a-1}\right\rangle=
\sum_{k\geq 0}\binom{a+b}{k}
\left\langle \mycom{k}{b-1}\right\rangle,
\end{equation}
where $\left\langle\mycom{0}{0}\right\rangle=1$.
Shareshian and Wachs~\cite{SW20} noticed that the above identity corresponds exactly 
to the palindromicity  of  the coefficients of $\tilde{A}_n(x)$. 

For 
$\s\in \S_n$,  
 an index  $i\in [n]$ is a
 \begin{itemize}
 \item  \emph{drop} (\textbf{drop}) of $\s$ if $i>\s(i)$;
\item  \emph{fixed point} (\textbf{fix}) of $\s$ if $i=\s(i)$.
 \end{itemize}
We shall also consider a permutation $\sigma\in \S_n$ as a word
$\sigma=\sigma_1\ldots \sigma_n$ with $\sigma_i:=\sigma(i)$ for
$i\in [n]$.
Say that a letter $\s_i$ is a
\begin{itemize}
    \item 
 \emph{left-to-right maximum} (\textbf{lrmax}) of $\s$
if $\s_i>\s_j$ for every $j<i$,
\item 
 \emph{right-to-left maximum} (\textbf{rlmax})  of $\s$ if $\s_i>\s_j$ for every  $j>i$.  
\end{itemize}

In the middle of 1970's 
Carlitz-Scoville considered  several multivariate 
 Eulerian polynomials,  among which  are the so-called $(\a, \be)$-Eulerian polynomials~\cite{CS74} 
 \begin{subequations} 
\begin{equation}\label{Carlitz-Scoville-alpha-beta-polynomials}
A_n(x,y\,|\, \a, \be):=\sum_{\s\in \S_{n+1}}
x^{\asc(\s)}y^{\des(\s)}\a^{\lrmax(\s)-1}\be^{\rlmax(\s)-1},
\end{equation}
and the  following  ones~\cite{CS77}, that we refer to \textbf{cycle $(\a,t)$-Eulerian polynomials},  
\begin{align}\label{CS:def1}
A^{\cyc}_n(x,y, t\,|\,\alpha):=\sum_{\s\in \S_n} x^{\exc(\s)} y^{\drop(\s)}t^{\fix(\s)}\alpha^{\cyc(\s)},
\end{align}
\end{subequations} 
where  $\cyc(\s)$ denotes 
the number of cycles of $\s$.
 The first values of these polynomials are
\begin{align*}
A_1^{\cyc}( x,y,t\,|\,\alpha)&= \a  t,\\
A_2^{\cyc}( x,y,t\,|\,\alpha)&=\a x y +\a^{2}  t^{2},\\
A_3^{\cyc}( x,y,t\,|\,\alpha)&=\a \left(x +y \right) x y +3 \a^{2}
 x y  t +\a^{3}  t^{3},\\
A_4^{\cyc}( x,y,t\,|\,\alpha)&=\a xy\left( \,x^{2}+\left(3 \a +4\right)  xy+ \,y^{2}\right)+4 \a^2xy (x +y ) t 
+6 \a^{3}   xy  t^{2}+\a^{4}  t^{4}.
\end{align*}
As  $A^{\cyc}_n(x,y, t\,|\,\alpha)=y^nA^{\cyc}_n(x/y,1, t/y\,|\,\alpha)$, 
the polynomials $A^{\cyc}_n(x,y, t\,|\,\alpha)$ are the homogeneous version of
$A^{\cyc}_n(x,1, t\,|\,\alpha)$, which are  studied   in \cite[Chapter 4]{FS70}.

For $\s=\sigma_1\ldots \sigma_n\in \S_n$
with  the boundary condition $\mathbf{0-0}$, i.e., 
$\mathbf{\s_0=\s_{n+1}=0}$, a letter  $\s_i\in[n]$ is called a
  \begin{itemize}
   \item  {\em valley} (\textbf{val}) of $\s$ if $\sigma_{i-1}>\sigma_{i}<\sigma_{i+1}$;
  \item  {\em peak} (\textbf{pk}) of $\s$ if $\sigma_{i-1}<\sigma_i>\sigma_{i+1}$;
  \item  {\em double ascent} (\textbf{da}) of $\s$ if $\sigma_{i-1}<\sigma_{i}<\sigma_{i+1}$;
  \item  {\em double descent} (\textbf{dd}) of $\s$ if $\sigma_{i-1}>\sigma_{i}>\sigma_{i+1}$.
  \end{itemize}
It is clear that  the following identities hold
 \begin{align}\label{relation:V and W}
\val=\pk-1,\quad \asc=\val+\lda,\quad \des=\val  +\rdd.
 \end{align} 
Recently,    
refining  the  $(\alpha,\beta)$-Eulerian polynomials $A_n(x,y\,|\, \a, \be)$,  
Ji~\cite{Ji23} 
  considered  a variation of Eulerian polynomials incorporating six  statistics over permutations in $\S_{n+1}$:
\begin{equation}\label{refine-CS-JI}
    A_n(u_1,u_2,u_3,u_4\,|\,\alpha, \beta)
:=\sum_{\sigma \in \mathfrak{S}_{n+1}}(u_1u_2)^{{\val}(\sigma)}u_3^{\lda(\sigma)}u_4^{\rdd(\sigma)}{\alpha}^{\lrmax(\sigma)-1}{\beta}^{\rlmax(\sigma)-1}.
\end{equation}
The  main result in~\cite[Theorem 1.4]{Ji23}   is a  formula for the  exponential generating function of $A_n(u_1,u_2,u_3,u_4\,|\,\alpha, \beta)$, which is proved  by 
the context-free grammar calculus.~\footnote{Ji~\cite[(3.35)-(3.37)]{Ji23} assumed that $\a$ and $\beta$ are integers in her proof. 
In the latter case Carlitz-Scoville gave a different combinatorial interpretation of the same generating function~\cite[Theorem 5]{CS74}.} This   
generating function is shown to be   connected with  a bunch of  enumerative  polynomials in the literature~\cite{Ji23}.  
In a subsequent paper~\cite{JL23}, using  the context-free grammar calculus and  group actions over permutations, Ji and Lin proved  
  the $\gamma$-positivity of two $\a$-analogues of  the  Eulerian polynomials and  an $\alpha$-analogue of 
Chung-Graham-Knuth's symmetric Eulerian identity~\cite{CGK10}, see also \cite{PRW08, SW20}.

 This paper originally arose from the desire to provide 
 an alternative approach to  Ji's generating function formula~\cite[Theorem 1.4]{Ji23} via known
 results in the literature, 
 which  led us to     
  the following connection formula:
 \begin{equation}\label{equ: CS-A_n^cyc}
 A_n(x,y\,|\,\a, \be)=A_n^{\cyc}\biggl(x,y,\frac{\a x+\be y}{\a+\be}\,|\, \a+\be\biggr),
 \end{equation} 
 and more generally
 \begin{equation}\label{CS-Ji connection}
 A_n(u_1,u_2,u_3,u_4\,|\,\alpha, \beta)=A_n^{\cyc}\biggl(x,y,\frac{\a u_3+\be u_4}{\a+\be}\,|\, \a+\be\biggr),
 \end{equation}
 where $xy=u_1u_2$ and $x+y=u_3+u_4$.

 The generating function of $A_n(u_1,u_2,u_3,u_4\,|\,\alpha, \beta)$ then  follows 
 by combining \eqref{CS-Ji connection} 
 with the known generating function of $A^{\cyc}_n(x,y, t\,|\,\alpha)$,  which has several known combinatorial proofs. Actually, we will generalize 
Ji's polynomial  with three more variables   
in order to unify the  two families of 
 polynomials in~\cite{Ji23,JL23} as well as 
  some recent  results in \cite{JL23, M2Y224,MQYY24}. 

For  a permutation $\s=\sigma_1\ldots \sigma_n\in \S_n$, we say that an index
 $i\in [n]$ is a
\begin{itemize}
\item  \emph{cycle peak} (\textbf{cpk}) of $\s$ if $\sigma^{-1}(i)<i>\sigma(i)$; 
\item  \emph{cycle valley} (\textbf{cval}) of $\s$ if $\sigma^{-1}(i)>i<\sigma(i)$; 
\item  \emph{cycle double ascent}  (\textbf{cda}) of $\s$ if $\sigma^{-1}(i)<i<\sigma(i)$; 
\item  \emph{cycle double descent}  (\textbf{cdd}) of $\s$ if $\sigma^{-1}(i)>i>\sigma(i)$. 
\end{itemize}
Note  that $\cpk(\s)=\cval(\s)$.
The following is our  first main result.
\begin{theorem}\label{master1} If $xy=u_1u_2$ and $x+y=u_3+u_4$, then
\begin{equation}\label{eq:master theorem}
A_n^{\cyc}(x,y, t\,|\,\alpha)= 
\sum_{\s\in \S_n} (u_1 u_2)^{\cpk(\s)}u_3^{\cda(\s)}u_4^{\cdd(\s)}t^{\fix(\s)}\alpha^{\cyc(\s)}.
\end{equation}
\end{theorem}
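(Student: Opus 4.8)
The plan is to first replace the excedance/drop statistics by their cyclic refinements, and then to invoke a cyclic valley-hopping involution to show that the resulting polynomial depends on $x,y$ only through $x+y$ (and $xy$), which is exactly what the two conditions $xy=u_1u_2$ and $x+y=u_3+u_4$ encode.

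First I would record the elementary statistic identities. Every non-fixed $i\in[n]$ satisfies exactly one of the four cyclic conditions, and by inspection an excedance ($i<\s(i)$) is precisely a cycle valley or a cycle double ascent, while a drop ($i>\s(i)$) is precisely a cycle peak or a cycle double descent. Hence
\[
\exc(\s)=\cval(\s)+\cda(\s),\qquad \drop(\s)=\cpk(\s)+\cdd(\s).
\]
Using $\cpk(\s)=\cval(\s)$, this gives $x^{\exc(\s)}y^{\drop(\s)}=(xy)^{\cpk(\s)}x^{\cda(\s)}y^{\cdd(\s)}$, so that
\[
A_n^{\cyc}(x,y,t\,|\,\a)=\sum_{\s\in\S_n}(xy)^{\cpk(\s)}x^{\cda(\s)}y^{\cdd(\s)}t^{\fix(\s)}\a^{\cyc(\s)}.
\]
Since $u_1u_2=xy$, the factor $(u_1u_2)^{\cpk}$ on the right-hand side of \eqref{eq:master theorem} already matches $(xy)^{\cpk}$, so it remains only to show that replacing the pair $(x,y)$ by $(u_3,u_4)$ with $u_3+u_4=x+y$ leaves the sum unchanged.

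To that end I would prove the stronger structural fact that, for indeterminates $X,Y$, the polynomial
\[
B_n(X,Y):=\sum_{\s\in\S_n}(xy)^{\cpk(\s)}X^{\cda(\s)}Y^{\cdd(\s)}t^{\fix(\s)}\a^{\cyc(\s)}
\]
depends on $X,Y$ only through the single variable $X+Y$ (the coefficients lying in $xy,t,\a$). Granting this, $B_n(x,y)=B_n(u_3,u_4)$ whenever $x+y=u_3+u_4$, and comparing with the displayed formula for $A_n^{\cyc}$ yields \eqref{eq:master theorem}. The mechanism is a cyclic valley-hopping action: for each element $x$ that is a cycle double ascent or cycle double descent of $\s$, I would define an involution $\varphi_x$ on $\S_n$ that interchanges the two types at $x$ (cda $\leftrightarrow$ cdd) while fixing the predecessor/successor comparisons at every other element, so that the cycle peaks, cycle valleys, fixed points and cycles of $\s$ are all preserved and $x$ stays inside its own cycle. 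The $\varphi_x$ act on disjoint data and commute, hence generate an elementary abelian $2$-group action on the orbit of $\s$ indexed by the free set $F(\s)=\{x:\ x\text{ is a cda or cdd}\}$. Because cycle peaks and cycle valleys are globally preserved, $F(\s)$ is constant along each orbit $\mathcal O$ and, within $\mathcal O$, every element of $F$ independently realises both types; thus $\cda+\cdd=|F|$ is orbit-constant and
\[
\sum_{\s\in\mathcal O}X^{\cda(\s)}Y^{\cdd(\s)}=(X+Y)^{|F|}.
\]
Summing over orbits, and using that $\cpk$, $\fix$ and $\cyc$ are orbit-constant, writes $B_n(X,Y)=\sum_{\mathcal O}(xy)^{\cpk}t^{\fix}\a^{\cyc}(X+Y)^{|F|}$, which is manifestly a polynomial in $X+Y$, as required.

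I expect the main obstacle to be the construction and verification of $\varphi_x$: one must define the hop of $x$ within its cycle so that it toggles cda/cdd at $x$ yet leaves the cycle-peak/cycle-valley status of all \emph{other} elements—and the number of cycles and the set of fixed points—intact. This is the cyclic counterpart of the modified Foata–Strehl (valley-hopping) action, and the bookkeeping needed to show global preservation of the cpk/cval set (equivalently, that $\varphi_x$ creates or destroys no peaks or valleys elsewhere) is the delicate step. One natural route is to transport the classical word-level action through Foata's fundamental bijection, which converts the cyclic statistics into the linear statistics $\da,\dd,\pk,\val$, and then to check that the required orbit structure survives the transport; the other is to define the hop intrinsically on the cycle of $x$ and verify the invariants directly.
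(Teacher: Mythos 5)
Your proposal is correct and follows essentially the same route as the paper: the identities $\exc=\cval+\cda$, $\drop=\cpk+\cdd$ together with the cyclic valley-hopping $\Z_2^n$-action (the modified Foata--Strehl action transported through Foata's fundamental transformation, with fixed points left untouched), whose orbit-invariance of $\cpk$, $\cval$, $\fix$, $\cyc$ and toggling of $\cda$/$\cdd$ yields the orbit weight $(u_1u_2)^{\cpk}(u_3+u_4)^{\cda+\cdd}t^{\fix}\a^{\cyc}$. The involution $\varphi_x$ whose construction you flag as the delicate step is exactly the map $\psi_x=\theta_2^{-1}\circ\xi_x\circ\theta_2$ of Cooper, Jones and Zhuang, whose properties the paper simply cites (Proposition~\ref{prop:psi}) rather than reproves.
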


Let  $\sigma=\sigma_1\ldots \sigma_n$ be a permutation in 
$\S_n$ with  the boundary condition $\mathbf{0-0}$.  A letter $\s_i\in[n]$ is a
\begin{itemize}
    \item  \emph{left-to-right-maximum-peak} (\textbf{lmaxpk})  if 
 $\s_i$ is a left-to-right maximum and also a peak;
  \item  \emph{right-to-left-maximum-peak} (\textbf{rmaxpk})  if $\s_i$ is a  right-to-left maximum and also a peak;
\item  \emph{left-to-right-maximum-double-ascent} (\textbf{lmaxda}) if 
 $\s_i$ is a left-to-right  maximum and also  a double ascent;
  \item   \emph{right-to-left-maximum-double-descent} (\textbf{rmaxdd}) if 
   $\s_i$ is a right-to-left maximum and also 
  a double descent.  
  \end{itemize}

Let $\uu=(u_1,u_2,u_3,u_4)$ and define the generalized 
Eulerian polynomial
\begin{subequations} 
\begin{multline}\label{refine-JI2}
A_n(\uu, f, g,  t\,|\,\alpha, \beta)=\sum_{\sigma \in \mathfrak{S}_{n+1}}(u_1u_2)^{{\val}(\sigma)}u_3^{\lda(\sigma)}u_4^{\rdd(\sigma)}f^{\fmax(\s)-1}
g^{\bmax(\s)-1}\\
\times t^{\lrmaxda(\s)+\rlmaxdd(\s)}
{\alpha}^{{\lrmax}(\sigma)-1}{\beta}^{{\rlmax}(\sigma)-1}.
\end{multline}
  The following  is our second main result, which generalizes \eqref{CS-Ji connection}.
\begin{theorem}\label{master2}
If $xy=u_1u_2$ and $x+y=u_3+u_4$, then
\begin{equation}\label{equ:A(u,s,a,b)-A^cyc(x,y,t,a)-with-s}
A_n(\uu, f, g,  t\,|\,\alpha, \beta)=A_n^{\cyc}\left( x,y, \frac{\a u_3+\beta u_4}{\a f+\beta g}\,t\,|\, \a f+\beta g\right).
\end{equation}

\end{theorem}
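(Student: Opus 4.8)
The plan is to move the right-hand side of \eqref{equ:A(u,s,a,b)-A^cyc(x,y,t,a)-with-s} into the cyclic world via Theorem~\ref{master1}, expand it as a sum over $2$-coloured permutations, and then match it term-by-term with the defining sum of $A_n(\uu,f,g,t\,|\,\alpha,\beta)$ through a Foata-type bijection.

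First I would apply Theorem~\ref{master1} with $t$ replaced by $T:=\tfrac{\alpha u_3+\beta u_4}{\alpha f+\beta g}\,t$ and $\alpha$ by $A:=\alpha f+\beta g$, keeping the same $u_1,u_2,u_3,u_4$ (which still satisfy $xy=u_1u_2$, $x+y=u_3+u_4$). This rewrites the right-hand side as $\sum_{\pi\in\S_n}(u_1u_2)^{\cpk(\pi)}u_3^{\cda(\pi)}u_4^{\cdd(\pi)}T^{\fix(\pi)}A^{\cyc(\pi)}$. Splitting off the fixed points of $\pi$ from its $c_0(\pi):=\cyc(\pi)-\fix(\pi)$ non-trivial cycles collapses the rational factor, since $T^{\fix}A^{\cyc}=t^{\fix}(\alpha u_3+\beta u_4)^{\fix}(\alpha f+\beta g)^{c_0}$; expanding the two binomials yields a sum over all assignments of a colour in $\{\alpha,\beta\}$ to each cycle of $\pi$, in which an $\alpha$- (resp.\ $\beta$-) coloured fixed point contributes $\alpha u_3$ (resp.\ $\beta u_4$) and an $\alpha$- (resp.\ $\beta$-) coloured non-trivial cycle contributes $\alpha f$ (resp.\ $\beta g$). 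Writing $a_{\fix},b_{\fix}$ and $a_0,b_0$ for the numbers of $\alpha$-/$\beta$-coloured fixed points and non-trivial cycles, the weight of a coloured permutation becomes $(u_1u_2)^{\cpk}u_3^{\cda}u_4^{\cdd}t^{\fix}f^{a_0}g^{b_0}\alpha^{a_{\fix}+a_0}\beta^{b_{\fix}+b_0}u_3^{a_{\fix}}u_4^{b_{\fix}}$.

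Next I would build a bijection $\Phi$ from these coloured permutations onto $\S_{n+1}$: write each $\alpha$-cycle largest-letter-first and concatenate the blocks by increasing maxima into a word $L$; apply the mirror-image construction to the $\beta$-cycles to form a word $R$; and set $\Phi(\pi,c)=L\,(n+1)\,R$. By design the maxima of the $\alpha$-cycles (resp.\ $\beta$-cycles) are exactly the left-to-right (resp.\ right-to-left) maxima of $\sigma$ other than $n+1$, a non-trivial $\alpha$-cycle produces a letter counted by $\fmax$ and an $\alpha$-fixed point one counted by $\lrmaxda$ (symmetrically $\bmax$ and $\rlmaxdd$ on the $\beta$-side), while the classical Foata correspondence sends cycle valleys to valleys. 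Using that $n+1$ is simultaneously counted by $\fmax$ and $\bmax$, and that $\lrmax=\fmax+\lrmaxda$, one reads off
\begin{equation*}
\cpk(\pi)=\val(\sigma),\quad \fix(\pi)=\lrmaxda(\sigma)+\rlmaxdd(\sigma),\quad a_0=\fmax(\sigma)-1,\quad b_0=\bmax(\sigma)-1,
\end{equation*}
together with $a_{\fix}+a_0=\lrmax(\sigma)-1$ and $b_{\fix}+b_0=\rlmax(\sigma)-1$, so that all the $f,g,t,\alpha,\beta$ exponents match.

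The remaining point is the $u_3,u_4$ exponents, and this is where I expect the main obstacle. The mirror-image construction on the $\beta$-side interchanges ascents and descents, so $\Phi$ sends $\cda$ of a $\beta$-cycle to a double descent and $\cdd$ to a double ascent; hence $\da(\sigma)=a_{\fix}+\cda_\alpha+\cdd_\beta$ and $\dd(\sigma)=b_{\fix}+\cdd_\alpha+\cda_\beta$, where a subscript records the colour. This fails to match the factor $u_3^{\cda(\pi)}u_4^{\cdd(\pi)}$ unless one first exchanges $u_3$ and $u_4$ on the $\beta$-cycles alone. That exchange is legitimate: grouping the coloured sum by the set of $\beta$-coloured letters, the partial sum over the $\beta$-sub-permutation is, with the colour weights absorbed into the free parameters $t,\alpha$ of Theorem~\ref{master1}, an instance of that theorem's cyclic polynomial; as a polynomial identity in those free parameters it is invariant under $u_3\leftrightarrow u_4$ (the polynomial depends on $u_3,u_4$ only through $u_3+u_4$), and specialising the parameters afterwards performs the swap on the $\cda,\cdd$ weights while leaving the fixed-point weight intact. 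After this swap the $u_3$- and $u_4$-exponents become exactly $\da(\sigma)$ and $\dd(\sigma)$, $\Phi$ matches the two sums term-by-term, and \eqref{equ:A(u,s,a,b)-A^cyc(x,y,t,a)-with-s} follows; the specialisation $f=g=t=1$ recovers \eqref{CS-Ji connection}. The delicate verifications to carry out are the statistic transfers of $\Phi$ across block junctions and at the inserted letter $n+1$ (ensuring that no spurious valleys, peaks, double ascents or double descents are created) and the bookkeeping of the $u_3\leftrightarrow u_4$ symmetry in the case where the fixed-point weight itself carries a $u_4$.
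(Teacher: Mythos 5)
Your proposal is correct and follows the same skeleton as the paper's proof of Theorem~\ref{master2}: apply Theorem~\ref{master1} with $t\mapsto(\alpha u_3+\beta u_4)t/(\alpha f+\beta g)$ and $\alpha\mapsto\alpha f+\beta g$; expand $\bigl(t(\alpha u_3+\beta u_4)\bigr)^{\fix}(\alpha f+\beta g)^{\cyc-\fix}$ into a sum over cycle-coloured permutations $(\s,\tau)\in\S_n^{\bigstar}$ (red fixed points weighted $\alpha u_3t$, blue ones $\beta u_4t$, nontrivial cycles $\alpha f$ or $\beta g$); and transfer to $\S_{n+1}$ by a Foata-type insertion of $n+1$ between a red word and a blue word. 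The one place you diverge is the blue side: the paper does not mirror. It uses the variant $\theta_1$, which writes each blue cycle \emph{in the same functional direction} but with its largest letter at the end, and concatenates the cycles in decreasing order of their maxima. Because the reading direction inside each cycle is unchanged, $\theta_1$ sends blue cycle double ascents to double ascents and blue cycle double descents to double descents (this is recorded in \eqref{sta:fix-A-B}), so no $u_3\leftrightarrow u_4$ correction is ever needed. Your mirror of the max-first encoding is exactly $\theta_1$ precomposed with $\tau\mapsto\tau^{-1}$, and that inversion is precisely what exchanges $\cda$ and $\cdd$ — the source of your mismatch. Your algebraic patch is nonetheless valid: grouping by the blue support $B$, the inner sum is $A^{\cyc}_{|B|}(x,y,T\,|\,A)$ in free parameters $T,A$, and Theorem~\ref{master1} applied to both orderings of $(u_3,u_4)$ (both satisfy $u_3+u_4=x+y$) shows the sum is unchanged when $u_3,u_4$ are swapped in the $\cda,\cdd$ exponents only; since this holds identically in $T,A$, specialising afterwards to $TA=\beta u_4t$, $A=\beta g$ keeps the fixed-point weight intact, exactly as you intend. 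So the proof goes through; it simply pays with an extra symmetry lemma for what the paper's choice of $\theta_1$ gets for free, and the junction verifications you flag are the routine ones the paper compresses into \eqref{sta:fix-A-B}.
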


As the exponential generating function of $A_n^{\cyc}(x,y, t\,|\,\alpha)$ is known, the above theorem implies immediately 
the following generating function, which is our third main result.

\begin{theorem}\label{master3} 
Let $xy=u_1u_2$ and  $x+y=u_3+u_4$. We have 
\begin{equation}\label{expgen:P_n-2}
\sum_{n\geq 0}A_{n}(\uu, f,g,t\,|\, {\alpha},{\beta})\frac{z^n}{n!}
 =e^{(\a u_3+\be u_4)tz}\left(\frac{x-y}{xe^{yz}-ye^{xz}}\right)^{\alpha f+\be g}.
 \end{equation}

\end{theorem}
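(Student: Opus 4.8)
The plan is to deduce Theorem~\ref{master3} directly from the connection formula of Theorem~\ref{master2}, combined with the known exponential generating function of the cycle $(\a,t)$-Eulerian polynomials. Recall that the latter reads
\begin{equation}\label{egf-cyc}
\sum_{n\geq 0}A_n^{\cyc}(x,y,t\,|\,\a)\frac{z^n}{n!}
=e^{\a t z}\left(\frac{x-y}{xe^{yz}-ye^{xz}}\right)^{\a},
\end{equation}
a classical identity going back to Carlitz--Scoville that admits several combinatorial proofs. Granting~\eqref{egf-cyc}, the remaining content is a single substitution followed by one cancellation, which is exactly the sense in which the theorem is ``immediate''.

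Concretely, set $\Lambda:=\a f+\be g$ and $\tau:=\frac{\a u_3+\be u_4}{\a f+\be g}\,t$, so that under the hypotheses $xy=u_1u_2$ and $x+y=u_3+u_4$ Theorem~\ref{master2} reads $A_n(\uu,f,g,t\,|\,\a,\be)=A_n^{\cyc}(x,y,\tau\,|\,\Lambda)$. Summing over $n$ and invoking~\eqref{egf-cyc} with $t$ and $\a$ replaced by $\tau$ and $\Lambda$ gives
\begin{equation}
\sum_{n\geq 0}A_n(\uu,f,g,t\,|\,\a,\be)\frac{z^n}{n!}
=\sum_{n\geq 0}A_n^{\cyc}(x,y,\tau\,|\,\Lambda)\frac{z^n}{n!}
=e^{\Lambda\tau z}\left(\frac{x-y}{xe^{yz}-ye^{xz}}\right)^{\Lambda}.
\end{equation}
Since $\Lambda\tau=(\a f+\be g)\cdot\frac{\a u_3+\be u_4}{\a f+\be g}\,t=(\a u_3+\be u_4)\,t$ and $\Lambda=\a f+\be g$, substituting these two identities reproduces exactly~\eqref{expgen:P_n-2}, which completes the proof.

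The only genuine ingredient is therefore~\eqref{egf-cyc}, and this is where I would locate the sole obstacle. I would either cite it or re-derive it via the exponential formula applied to the cycle decomposition. Writing $\sum_{n\geq 0}A_n^{\cyc}(x,y,t\,|\,\a)z^n/n!=\exp(\a\,C(z))$ with $C(z)=\sum_{m\geq 1}c_m z^m/m!$, where $c_m=\sum_{\sigma}x^{\exc(\sigma)}y^{\drop(\sigma)}t^{\fix(\sigma)}$ ranges over the cyclic permutations (single $m$-cycles) of $[m]$, the factorization is legitimate because excedances and drops are invariant under order-preserving relabeling, so $c_m$ depends only on $m$. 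Here $c_1=t$ records the unique fixed point, and for $m\geq 2$ a single $m$-cycle has no fixed point, so $c_m$ carries only the joint excedance--drop distribution on $m$-cycles. The main computation is then to verify $C(z)=tz+\log\frac{x-y}{xe^{yz}-ye^{xz}}$, equivalently $\sum_{m\geq 2}c_m z^m/m!=\log\frac{x-y}{xe^{yz}-ye^{xz}}$, which rests on the classical enumeration of cyclic permutations by excedances and is confirmed at low order by $c_2=xy$ and $c_3=xy(x+y)$ arising from the $2$- and $3$-cycles. Exponentiating recovers~\eqref{egf-cyc}, and hence Theorem~\ref{master3}.
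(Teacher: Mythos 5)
Your proposal is correct and follows essentially the same route as the paper, which likewise obtains Theorem~\ref{master3} immediately by substituting the connection formula of Theorem~\ref{master2} into the known exponential generating function \eqref{equ:gen-CS-cyc} of $A_n^{\cyc}(x,y,t\,|\,\a)$, using exactly your cancellation $(\a f+\be g)\cdot\frac{\a u_3+\be u_4}{\a f+\be g}\,t=(\a u_3+\be u_4)t$. Your sketch of re-deriving \eqref{equ:gen-CS-cyc} by the exponential formula applied to the cycle decomposition is also sound and coincides with the classical Foata--Sch\"utzenberger proof that the paper cites, so nothing further is required.
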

\end{subequations}

The rest of this paper is organized as follows. In Section~2.1
we review some known generating functions and prove Theorem~\ref{master3} by combining Theorems~\ref{master1} and \ref{master2}. In Sections~2.2-2.5, 
we introduce 
$(\a,t)$-Eulerian polynomials $A_n(x,y, t\,|\,\alpha)$ and $(\a,t)$-binomial-Eulerian polynomials $\widetilde{A}_n(x,y,t\,|\,\alpha)$
and prove that these polynomials  have  interesting $\gamma$-expansions, which  include   $t$-analogues of Ji-Lin's results~\cite[Theorems 1.5 and 1.6]{JL23}. Moreover,  we present an $(\a,t)$-analogue of 
\eqref{euler-gamma} with 
three combinatorial interpretations for the  $(\a,t)$-analogue of $d_{n,j}$ over cycle André permutations and André permutations of types 1 and 2 (Theorem~\ref{main-theorem1}). 
In Section~3, we prove the main results in Theorems~\ref{master1} and \ref{master2}. Our proof uses Foata’s (first) fundamental transformation (FFT)~\cite[Chapter 10]{Lo83} and a cyclic valley-hopping  on permutations due to  Cooper, Jones and Zhuang~\cite{CJZ20}. Previously, 
Athanasiadis-Savvidou~\cite{AS12} and  Sun-Wang~\cite{SW14} used the cyclic valley-hopping  on derangements. 
 In Section~4, we will prove Theorem~\ref{main-theorem1} about  three combinatorial interpretations for 
 the normalised $\gamma$-coefficients of $(\a,t)$-Eulerian polynomials in the models of cycle André permutations and André permutations of type 1 and type 2.
 In Section~5, by a  variant of FFT, we link  the polynomial $A_n^{\cyc}( x,y,t\,|\,\alpha)$ with the 
enumeration of successions of permutations and  compute  the exponential generating function of a multivariate statistic involving the number of successions of permutations.
Finally, we close the paper with some open problems.
\section{Applications of the main theorems}  
\subsection{Exponential  generating functions: Proof of Theorem~\ref{master3}}
The exponential generating function of polynomials
$ A_n^{\cyc}(x,y, t\,|\,\alpha)$ is well-known and reads as follows
\begin{subequations}
\begin{equation}\label{equ:gen-CS-cyc}
\sum_{n\geq 0} A_n^{\cyc}(x,y, t\,|\,\alpha)\frac{z^n}{n!}=
\left(\frac{(x-y)e^{tz}}{xe^{yz}-ye^{xz}}\right)^\alpha.
\end{equation}
An equivalent form of formula~\eqref{equ:gen-CS-cyc} was 
proved in  Foata-Sch\"utzenberger's monograph~\cite[Chapitre IV  (12) and (13)]{FS70} 
by using the exponential formula.
This  formula  was later 
proved  by Cartliz-Scoville~\cite[Theorem 1]{CS77} by  calculus, see also 
Brenti~\cite[Proposition~7.3]{Br00} for a proof using  symmetric functions and plethysm.  
 Actually, 
in view of the exponential formula~\cite{FS70, St99}, Eq.~\eqref{equ:gen-CS-cyc} is  equivalent to either of the  following exponential generating functions for derangements:
\begin{align}\label{kim-zeng}
\sum_{n=0}^\infty 
A_n^{\cyc}(x,1, 0\,|\, 1)\frac{z^n}{n!}
&=
\biggl(1-\sum_{n=2}^\infty (x+x^2+\cdots +x^{n-1})\frac{z^n}{n!}\biggr)^{-1},\\
\sum_{n=0}^\infty A_n^{\cyc}(x,1, 0\,|\, -1)\frac{z^n}{n!}&=
1-\sum_{n=2}^\infty (x+x^2+\cdots +x^{n-1})
\frac{z^n}{n!}. \label{ksavrelov-zeng}
\end{align}
\end{subequations}
 Kim and Zeng~\cite{KZ01} provide  a  bijective proof of \eqref{kim-zeng}, of which    a
nice proof was given by Gessel~\cite{Ge91} in the case where $x=1$, 
 and  two  combinatorial proofs of
 \eqref{ksavrelov-zeng} are given  in~\cite{KZ02}.

The following refinement of \eqref{equ:gen-CS-cyc}  was proved   in ~\cite[Théorème 1]{Ze93}:
\begin{align}\label{equ:gen-JZ-cyc}
1+\sum_{n\geq 1}\frac{z^n}{n!} \sum_{\s\in \S_n} (u_1 u_2)^{\cpk(\s)}u_3^{\cda(\s)}u_4^{\cdd(\s)}t^{\fix(\s)}\alpha^{\cyc(\s)}=\left(\frac{(x-y)e^{tz}}{xe^{yz}-ye^{xz}}\right)^\alpha,
\end{align}
where $xy=u_1u_2$ and $x+y=u_3+u_4$.  
Clearly, formula~\eqref{equ:gen-JZ-cyc} implies \eqref{equ:gen-CS-cyc} and then Theorem~\ref{master1}. 
Reversely, combining Theorem~\ref{master1} and \eqref{equ:gen-CS-cyc} yields \eqref{equ:gen-JZ-cyc}.
\begin{subequations}
\begin{proof}[Proof of Theorem~\ref{master3}] Combining Theorem~\ref{master2}  with 
\eqref{equ:gen-CS-cyc}, we  derive immediately the exponential generating function of  $A_n(\uu,f,g,t\,|\,\a,\be)$  in \eqref{refine-JI2}, namely
\begin{align}\label{master-egf}
\sum_{n\geq 0}A_{n}(\uu, f,g,t\,|\, {\alpha},{\beta})\frac{z^n}{n!}
 =e^{(\a u_3+\be u_4)tz}\left(\frac{x-y}{xe^{yz}-ye^{xz}}\right)^{\alpha f+\be g},
 \end{align}
 where $xy=u_1u_2$ and $x+y=u_3+u_4$.
 \end{proof}

The polynomials 
 $A_{n}(\uu,1,1,1\,|\, {\alpha},{\beta})$ and $A_n(\uu, 0, 1,1\,|\, {\alpha},{\beta})$ correspond, respectively,  to 
 Ji's generalized Eulerian polynomial~\eqref{refine-CS-JI} and Ji-Lin's binomial-Stirling Eulerian polynomial~\cite[Theorem 1.5]{JL23}.  
\begin{corollary} We have 
\begin{align}\label{equ:generating functions of binomial}
\sum_{n\geq 0} A_n(\uu, 0, 1,1\,|\, {\alpha},{\beta})\frac{z^n}{n!} &=e^{(\a u_3+\beta u_4 )z}\left(\frac{x-y}{xe^{yz}-ye^{xz}}\right)^\beta,\\
\label{equivalent JI}
 \sum_{n\geq 0}A_{n}(\uu,1,1,1\,|\, {\alpha},{\beta})\frac{z^n}{n!}
 &=e^{(\a u_3+\be u_4)z}\left(\frac{x-y}{xe^{yz}-ye^{xz}}\right)^{\alpha +\be},
\end{align}
where $xy=u_1u_2$ and $x+y=u_3+u_4$.  
\end{corollary}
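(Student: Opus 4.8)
The plan is to obtain both identities as immediate specializations of Theorem~\ref{master3}, requiring no new combinatorial input. Theorem~\ref{master3} gives the generating function of $A_n(\uu, f, g, t\,|\,\a, \be)$ as $e^{(\a u_3+\be u_4)tz}\bigl((x-y)/(xe^{yz}-ye^{xz})\bigr)^{\alpha f+\be g}$, where $xy=u_1u_2$ and $x+y=u_3+u_4$. Read coefficient by coefficient in $z$, this is a polynomial identity in the parameters $f,g,t$ (the left side is a finite sum over $\S_{n+1}$), so numerical specialization of $f,g,t$ is legitimate and commutes with the summation over $n$.

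For \eqref{equivalent JI} I would set $f=g=t=1$. The exponent $\alpha f+\be g$ then collapses to $\alpha+\be$, the prefactor $e^{(\a u_3+\be u_4)tz}$ becomes $e^{(\a u_3+\be u_4)z}$, and on the left the factors $f^{\fmax(\s)-1}$, $g^{\bmax(\s)-1}$ and $t^{\lrmaxda(\s)+\rlmaxdd(\s)}$ all equal $1$, so that $A_n(\uu,1,1,1\,|\,\a,\be)$ reduces to Ji's polynomial \eqref{refine-CS-JI}; the asserted formula follows at once. For \eqref{equ:generating functions of binomial} I would instead set $f=0$, $g=1$, $t=1$, whereupon the exponent $\alpha f+\be g$ becomes $\be$ and the prefactor again reduces to $e^{(\a u_3+\be u_4)z}$, giving the stated right-hand side.

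The only point requiring care, and hence the main (mild) obstacle, is the substitution $f=0$ on the left. There the factor $f^{\fmax(\s)-1}$ must be read with the convention $0^0=1$: setting $f=0$ then kills every term with $\fmax(\s)\geq 2$ and keeps exactly the permutations $\s\in\S_{n+1}$ with $\fmax(\s)=1$. This is consistent because $\fmax(\s)\geq 1$ for every $\s$: under the boundary condition $\s_0=\s_{n+2}=0$, the largest letter $n+1$ is simultaneously a left-to-right maximum and a peak, hence an lmaxpk, so the exponent $\fmax(\s)-1$ is never negative. Thus $A_n(\uu,0,1,1\,|\,\a,\be)$ is a well-defined specialization, namely the restricted sum over permutations with a single lmaxpk, i.e.\ Ji--Lin's binomial-Stirling Eulerian polynomial, and \eqref{equ:generating functions of binomial} follows from Theorem~\ref{master3}.
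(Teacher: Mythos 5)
Your proposal is correct and matches the paper's (implicit) proof: the paper likewise obtains both identities as immediate specializations $f=g=t=1$ and $f=0$, $g=t=1$ of Theorem~\ref{master3}, identifying the specialized polynomials with Ji's polynomial \eqref{refine-CS-JI} and Ji--Lin's binomial-Stirling Eulerian polynomial. Your extra remark justifying the $f=0$ substitution via $\fmax(\s)\geq 1$ and the convention $0^0=1$ is exactly the observation the paper records when noting that $\s\in\PRW_{n+1}$ if and only if $\fmax(\s)=1$.
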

\end{subequations}

Note that \eqref{equivalent JI} is equivalent to Ji's Theorem 1.4~\cite{Ji23}.
Combining Theorem~\ref{master2}  and the continued fraction expansion of the 
ordinary generating function of $A_n^{\cyc}(x,y, t\,|\,\alpha)$ in  
\cite[Th\'eor\`eme 3]{Ze93} we obtain  the following continued fraction formula for the ordinary generating function of
$A_{n}(\uu, f,g,t\,|\, {\alpha},{\beta})$.
\begin{theorem}\label{con-frac-A}
We have
\begin{subequations}   
    \begin{equation}
\label{fix-carlitz-scoville-cf}
  \sum_{n=0}^\infty A_n(\uu,f,g,t\,|\,\alpha, \beta)z^n
 = \cfrac{1}{1-b_0 z-\cfrac{\lambda_1z^2}{1-b_1z-\cfrac{\lambda_2 z^2}{1-b_2z-\cdots}}},
\end{equation}
where
\begin{align}
b_k&=k(u_3+u_4)+(\a u_3+\beta u_4)t,\\
\lambda_{k+1}&=(k+\a f+\beta g)(k+1) u_1u_2\quad (k\geq 0).
\end{align}
\end{subequations}

\end{theorem}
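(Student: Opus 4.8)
The plan is to transfer the continued fraction from the cycle $(\alpha,t)$-Eulerian polynomials to $A_n(\uu,f,g,t\,|\,\alpha,\beta)$ via the specialization in Theorem~\ref{master2}. The starting point is the Jacobi continued fraction (J-fraction) for the ordinary generating function of $A_n^{\cyc}(x,y,\tau\,|\,a)$ established in \cite[Th\'eor\`eme 3]{Ze93}, which reads
\begin{equation*}
\sum_{n=0}^\infty A_n^{\cyc}(x,y,\tau\,|\,a)\,z^n
=\cfrac{1}{1-B_0 z-\cfrac{\Lambda_1 z^2}{1-B_1 z-\cfrac{\Lambda_2 z^2}{1-B_2 z-\cdots}}},
\end{equation*}
with coefficients
\begin{equation*}
B_k=k(x+y)+a\tau,\qquad \Lambda_{k+1}=(k+a)(k+1)\,xy\quad(k\geq 0).
\end{equation*}
Before using this, I would double-check the normalization against the first values recorded in the introduction: expanding the J-fraction to order $z^3$ gives $\mu_1=B_0=a\tau$, $\mu_2=B_0^2+\Lambda_1=a^2\tau^2+a\,xy$, and $\mu_3=a^3\tau^3+3a^2\tau\,xy+a\,xy(x+y)$, which match $A_1^{\cyc}$, $A_2^{\cyc}$ and $A_3^{\cyc}$ once $a=\alpha$ and $\tau=t$.

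Next I would apply Theorem~\ref{master2}. Since \eqref{equ:A(u,s,a,b)-A^cyc(x,y,t,a)-with-s} is an identity of polynomials valid for every $n$, multiplying by $z^n$ and summing shows that $\sum_{n\geq 0}A_n(\uu,f,g,t\,|\,\alpha,\beta)z^n$ equals the J-fraction above after the substitution
\begin{equation*}
a=\alpha f+\beta g,\qquad \tau=\frac{\alpha u_3+\beta u_4}{\alpha f+\beta g}\,t,
\end{equation*}
together with $xy=u_1u_2$ and $x+y=u_3+u_4$.

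It then remains to simplify the specialized coefficients. For the diagonal terms the factor $\alpha f+\beta g$ cancels,
\begin{equation*}
B_k=k(x+y)+a\tau=k(u_3+u_4)+(\alpha f+\beta g)\frac{\alpha u_3+\beta u_4}{\alpha f+\beta g}\,t=k(u_3+u_4)+(\alpha u_3+\beta u_4)t=b_k,
\end{equation*}
and for the off-diagonal terms
\begin{equation*}
\Lambda_{k+1}=(k+a)(k+1)xy=(k+\alpha f+\beta g)(k+1)u_1u_2=\lambda_{k+1},
\end{equation*}
which are exactly the coefficients asserted in the theorem. I do not expect a genuine obstacle: the argument is a clean composition of a known J-fraction with the polynomial identity of Theorem~\ref{master2}. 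The only point demanding care is matching the precise form and normalization of the continued fraction in \cite[Th\'eor\`eme 3]{Ze93} to the homogeneous convention \eqref{CS:def1} used here, and the order-$z^3$ check above pins this down unambiguously.
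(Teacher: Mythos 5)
Your proposal is correct and is essentially the paper's own argument: the paper proves Theorem~\ref{con-frac-A} in exactly this way, by substituting the specialization $a=\alpha f+\beta g$, $\tau=\frac{\alpha u_3+\beta u_4}{\alpha f+\beta g}\,t$ from Theorem~\ref{master2} into the J-fraction of \cite[Th\'eor\`eme 3]{Ze93} with $xy=u_1u_2$, $x+y=u_3+u_4$. Your order-$z^3$ moment check against the listed values of $A_n^{\cyc}(x,y,t\,|\,\alpha)$ is a sound extra verification of the normalization, which the paper leaves implicit.
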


\subsection{$(\a,t)$-Eulerian and binomial-Eulerian polynomials}
Define the   $(\a,t)$-Eulerian polynomials $A_n(x,y, t\,|\,\a)$ by
\begin{subequations}
\begin{align} \label{t-Ji-Lin}   
A_n\left(x,y,t\,|\, \a\right):
=\sum_{\s\in\S_{n+1}}x^{\asc(\s)}y^{\des(\s)} t^{\lrmaxda(\s)+\rlmaxdd(\s)}\a^{\LRmax(\s)+\RLmax(\s)-2},
\end{align}
which is equal to $A_{n}(x, y, x, y, 1,1,t\,|\, \a, \a)$. By Theorem~\ref{master2} we have
\begin{equation}\label{An=Ancyc}
    A_n\left(x,y,t\,|\, \a\right)=A_n^{\cyc}\left(x,y,\frac{x+y}{2}t\,|\, 2\a\right).
\end{equation}
Combining \eqref{An=Ancyc} and   
Theorem~\ref{master2} with $f=g=1$, $\a=\beta$, we obtain   the following $t$-analogue of   Ji-Lin's Theorem 1.6~\cite{JL23}.

\begin{theorem}\label{master2=1}
 If $xy=u_1u_2$ and $x+y=u_3+u_4$, then
\begin{align}
A_n\left(x,y,t\,|\, \a\right)=\sum_{\sigma \in \mathfrak{S}_{n+1}}(u_1u_2)^{{\val}(\sigma)}u_3^{{\lda}(\sigma)}u_4^{{\rdd}(\sigma)}
t^{\lrmaxda(\s)+\rlmaxdd(\s)}{\alpha}^{{\LRmax}(\sigma)+{\RLmax}(\sigma)-2}. 
\end{align}
\end{theorem}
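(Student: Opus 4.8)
The plan is to recognise the right-hand side of the stated identity as a specialisation of the nine-variable polynomial $A_n(\uu,f,g,t\,|\,\alpha,\beta)$ from \eqref{refine-JI2}, and then to feed this specialisation into the connection formula of Theorem~\ref{master2} followed by \eqref{An=Ancyc}. No new combinatorics is needed: the whole argument is a substitution into previously established results.

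First I would set $f=g=1$ and $\beta=\alpha$ in the definition \eqref{refine-JI2}. The factors $f^{\fmax(\s)-1}$ and $g^{\bmax(\s)-1}$ then disappear, while $\alpha^{\lrmax(\s)-1}\beta^{\rlmax(\s)-1}$ collapses to $\alpha^{\lrmax(\s)+\rlmax(\s)-2}$. Since $\lrmax$ and $\LRmax$ (respectively $\rlmax$ and $\RLmax$) denote the same statistic, this identifies the sum on the right-hand side of the theorem as exactly $A_n(\uu,1,1,t\,|\,\alpha,\alpha)$.

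Next I would apply Theorem~\ref{master2} at $f=g=1$, $\beta=\alpha$. The cycle exponent becomes $\alpha f+\beta g=2\alpha$, and the argument carrying $t$ simplifies to
\begin{equation*}
\frac{\alpha u_3+\beta u_4}{\alpha f+\beta g}\,t=\frac{\alpha(u_3+u_4)}{2\alpha}\,t=\frac{u_3+u_4}{2}\,t=\frac{x+y}{2}\,t,
\end{equation*}
where the last step uses the hypothesis $u_3+u_4=x+y$. Hence $A_n(\uu,1,1,t\,|\,\alpha,\alpha)=A_n^{\cyc}\bigl(x,y,\tfrac{x+y}{2}t\,|\,2\alpha\bigr)$. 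Finally, \eqref{An=Ancyc} states precisely that this last quantity equals $A_n(x,y,t\,|\,\alpha)$, and chaining the three equalities gives the claim.

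The argument is essentially forced once the right-hand side has been recognised as the $f=g=1$, $\beta=\alpha$ specialisation, so there is no real obstacle. The only point deserving a moment of care is verifying that this specialisation indeed reproduces the displayed combinatorial sum---in particular that merging $\alpha^{\lrmax(\s)-1}$ and $\alpha^{\rlmax(\s)-1}$ yields the exponent $\LRmax(\s)+\RLmax(\s)-2$---and that the $t$-weight simplifies as above using the constraint $u_3+u_4=x+y$.
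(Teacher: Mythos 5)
Your proposal is correct and follows exactly the paper's own route: the paper likewise obtains Theorem~\ref{master2=1} by specialising Theorem~\ref{master2} to $f=g=1$, $\beta=\alpha$ (so that the right-hand side of the theorem is recognised as $A_n(\uu,1,1,t\,|\,\alpha,\alpha)$ and the $t$-argument simplifies to $\frac{x+y}{2}\,t$ with cycle exponent $2\alpha$) and then invoking \eqref{An=Ancyc}. Your added care in checking that the $f^{\fmax-1}$, $g^{\bmax-1}$ factors vanish and that the $\alpha$-exponents merge to $\LRmax(\s)+\RLmax(\s)-2$ is exactly the verification the paper leaves implicit.
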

\end{subequations}
We define  the $(\a,t)$-binomial-Eulerian polynomials
$\widetilde{A}_n(x,y,t\,|\,\alpha)$ by
 \begin{subequations}
 \begin{align}
\widetilde{A}_n(x,y,t\,|\,\alpha)
=\sum_{\s\in\PRW_{n+1}}x^{\asc(\s)}y^{\des(\s)}t^{\lrmaxda(\s)+\rlmaxdd(\s)}
\a^{\LRmax(\s)+\RLmax(\s)-2},\label{A-tilde-def}
 \end{align}
 which is equal to $A_{n}(x, y, x, y, 0,1,t\,|\, {\alpha},{\alpha})$ because 
 a permutation $\s\in \S_n$ is an element of $\PRW_n$ if and only if $\fmax(\s)=1$. 
 By  Theorem~\ref{master2} we have 
 \begin{equation}
\widetilde{A}_n(x,y,t\,|\,\alpha)=A_n^{\cyc}( x,y,(x+y)t\,|\,\alpha).\label{cyc-lin1}
 \end{equation}
 Combining \eqref{cyc-lin1} and 
 Theorem~\ref{master2} 
with    $f=0$,  $g=1$, and $\a=\beta$, we obtain   the following $t$-analogue of   Ji-Lin's Theorem 1.5~\cite{JL23}.
 \begin{theorem}\label{master2=0}
If  $xy=u_1u_2$ and $x+y=u_3+u_4$, then
\begin{align}
\widetilde{A}_n(x,y,t\,|\,\alpha)=\sum_{\sigma \in \mathfrak{\PRW}_{n+1}}(u_1u_2)^{{\val}(\s)}u_3^{\lda(\sigma)}u_4^{{\rdd}(\sigma)}
t^{\lrmaxda(\s)+\rlmaxdd(\s)}
{\alpha}^{{\LRmax}(\sigma)+{\RLmax}(\sigma)-2}.
\end{align}
\end{theorem}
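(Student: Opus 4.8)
The plan is to read off this identity as the single specialization $f=0$, $g=1$, $\be=\a$ of Theorem~\ref{master2}, and then to identify the resulting cycle polynomial with $\widetilde{A}_n(x,y,t\,|\,\a)$ via \eqref{cyc-lin1}. Thus no new combinatorics is needed beyond what is already established; the work is entirely bookkeeping of the substitution.

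First I would substitute $f=0$, $g=1$, $\be=\a$ into the right-hand side of \eqref{equ:A(u,s,a,b)-A^cyc(x,y,t,a)-with-s}. The cycle exponent $\a f+\be g$ collapses to $\a$, and the fixed-point weight $\frac{\a u_3+\be u_4}{\a f+\be g}$ collapses to $\frac{\a(u_3+u_4)}{\a}=u_3+u_4=x+y$, where the last equality is the running hypothesis $u_3+u_4=x+y$. Hence the right-hand side of Theorem~\ref{master2} becomes $A_n^{\cyc}(x,y,(x+y)t\,|\,\a)$, a quantity that depends on $u_1,u_2,u_3,u_4$ only through the products and sums pinned down by $xy=u_1u_2$ and $x+y=u_3+u_4$, and is in particular independent of the individual choice of the $u_i$.

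Next I would track the same specialization on the left-hand side $A_n(\uu,0,1,t\,|\,\a,\a)$ of \eqref{refine-JI2}. The factor $g^{\bmax(\s)-1}=1$ disappears, and $\a^{\lrmax(\s)-1}\be^{\rlmax(\s)-1}$ becomes $\a^{\lrmax(\s)+\rlmax(\s)-2}=\a^{\LRmax(\s)+\RLmax(\s)-2}$. The decisive feature is the factor $f^{\fmax(\s)-1}=0^{\fmax(\s)-1}$, which evaluates to $1$ when $\fmax(\s)=1$ and to $0$ otherwise, and so acts as the indicator of the event $\fmax(\s)=1$. Using the characterization recorded just before the theorem—that $\s\in\S_{n+1}$ lies in $\PRW_{n+1}$ precisely when $\fmax(\s)=1$—this indicator restricts the sum over $\S_{n+1}$ to a sum over $\PRW_{n+1}$, and the surviving summand is exactly $(u_1u_2)^{\val(\s)}u_3^{\lda(\s)}u_4^{\rdd(\s)}t^{\lrmaxda(\s)+\rlmaxdd(\s)}\a^{\LRmax(\s)+\RLmax(\s)-2}$. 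Therefore the left-hand side is precisely the sum displayed in the statement.

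Combining the two evaluations shows that the claimed sum over $\PRW_{n+1}$ equals $A_n^{\cyc}(x,y,(x+y)t\,|\,\a)$, which in turn equals $\widetilde{A}_n(x,y,t\,|\,\a)$ by \eqref{cyc-lin1}, completing the argument. Since each step is a direct substitution into the already-proved Theorem~\ref{master2} and into \eqref{cyc-lin1}, there is no genuine obstacle here; the only point demanding care is the interpretation of $0^{\fmax(\s)-1}$ as the indicator $[\fmax(\s)=1]$ together with the $\PRW_{n+1}$ characterization, both of which are supplied earlier in the paper.
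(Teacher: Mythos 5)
Your proposal is correct and follows exactly the paper's own route: the paper likewise obtains Theorem~\ref{master2=0} by specializing Theorem~\ref{master2} at $f=0$, $g=1$, $\a=\be$ (with $f^{\fmax(\s)-1}=0^{\fmax(\s)-1}$ serving as the indicator of $\fmax(\s)=1$, i.e.\ of membership in $\PRW_{n+1}$) and then invoking \eqref{cyc-lin1}, that is, $\widetilde{A}_n(x,y,t\,|\,\alpha)=A_n^{\cyc}(x,y,(x+y)t\,|\,\alpha)$. Your bookkeeping of the collapsed weights $\a f+\be g=\a$ and $\frac{\a u_3+\be u_4}{\a f+\be g}t=(x+y)t$ matches the paper's computation, so nothing is missing.
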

 \end{subequations}

  The first four values of $\widetilde{A}_n(x,y,t\,|\,\alpha)$  are as follows:
\begin{align*}
\widetilde{A}_1(x,y,t\,|\,\alpha)&=\a t(x + y), \\
\widetilde{A}_2(x,y,t\,|\,\alpha)&=
\a^{2} t^{2}(x^2+y^{2})+(2 \a^{2} t^{2}+\a) xy,\\
\widetilde{A}_3(x,y,t\,|\,\alpha)&=\a^{3} t^{3}(x^3+y^{3})+\left(3 \a^{3} t^{3}+3 \a^{2} t+\a\right)( x^2y+xy^{2}),\\
\widetilde{A}_4(x,y,t\,|\,\alpha)&=
\a^{4} t^{4}(x^4+ y^{4})+\left(4 \a^{4} t^{4}+6 \a^{3} t^{2}+4 \a^{2} t+\a\right) (x^3y+xy^{3})\\
&\hspace{6cm}+\left(6 \a^{4} t^{4}+12 \a^{3} t^{2}+8 \a^{2} t +3 \a^2+4\a\right) x^2y^{2}.
\end{align*}


\subsection{A symmetric $(\alpha,t)$-Eulerian identity}\label{sec:Eulerian identity}
We need a variant $\theta_1$ of 
Foata's fundamental transformation~\textbf{FFT}, see 
\cite[p.30]{St12}. 
For $\s\in \S_n$,  define the mapping
$\theta_1: \s\mapsto \theta_1(\s)$   as follows:
\begin{itemize}
\item factorize $\s$ as product of disjoint 
cycles with their largest letters at the end of each cycle; 
\item write $\tilde{\s}=C_1\ldots C_r$, where the cycles $C_i's$ are arranged 
 in \textbf{decreasing order of their largest letters} from  left to right;
\item erase the parentheses in $\tilde{\s}$ to obtain 
$\theta_1(\s)$.
\end{itemize}
\begin{lemma}\label{mapping1-properties}
For $\s\in \S_n$, we have
    \begin{itemize}        \item $\exc(\s)=\asc(\theta_1(\s))$;
        \item $\fix(\s)=\rlmaxdd(\theta_1(\s))$;
        \item $\cyc(\s)=\rlmax(\theta_1(\s))$.
    \end{itemize}
\end{lemma}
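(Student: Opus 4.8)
The plan is to read all three statistics directly off the block structure of the word $w:=\theta_1(\sigma)$. Write $w=C_1C_2\cdots C_r$, where $C_i$ is the (parenthesis-free) content of the $i$-th cycle; by construction each $C_i$ ends with its largest letter $m_i$, and the blocks are ordered so that $m_1>m_2>\cdots>m_r$, with $m_1=n$. The one elementary fact I would set up first is this: each block-terminal letter $m_i$ is larger than every letter to its right in $w$ (all of which lie in $C_{i+1},\ldots,C_r$ and are bounded by $m_{i+1}<m_i$), hence $m_i$ is a right-to-left maximum; whereas any non-terminal letter of $C_i$ is followed, further right inside $C_i$, by the larger letter $m_i$, so it is not a right-to-left maximum. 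This already gives the third identity: the right-to-left maxima of $w$ are exactly the $r$ block-terminals, so $\rlmax(\theta_1(\sigma))=r=\cyc(\sigma)$.

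For the first identity I would compare adjacent pairs of $w$ with the action of $\sigma$. Inside a block $C_i=a_1\cdots a_k$ one has $\sigma(a_j)=a_{j+1}$ for $1\le j\le k-1$, so the pair $(a_j,a_{j+1})$ is an ascent of $w$ precisely when $a_j<\sigma(a_j)$, that is, when $a_j$ is an excedance of $\sigma$. The terminal letter $a_k=m_i$ satisfies $\sigma(m_i)=a_1<m_i$ and is therefore not an excedance, consistently with the fact that in $w$ it is followed by a strictly smaller letter (or by the right boundary), creating a descent. Since the only adjacencies of $w$ are the intra-block ones together with these inter-block descents, summing over all blocks yields $\asc(\theta_1(\sigma))=\exc(\sigma)$.

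The second identity is the delicate one. Fixed points of $\sigma$ are exactly its length-one cycles, hence exactly the singleton blocks $(a)$ of $w$. For a singleton block $(a)$ that is \emph{not} the first block, the previous block contributes its terminal letter $m_{i-1}>a$ as the left neighbour of $a$, while the right neighbour of $a$ (the first letter of the next block, or the right boundary) is smaller; combined with the fact that $a$ is a right-to-left maximum, this makes $a$ a double descent, i.e. an $\rlmaxdd$ letter. Conversely, a non-singleton block-terminal $m_i$ has a strictly smaller left neighbour inside its own block, hence is a peak rather than a double descent and is not counted, and non-terminal letters are not right-to-left maxima at all. Thus every fixed point $a\ne n$ is matched with an $\rlmaxdd$ letter, and conversely.

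The case left open — and the step I expect to be the main obstacle — is the first block, the one containing the global maximum $n=m_1$. When $n$ is a fixed point its block is the singleton $(n)$ at the very front of $w$, whose left neighbour is the boundary value rather than a larger letter, so the argument of the previous paragraph does not apply verbatim; whether this leading occurrence is still recorded as a double descent, and hence whether the count $\rlmaxdd(\theta_1(\sigma))=\fix(\sigma)$ is exact, hinges on how the boundary convention $\sigma_0=\sigma_{n+1}=0$ is applied to the record at $n$. Settling this single boundary case is precisely where care is needed; once it is reconciled, the three correspondences established above combine to prove the lemma.
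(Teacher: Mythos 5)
Your block-by-block analysis is correct as far as it goes, and it actually supplies an argument where the paper offers none: Lemma~\ref{mapping1-properties} is stated in the paper with only a verifying example ($\s=4271365$), so there is no authorial proof to compare against. Your treatment of the first and third identities is complete and airtight: inside a block $a_1\cdots a_k$ the letter following $a_j$ in the word is $\s(a_j)$, so intra-block ascents are exactly excedances, the terminal letters create descents at block boundaries, and the right-to-left maxima of $\theta_1(\s)$ are precisely the $r$ block-terminals.

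The case you leave open is, however, a genuine gap, and it is worth being blunt about why: under the paper's literal boundary convention $\s_0=\s_{n+1}=0$, the second identity is \emph{false} whenever $n$ is a fixed point of $\s$. For $\s=123\in\S_3$ one has $\theta_1(\s)=321$, and with the $0$--$0$ convention the leading letter $3$ is a peak, so $\rlmaxdd(321)=2\neq 3=\fix(\s)$. The resolution is that the statistic $\rlmaxdd$ on $\theta_1(\s)$ must be read with the left boundary larger than every letter (equivalently, $\theta_1(\s)$ is treated as if prefixed by a maximal letter). This is exactly how the word $\theta_1(\cdot)$ is used downstream: in \eqref{equ:asc-a-t-id} the relevant permutations in $\PRW_{n+1}$ begin with $n+1$, and in the bijection $\rho$ in the proof of Theorem~\ref{master2} the factor $\theta_1(\tau)$ sits immediately to the right of the letter $n+1$; indeed \eqref{equ:asc-a-t-id} itself fails already at $n=1$ under a literal $0$--$0$ reading (the word $21$ gives $\rlmaxdd=1$ while the word $1$ gives $0$), which confirms the intended convention. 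Once you set $\s_0=\infty$ for this purpose, the leading singleton $n$ becomes a double descent; and since the first letter of $\theta_1(\s)$ is a right-to-left maximum only when it equals $n$, i.e.\ only when $n$ is fixed, this change of convention affects exactly the case you flagged and nothing else. With that one sentence added, your three correspondences constitute a complete proof of the lemma.
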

\begin{example}
    If $\s=4271365$,  then 
$\tilde{\s}=(5\,3\,7)(6)(1\,4)(2)$ and  $\s':=\theta_1(\s)=5376142$. 
We verify that 
\begin{itemize}
    \item 
 $\exc(\s)=|\{1,3\}|=2$, $\fix(\s)=|\{2,6\}|=2$, $\cyc(\s)=4$; 
 \item $\asc(\s')=|\{2,5\}|=2$, $\rlmaxdd(\s')=|\{2,6\}|=2$, $\rlmax(\s')=|\{2,4,6,7\}|=4$.
 \end{itemize}
\end{example}

Define two kinds of $(\a,t)$-\emph{Eulerian numbers}  as follows:
\begin{subequations}
    \begin{equation}\label{(a,t)-Stirling-Eulerian number}
\left\langle \mycom{n}{k}\right\rangle^{\exc}_{\alpha,t}:=\sum_{
\mycom{\s\in \S_n}{\exc(\s)=n-k}}\alpha^{\cyc(\s)}t^{\fix(\s)}\qquad (1\leq k\leq n),
\end{equation}
and 
\begin{equation}\label{linear-stirling-eulerian-number}
\left\langle \mycom{n}{k}\right\rangle_{\alpha,t}^{\asc}:=
\sum_{\mycom{\s\in \S_n}{\asc(\s)=n-k}}\a^{\RLmax(\s)}
t^{\rlmaxdd(\s)} \qquad ( 1\leq k\leq n).
\end{equation}
\end{subequations}
As $\exc+\drop+\fix=n$ over $\S_n$, 
we derive  from \eqref{CS:def1} that 
\begin{align}\label{equ:exc-a-t-id}
A_n^{\cyc}(x,y,ty\,|\, \alpha)&=\sum_{\s\in \S_n} x^{\exc(\s)} y^{\drop(\s)+\fix(\s)}t^{\fix(\s)}\alpha^{\cyc(\s)}
=
\sum_{k=1}^{n}\left\langle \mycom{n}{k}\right\rangle^{\exc}_{\alpha,t} x^{n-k}y^k.
\end{align} 
If $\s\in\S_n$ with $\s_1=n$, then $\lrmaxda(\s)=0$ and $\LRmax(\s)=1$. From~\eqref{A-tilde-def}, we have
\begin{align}\label{equ:asc-a-t-id}
\widehat{A}_n\left(x,y,t\,|\, \a\right)
:=&\sum_{\s\in\PRW_{n+1}\atop \s_1=n+1}x^{\asc(\s)}y^{\des(\s)} t^{\rlmaxdd(\s)}\a^{\RLmax(\s)-1}
\nonumber\\
=&\sum_{\s\in\S_{n}}x^{\asc(\s)}y^{\des(\s)+1} t^{\rlmaxdd(\s)}\a^{\RLmax(\s)}\nonumber\\
=&\sum_{k=1}^{n}
\left\langle \mycom{n}{k}\right\rangle_{\alpha,t}^{\asc}x^{n-k+1}y^{k}.
\end{align}

\begin{theorem}\label{sum-equ:bin-(a,t)} For integers $a, b\geq 0$, we have
\begin{subequations}
    \begin{align}\label{exc=asc}
    \left\langle \mycom{n}{k}\right\rangle_{\alpha,t}:=\left\langle \mycom{n}{k}\right\rangle_{\alpha,t}^{\exc}=\left\langle \mycom{n}{k}\right\rangle_{\alpha,t}^{\asc}, 
\end{align}
and 
\begin{equation}\label{sym1}
\sum_{k\geq 0}(\alpha t)^{a+b-k} \binom{a+b}{k}
\left\langle \mycom{k}{a}\right\rangle_{\alpha,t}=
\sum_{k\geq 0}(\alpha t)^{a+b-k} \binom{a+b}{k}
\left\langle \mycom{k}{b}\right\rangle_{\alpha,t},
\end{equation}
\end{subequations}
where
 $\left\langle \mycom{0}{k}\right\rangle_{\alpha,t}=\left\langle \mycom{k}{0}\right\rangle_{\alpha,t}=\delta_{k,0}$.
\end{theorem}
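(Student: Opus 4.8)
The plan is to treat the two assertions separately: I derive \eqref{exc=asc} from the transformation $\theta_1$, and I derive \eqref{sym1} from the $x\leftrightarrow y$ symmetry of the $(\a,t)$-binomial-Eulerian polynomials.

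For \eqref{exc=asc}, first note that $\theta_1$ is a bijection of $\S_n$ onto itself. Indeed, because the cycles are listed in decreasing order of their largest letters and each such letter closes its cycle, every cycle-maximum is larger than all letters to its right in $\theta_1(\s)$; thus the cycle-maxima of $\s$ are exactly the right-to-left maxima of $\theta_1(\s)$, and one recovers the cycle structure by cutting $\theta_1(\s)$ after each right-to-left maximum. Summing the definition \eqref{(a,t)-Stirling-Eulerian number} over the fibres of $\theta_1$ and applying the three equidistributions of Lemma~\ref{mapping1-properties}, the weight $\a^{\cyc(\s)}t^{\fix(\s)}$ on $\{\exc=n-k\}$ becomes the weight $\a^{\rlmax(\s')}t^{\rlmaxdd(\s')}$ on $\{\asc=n-k\}$, which is \eqref{linear-stirling-eulerian-number}; this proves \eqref{exc=asc}, and we write $\left\langle\mycom{n}{k}\right\rangle_{\a,t}$ for the common value.

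For \eqref{sym1} I read off the coefficients of $\widetilde{A}_n(x,y,t\,|\,\alpha)$ in two ways. Substituting $t\mapsto(x+y)t$ in \eqref{equ:gen-CS-cyc} and using \eqref{cyc-lin1} gives
\[
\sum_{n\geq0}\widetilde{A}_n(x,y,t\,|\,\alpha)\frac{z^n}{n!}=e^{\a(x+y)tz}\left(\frac{x-y}{xe^{yz}-ye^{xz}}\right)^{\a}.
\]
Exchanging $x$ and $y$ leaves $e^{\a(x+y)tz}$ fixed and leaves the ratio $\tfrac{x-y}{xe^{yz}-ye^{xz}}$ fixed as well, since numerator and denominator both change sign; hence the generating function is symmetric and so is each $\widetilde{A}_n(x,y,t\,|\,\alpha)$. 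On the other hand, factoring $e^{\a(x+y)tz}=e^{\a txz}\,e^{\a tyz}$ and recognizing, through \eqref{equ:exc-a-t-id}, that $e^{\a tyz}\bigl(\tfrac{x-y}{xe^{yz}-ye^{xz}}\bigr)^{\a}$ is the exponential generating function of $A_m^{\cyc}(x,y,ty\,|\,\alpha)=\sum_a\left\langle\mycom{m}{a}\right\rangle_{\a,t}x^{m-a}y^{a}$, the product rule for exponential generating functions yields
\[
\widetilde{A}_n(x,y,t\,|\,\alpha)=\sum_{m=0}^n\binom{n}{m}(\a t)^{n-m}\sum_{a}\left\langle\mycom{m}{a}\right\rangle_{\a,t}x^{n-a}y^{a}.
\]
Comparing the coefficients of $x^{n-a}y^{a}$ and of $x^{a}y^{n-a}$ and invoking the symmetry just established gives $\sum_{m}\binom{n}{m}(\a t)^{n-m}\left\langle\mycom{m}{a}\right\rangle_{\a,t}=\sum_{m}\binom{n}{m}(\a t)^{n-m}\left\langle\mycom{m}{n-a}\right\rangle_{\a,t}$, which is exactly \eqref{sym1} upon setting $n=a+b$, the stated boundary values handling the $m=0$ contribution.

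I expect the only delicate point to be the coefficient bookkeeping in the second computation: one must check that splitting $e^{\a(x+y)tz}$ as $e^{\a txz}\,e^{\a tyz}$ corresponds, on the combinatorial side $\widetilde{A}_n=A_n^{\cyc}(x,y,(x+y)t\,|\,\alpha)$, to assigning each fixed point's factor $(x+y)$ to either the $x$-part or the $y$-part. Everything else reduces to Lemma~\ref{mapping1-properties} and the product rule for exponential generating functions.
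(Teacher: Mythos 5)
Your proposal is correct, and its overall architecture matches the paper's: part \eqref{exc=asc} is proved exactly as in the paper, by transporting the triple $(n-\exc,\fix,\cyc)$ through $\theta_1$ via Lemma~\ref{mapping1-properties} (your verification that $\theta_1$ is bijective, by cutting after right-to-left maxima, is the standard argument and is sound). For \eqref{sym1} you also arrive at the paper's key identity, namely the expansion $\widetilde{A}_n(x,y,t\,|\,\alpha)=\sum_{m}\binom{n}{m}(\alpha t)^{n-m}\sum_{a}\left\langle\mycom{m}{a}\right\rangle_{\alpha,t}x^{n-a}y^{a}$ (this is \eqref{binomial-cyc-stirling-eulerian}), and then conclude by symmetry and coefficient extraction; the difference is purely at the lemma level. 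The paper obtains the binomial convolution combinatorially, by two-coloring the fixed points of each permutation so that each fixed point's factor $t(x+y)$ splits into a $tx$-part and a $ty$-part, and it obtains the $x\leftrightarrow y$ symmetry of $A_n^{\cyc}(x,y,t(x+y)\,|\,\alpha)$ from the involution $\s\mapsto\s^{-1}$, which satisfies $(\exc,\drop,\fix)\,\s=(\drop,\exc,\fix)\,\s^{-1}$. You instead derive both facts analytically from the known generating function \eqref{equ:gen-CS-cyc}: the convolution from the factorization $e^{\alpha(x+y)tz}=e^{\alpha txz}\,e^{\alpha tyz}$ together with the product rule, and the symmetry from the sign cancellation in $\frac{x-y}{xe^{yz}-ye^{xz}}$ under $x\leftrightarrow y$. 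Your route is shorter once \eqref{equ:gen-CS-cyc} is granted (and the paper does treat it as known), while the paper's route is self-contained at the combinatorial level and makes the bijective content explicit — content you yourself correctly identify in your closing remark, since splitting the exponential is precisely the paper's red/blue coloring of fixed points. Both proofs are valid; no gap.
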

\begin{proof}
By Lemma~\ref{mapping1-properties}, we have 
\begin{equation}
  (n-\exc,\fix,\cyc)\,\s=(n-\asc,\rlmaxdd,\RLmax)\,\theta_1(\s).
\end{equation}
This is  clearly equivalent to~\eqref{exc=asc}. 
We only prove~\eqref{sym1} for $\left\langle \mycom{n}{k}\right\rangle_{\alpha,t}^{\exc}$ using \eqref{equ:exc-a-t-id}. A similar proof for $\left\langle \mycom{n}{k}\right\rangle_{\alpha,t}^{\asc}$ can be given using \eqref{equ:asc-a-t-id}.

\begin{table}[tp]
  $$
  \begin{array}{c|cccc}
\hbox{$n$}\backslash\hbox{$k$}&1&2&3&4\\
\hline
1& \a t&\\
2& \a &\a^2t^2\\
3& \a&\a+3\a^2t&\a^3t^3&\\
4& \a&4\a+3\a^2+4\a^2t&\a+4\a^2t+6\a^3t^2&\a^4t^4\\
\end{array}
 $$
 \caption{The first values of $\left\langle \mycom{n}{k}\right\rangle_{\alpha,t}$  for $1\le k\le 4$.} \label{fig-value}
 \end{table}
Partition the set of fixed points of each permutation in $\S_n$ in two categories, say \emph{blue ones} and \emph{red ones}, and by~\eqref{equ:exc-a-t-id}, we have 
\begin{align}
A_n^{\cyc}(x,y,t(x+y)\,|\, \alpha)&=\sum_{m=0}^n \binom{n}{m} (\a tx)^{n-m}
A^{\cyc}_m(x, y, ty\,|\,\alpha)\nonumber\\
&=
\sum_{m=0}^n \binom{n}{m} (\a t)^{n-m}\sum_{l=1}^{m}
\left\langle {\mycom{m}{l}}\right\rangle^{\exc}_{\alpha,t}x^{n-l} y^{l},\label{binomial-cyc-stirling-eulerian}
\end{align}
where
 $\left\langle \mycom{0}{k}\right\rangle^{\exc}_{\alpha,t}=\left\langle \mycom{k}{0}\right\rangle^{\exc}_{\alpha,t}=\delta_{k,0}$.
 
 It is easy to see that 
$A_n^{\cyc}(x,y,t(x+y)\,|\,\alpha)$  is symmetric in $x$ and $y$ because the 
involution $\vartheta: \s\mapsto \s^{-1}$ for $\s\in \S_n$ satisfies $(\exc, \drop, \fix)\,\s=(\drop, \exc, \fix)\,\s^{-1}$
(This also follows from the $\gamma$-formula in  Theorem~\ref{gamma:P_n}). 
Extracting the coefficients of $x^ay^b$ and $x^by^a$ with $n=a+b$ in 
Eq.~\eqref{binomial-cyc-stirling-eulerian} we obtain Eq.~\eqref{sym1} for 
$\left\langle \mycom{n}{k}\right\rangle^{\exc}_{\alpha,t}$.


\end{proof}
\begin{remark}
  Identity~\eqref{sym1} is an $(\alpha,t)$-analogue of \eqref{cgk}.
    When $t=1$,   by  complement operation $\s\mapsto \s^c$, 
identity~\eqref{sym1} with $\left\langle \mycom{n}{k}\right\rangle_{\alpha,1}^{\asc}$ yields 
Ji and Lin's Theorem 4.1 in~\cite{JL23}.
\end{remark}


\subsection{$\gamma$-positivity of $(\alpha, t)$-Eulerian polynomials}
In this section we study the $\gamma$-coefficients of  the two kinds of $(\alpha, t)$-Eulerian polynomials $\widetilde{A}_n(x,y,t\,|\,\alpha)$ and $A_n(x,y,t\,|\,\alpha)$.
We show that their $\gamma$-coefficients are polynomials in $\N[\a,t]$  by  providing  various  
combinatorial interpretations. We  also compute their exponential generating functions. 
\begin{lemma}\label{lemma-gamma-3-cyc-linear} For any variable $f$ we have 
    \begin{itemize}
    \item[(i)]
    \begin{subequations}
\label{eq:gamma1}
    \begin{align} 
    A_n&(x,y, 0, x+y, f, 1,  t\,|\,\a, \a)\nonumber\\
    &=\sum_{\s\in\S_{n+1}\atop\da(\s)=0}(xy)^{\asc(\s)}(x+y)^{n-2\asc(\s)}f^{\fmax(\s)-1}t^{\rlmaxdd(\s)}\a^{\LRmax(\s)+\RLmax(\s)-2}\label{eq:gamma1-a}\\
    &=\sum_{\s\in\S_n\atop\cda(\s)=0}(xy)^{\exc(\s)}(x+y)^{n-2\exc(\s)}t^{\fix(\s)}\a^{\cyc(\s)}(f+1)^{\cyc(\s)-\fix(\s)}.\label{eq:gamma1b}
    \end{align}
 \end{subequations}
\item[(ii)]
  \begin{subequations} \label{eq:gamma2}
     \begin{align} 
        A_n&(x,y, x+y, 0, f, 1,  t\,|\,\a, \a)\nonumber\\
        &=\sum_{\s\in\S_{n+1}\atop\dd(\s)=0}(xy)^{\des(\s)}(x+y)^{n-2\des(\s)}f^{\fmax(\s)-1}t^{\lrmaxda(\s)}\a^{\LRmax(\s)+\RLmax(\s)-2}\label{eq:gamma2a}\\  
      &=\sum_{\s\in\S_n\atop\cdd(\s)=0}(xy)^{\drop(\s)}(x+y)^{n-2\drop(\s)}t^{\fix(\s)}\a^{\cyc(\s)}(f+1)^{\cyc(\s)-\fix(\s)}.\label{eq:gamma2b}
    \end{align}  
  \end{subequations}  
  \item[(iii)]
  \begin{subequations}\label{eq:gamma3}
      \begin{align}
   & A_n\biggl(1,xy, \frac{x+y}{2},
    \frac{x+y}{2}, f, 1,
    t\,|\,\a,\a \biggr)\nonumber\\
        &=\sum_{\s\in\S_{n+1}}(xy)^{\val(\s)}\bigg(\frac{x+y}{2}\bigg)^{n-2\val(\s)}f^{\fmax(\s)-1}t^{\lrmaxda(\s)+\rlmaxdd(\s)}\a^{\LRmax(\s)+\RLmax(\s)-2}\label{eq:gamma3a}\\  
      &=\sum_{\s\in\S_n}(xy)^{\cpk(\s)}\bigg(\frac{x+y}{2}\bigg)^{n-2\cpk(\s)}(2t)^{\fix(\s)}\a^{\cyc(\s)}(f+1)^{\cyc(\s)-\fix(\s)}.\label{eq:gamma3b}
    \end{align}
  \end{subequations} 
   \end{itemize}
\end{lemma}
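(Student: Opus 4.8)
The plan is to treat the three parts uniformly. For each part the first equality (the $(a)$-form) is obtained by unwinding the definition~\eqref{refine-JI2} after substituting the prescribed values of $(u_1,u_2,u_3,u_4,f,g,\a,\be)$, while the second equality (the $(b)$-form) is obtained by feeding the same substitution through Theorem~\ref{master2} and then re-expanding the resulting cycle polynomial via Theorem~\ref{master1} with a suitably chosen splitting of $x+y$. A useful preliminary observation is that in all three cases $\a=\be$, $g=1$, and $\a u_3+\be u_4=\a(x+y)$, $\a f+\be g=\a(f+1)$, so Theorem~\ref{master2} sends each left-hand side to the single cycle polynomial
\[
A_n^{\cyc}\!\left(x,y,\tfrac{(x+y)t}{f+1}\,\Big|\,\a(f+1)\right).
\]
Thus the three left-hand sides are in fact the same polynomial, and the three $(b)$-expansions are merely three different combinatorial readings of it.

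For the $(a)$-equalities I would substitute directly into~\eqref{refine-JI2}. In part~(i) the factor $u_3^{\lda(\s)}=0^{\da(\s)}$ restricts the sum to permutations with $\da(\s)=0$; for such $\s$ one has $\lrmaxda(\s)=0$, so $t^{\lrmaxda+\rlmaxdd}=t^{\rlmaxdd}$, and by~\eqref{relation:V and W} $\asc=\val$. The boundary ``$0$-$0$'' classification gives $\val+\pk+\da+\dd=n+1$ with $\pk=\val+1$, hence $2\val+\da+\dd=n$; with $\da=0$ this yields $\rdd=\dd=n-2\val=n-2\asc$, converting $(xy)^{\val}(x+y)^{\rdd}$ into $(xy)^{\asc}(x+y)^{n-2\asc}$, which is~\eqref{eq:gamma1-a}. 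Part~(ii) is the mirror image ($u_4=0$ forces $\dd=0$, hence $\rlmaxdd=0$ and $\des=\val$), and part~(iii) uses $u_3=u_4=\tfrac{x+y}{2}$ together with $\da+\dd=n-2\val$ to collapse $u_3^{\da}u_4^{\dd}$ into $\big(\tfrac{x+y}{2}\big)^{n-2\val}$.

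For the $(b)$-equalities I would apply Theorem~\ref{master1} to the common cycle polynomial above, choosing the cyclic splitting to mirror the part: $u_3=0,\,u_4=x+y$ in~(i), $u_3=x+y,\,u_4=0$ in~(ii), and $u_3=u_4=\tfrac{x+y}{2}$ in~(iii). In~(i) the factor $0^{\cda(\s)}$ restricts to $\cda(\s)=0$; since $\exc=\cval+\cda$, $\drop=\cpk+\cdd$, and $\cpk=\cval$ follow straight from the definitions, one gets $\exc=\cpk$, and the cyclic count $2\cpk+\cda+\cdd+\fix=n$ gives $\cdd=n-2\exc-\fix$. Substituting $T=\tfrac{(x+y)t}{f+1}$ and $A=\a(f+1)$ into $(xy)^{\cpk}(x+y)^{\cdd}T^{\fix}A^{\cyc}$ and collecting factors produces $(xy)^{\exc}(x+y)^{n-2\exc}t^{\fix}\a^{\cyc}(f+1)^{\cyc-\fix}$, i.e.~\eqref{eq:gamma1b}; part~(ii) is identical with $\drop$ in place of $\exc$. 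The one place that needs care is the arithmetic of powers of $2$ in part~(iii): there $u_3^{\cda}u_4^{\cdd}=\big(\tfrac{x+y}{2}\big)^{n-2\cpk-\fix}$, and matching this against the claimed $\big(\tfrac{x+y}{2}\big)^{n-2\cpk}(2t)^{\fix}$ requires the identity $\big(\tfrac{x+y}{2}\big)^{n-2\cpk-\fix}(x+y)^{\fix}=\big(\tfrac{x+y}{2}\big)^{n-2\cpk}2^{\fix}$, after which $2^{\fix}$ combines with $t^{\fix}$ into $(2t)^{\fix}$ and $A^{\cyc}/(f+1)^{\fix}=\a^{\cyc}(f+1)^{\cyc-\fix}$ falls out. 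This small power-of-two bookkeeping, rather than any conceptual difficulty, is the only real obstacle.
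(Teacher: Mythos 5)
Your proposal is correct and follows essentially the same route as the paper's own proof: both specialize Theorem~\ref{master2} with $\a=\be$, $g=1$ to identify each left-hand side with the single cycle polynomial $A_n^{\cyc}\bigl(x,y,\tfrac{(x+y)t}{f+1}\,\big|\,\a(f+1)\bigr)$, then obtain the $(a)$-forms by unwinding definition~\eqref{refine-JI2} and the $(b)$-forms by re-expanding via Theorem~\ref{master1} with the matching splitting of $x+y$, using the statistic identities \eqref{relation:cyc-linear-statistics}. Your explicit power-of-two bookkeeping in part~(iii), via $\bigl(\tfrac{x+y}{2}\bigr)^{-\fix}(x+y)^{\fix}=2^{\fix}$, is exactly the computation the paper performs implicitly.
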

\begin{proof}
First of all, by definition (see Section~1), it is easy to verify the following identities: 
\begin{subequations}\label{relation:cyc-linear-statistics}
\begin{align}
\val=\pk-1,\, \asc=\val+\lda,\, \des=\val  +\rdd;\\
\cval=\cpk,\,\exc=\cval+\cda,\, \drop=\cpk+\cdd;\\
\fix+\cdd+\cda=n-\cpk-\cval=n-2\cpk.
 \end{align}  
\end{subequations}
When $\a=\beta$ and $g=1$,  Theorem~\ref{master2} with $xy=u_1u_2$ and $x+y=u_3+u_4$ becomes
     \begin{equation}\label{equ:f-1-t}
             A_n(u_1,u_2, u_3, u_4, f, 1,  t\,|\,\a, \a)=A_n^{\cyc}\left( x,y, \frac{u_3+u_4}{f+1}\,t\,|\,  \a f+\a\right).
        \end{equation}
\begin{itemize}
    \item[(i)]       Let $\uu=(x,y,0,x+y)$ in Eq.~\eqref{equ:f-1-t}, then the
    left-hand side becomes $A_n(x,y, 0, x+y, f, 1,  t\,|\,\a, \a)$, which is  equal to  \eqref{eq:gamma1-a}  by definition~\eqref{refine-JI2}~and~\eqref{relation:cyc-linear-statistics}, while the right-hand side of Eq.~\eqref{equ:f-1-t} becomes~\eqref{eq:gamma1b} by Theorem~\ref{master1}.
    
     \item[(ii)] Let $\uu=(x,y,x+y,0)$ in Eq.~\eqref{equ:f-1-t}, similarly, we prove Eq.~\eqref{eq:gamma2a} and \eqref{eq:gamma2b}. 
    
    \item[(iii)] Let  $\uu=(1,xy, \frac{x+y}{2},
    \frac{x+y}{2})$ in Eq.~\eqref{equ:f-1-t}, then the left-hand side becomes $$
    A_n\left(1,xy, \frac{x+y}{2},
    \frac{x+y}{2}, f, 1,
    t\,|\,\a,\a\right),$$ which is equal to 
    \eqref{eq:gamma3a} by definition~\eqref{refine-JI2} and~\eqref{relation:cyc-linear-statistics}, while the right-hand side of \eqref{equ:f-1-t} becomes~\eqref{eq:gamma3b} by Theorem~\ref{master1}.
    
\end{itemize}


\end{proof}

For convenience of description of the $\g$-coefficients, we shall define three types of subsets of $\S_n$ with respect to cyclic statistics and linear statistics of permutations, respectively.  

 The cycle type subsets of $\S_n$:
\begin{subequations}\label{cycle-subset-cda-cdd-cpk}
 \begin{align}
&  \S^{\cda=0}_{n, \exc=j}:=\{\s\in \S_n:\cda(\s)=0\,\, {\rm and}\,\,  \exc(\s)=j\};\label{cda=0}\\  
& \S^{\cdd=0}_{n, \drop=j}:=\{\s\in \S_n: \cdd(\s)=0\,\, {\rm and}\,\, \drop(\s)=j\};\label{cdd=0}\\
&\S_{n}^{\cpk=j}:=\{\s\in\S_n: \cpk(\s)=j\}.\qquad\label{cpk}
\end{align}
 \end{subequations}
 
 The linear counterpart  subsets of $\S_n$:
 \begin{subequations}\label{linear-subset-da-dd-W}
\begin{align}
&   \S^{\da=0}_{n,\asc=j}:=\{\s\in \S_n: \da(\s)=0\,\, {\rm and}\,\,  \asc(\s)=j\};\label{da=0}\\
& \S^{\dd=0}_{n,\des=j}:=\{\s\in \S_n: \dd(\s)=0\,\, {\rm and}\,\, \des(\s)=j\};\label{dd=0}\\
&\S^{\pk=j}_{n}:=\{\s\in\S_n: \pk(\s)=j\};\label{peak-PRW}
\end{align}
 \end{subequations}
 and the subsets of $\PRW_n$:
 \begin{subequations}\label{linear-subset-PRW-da-dd-W}
\begin{align}
&   \PRW^{\da=0}_{n,\asc=j}:=\{\s\in \PRW_n: \da(\s)=0\,\, {\rm and}\,\,  \asc(\s)=j\};\label{da=0-PRW}\\
& \PRW^{\dd=0}_{n,\des=j}:=\{\s\in \PRW_n: \dd(\s)=0\,\, {\rm and}\,\, \des(\s)=j\};\label{dd=0-PRW}\\
&\PRW^{\pk=j}_{n}:=\{\s\in\PRW_n: \pk(\s)=j\}.\label{W}
\end{align}
 \end{subequations}

From Lemma~\ref{lemma-gamma-3-cyc-linear}, we derive the following combinatorial interpretations of the coefficients in the $\gamma$-expansion of 
$\widetilde{A}_n(x,y,t\,|\,\alpha)=A_n^{\cyc}( x,y,(x+y)t\,|\,\alpha)$ based on linear and cyclic statistics.

\begin{theorem}\label{gamma:P_n}
For $n\ge 1$, we have
\begin{align}
\widetilde{A}_n(x,y,t\,|\,\alpha)
&=\sum_{j=0}^{\lfloor \frac{n}{2}\rfloor}\widetilde{\gamma}_{n,j}(\a,t)(xy)^j(x+y)^{n-2j},\label{equ:gamma:P_n}
\end{align}
where 
\begin{subequations}\label{gamma1}
\begin{align}
\widetilde{\gamma}_{n,j}(\a,t)&=\sum_{\PRW^{\da=0}_{n+1,\asc=j}}\a^{\RLmax(\s)-1}t^{\rlmaxdd(\s)}\label{gamma1a}\\
&=\sum_{\sigma\in  \S^{\cda=0}_{n, \exc=j}}\alpha^{\cyc\,(\sigma)}t^{\fix(\sigma)};\label{gamma1b}
\end{align}
\end{subequations}
\begin{subequations}\label{gamma2}
\begin{align}
\widetilde{\gamma}_{n,j}(\a,t)&=\sum_{\PRW^{\dd=0}_{n+1,\des=j}}\a^{\LRmax(\s)+\RLmax(\s)-2}t^{\lrmaxda(\s)}\label{gamma2a}\\
&=\sum_{\sigma\in  \S^{\cdd=0}_{n, \drop=j}}\alpha^{\cyc\,(\sigma)}t^{\fix(\sigma)};\label{gamma2b}
\end{align}
\end{subequations}
\begin{subequations}\label{gamma3}
\begin{align}
\widetilde{\gamma}_{n,j}(\a,t)&=2^{2j-n}\sum_{\PRW_{n+1}^{\pk=j}}\a^{\LRmax(\s)+\RLmax(\s)-2}t^{\lrmaxda(\s)+\rlmaxdd(\s)}\label{gamma3a}\\
&=2^{2j-n}\sum_{\s\in\S_{n}^{\cpk=j}}\a^{\cyc(\s)}(2t)^{\fix(\s)}.\label{gamma3b}
\end{align}
\end{subequations}
\end{theorem}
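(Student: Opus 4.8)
The plan is to read off Theorem~\ref{gamma:P_n} as the single specialization $f=0$ of Lemma~\ref{lemma-gamma-3-cyc-linear}, after identifying the left-hand sides with $\widetilde{A}_n(x,y,t\,|\,\alpha)$ via \eqref{cyc-lin1}. The starting observation is that, by \eqref{equ:f-1-t} (Theorem~\ref{master2} with $\alpha=\beta$ and $g=1$) evaluated at $f=0$,
\[
A_n(u_1,u_2,u_3,u_4,0,1,t\,|\,\alpha,\alpha)=A_n^{\cyc}\!\left(x,y,(u_3+u_4)t\,|\,\alpha\right)=\widetilde{A}_n(x,y,t\,|\,\alpha),
\]
the second equality using $u_3+u_4=x+y$ and \eqref{cyc-lin1}. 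The key point is that this value is insensitive to how $x+y$ is split as $u_3+u_4$; hence the three choices $(u_3,u_4)=(0,x+y)$, $(x+y,0)$, $(\tfrac{x+y}{2},\tfrac{x+y}{2})$ used in parts (i), (ii), (iii) of the lemma all produce the same left-hand side $\widetilde{A}_n(x,y,t\,|\,\alpha)$. Consequently, setting $f=0$ turns each of the three identities of Lemma~\ref{lemma-gamma-3-cyc-linear} into two expansions of $\widetilde{A}_n(x,y,t\,|\,\alpha)$ in the basis $(xy)^j(x+y)^{n-2j}$, i.e.\ \eqref{equ:gamma:P_n}, whose coefficients give the six asserted formulas for $\widetilde{\gamma}_{n,j}(\alpha,t)$.

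For the cyclic interpretations I would put $f=0$ directly in \eqref{eq:gamma1b}, \eqref{eq:gamma2b}, \eqref{eq:gamma3b}. There the factor $(f+1)^{\cyc(\s)-\fix(\s)}$ becomes $1$, and sorting the remaining sums by $\exc(\s)=j$, $\drop(\s)=j$, and $\cpk(\s)=j$ over the sets \eqref{cda=0}, \eqref{cdd=0}, \eqref{cpk} yields \eqref{gamma1b}, \eqref{gamma2b}, \eqref{gamma3b}; in part (iii) the identity $(\tfrac{x+y}{2})^{n-2\cpk(\s)}=2^{2\cpk(\s)-n}(x+y)^{n-2\cpk(\s)}$ produces the normalisation $2^{2j-n}$ and the weight $(2t)^{\fix(\s)}$.

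For the linear interpretations I would put $f=0$ in \eqref{eq:gamma1-a}, \eqref{eq:gamma2a}, \eqref{eq:gamma3a}. The one structural mechanism is that $0^{\fmax(\s)-1}$ is the indicator of $\fmax(\s)=1$, so every sum collapses from $\S_{n+1}$ to $\PRW_{n+1}$; sorting by $\asc(\s)=j$, $\des(\s)=j$, and $\val(\s)=j$ then gives \eqref{gamma1a}, \eqref{gamma2a}, and---using $\val=\pk-1$, so that $\val(\s)=j$ reads as $\pk(\s)=j+1$, with $2^{2\val(\s)-n}$ supplying $2^{2j-n}$---the peak formula \eqref{gamma3a}. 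The only extra input is needed for \eqref{gamma1a}: for $\s\in\PRW_{n+1}$ the letters preceding $n+1$ form an increasing run whose interior positions are all double ascents, so $\da(\s)=0$ forces this run to have length one, i.e.\ $\s_1=n+1$; then $\LRmax(\s)=1$ and the exponent $\a^{\LRmax(\s)+\RLmax(\s)-2}$ collapses to $\a^{\RLmax(\s)-1}$, exactly as in \eqref{gamma1a}. No analogous collapse is forced by $\dd(\s)=0$ in part (ii), which is why the full exponent $\a^{\LRmax(\s)+\RLmax(\s)-2}$ survives in \eqref{gamma2a}.

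All of the analytic substance is already carried by Lemma~\ref{lemma-gamma-3-cyc-linear}, which itself rests on Theorems~\ref{master1} and~\ref{master2}, so I anticipate no genuine obstacle: the proof is a disciplined specialization followed by coefficient extraction. The two spots that demand care are the combinatorial observation that $\da(\s)=0$ pins $\s_1=n+1$ within $\PRW_{n+1}$ (responsible for the reduced power $\a^{\RLmax(\s)-1}$ in \eqref{gamma1a}, in contrast to the full power retained in \eqref{gamma2a}), and the valley-to-peak reindexing together with the tracking of the powers of $2$ in part (iii).
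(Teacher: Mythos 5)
Your proposal is correct and follows the paper's own proof essentially verbatim: both combine \eqref{cyc-lin1} with the $f=0$ specialization \eqref{equ:f-1-t} of Theorem~\ref{master2}, feed the three choices $(u_3,u_4)=(0,x+y)$, $(x+y,0)$, $(\tfrac{x+y}{2},\tfrac{x+y}{2})$ into Lemma~\ref{lemma-gamma-3-cyc-linear}, extract coefficients of $(xy)^j(x+y)^{n-2j}$, and rest on the same key observation that $\da(\s)=0$ forces $\s_1=n+1$ (hence $\LRmax(\s)=1$) for $\s\in\PRW_{n+1}$ --- a fact the paper states without proof and you justify correctly. One useful by-product: your reindexing via $\val=\pk-1$ yields $\pk(\s)=j+1$ in the peak interpretation, consistent with \eqref{gamma-linear3a} of Theorem~\ref{coro: gamma-expansion-A_n(x,y|a)-cyc-version}, which shows that the superscript $\pk=j$ printed in \eqref{gamma3a} is a typo for $\pk=j+1$ (sanity check at $n=1$, $j=0$: $\widetilde{\gamma}_{1,0}=\a t$ comes from summing over $\PRW_2$ with $\pk=1$, not $\pk=0$).
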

\begin{proof}
Let $u_1u_2=xy$ and $u_3+u_4=x+y$,  combining \eqref{cyc-lin1} and \eqref{equ:f-1-t}, we obtain
\begin{align}\label{key1}
\widetilde{A}_n(x,y,t\,|\,\alpha)= A_n^{\cyc}( x,y,(x+y)t\,|\,\alpha)=A_n(u_1,u_2, u_3, u_4, 0, 1,  t\,|\,\a, \a).
\end{align} 
Combining \eqref{key1} with $u_3=0$ (resp. $u_4=0$) and 
\eqref{eq:gamma1} (resp. \eqref{eq:gamma2}) with $f=0$,  we obtain the twins interpretations~\eqref{gamma1}  (resp. \eqref{gamma2}). Note that we obtain 
\eqref{gamma1a} by using the fact that  
if  $\s\in\PRW_n$ and  $\da(\s)=0$, then  $\LRmax(\s)=1$.

Similarly,  let $u_1=1$, $u_2=xy$ and $u_3=u_4=\frac{x+y}{2}$ in \eqref{key1},   we obtain
\begin{align}\label{key2}
  \widetilde{A}_n(x,y,t\,|\,\alpha)= A_n^{\cyc}( x,y,(x+y)t\,|\,\alpha)=A_n\biggl(1,xy, \frac{x+y}{2},
    \frac{x+y}{2}, 0, 1,
    t\,|\,\a,\a \biggr).
\end{align} 
Combining \eqref{key2} and \eqref{eq:gamma3}, we obtain the twins interpretations~\eqref{gamma3}.
\end{proof}
\begin{remark}
\begin{enumerate}
\item
 Previously, 
Shin-Zeng~\cite[Theorem~11]{SZ12} proved the $t=0$ case of \eqref{gamma1b}, i.e.,
\begin{equation}\label{sz12}
A_{n}^{\cyc}( x,y,0\,|\,\alpha)=
\sum_{\s\in \DD^*_n}
\alpha^{\cyc(\s)} (xy)^{\exc(\s)} (x+y)^{n-2\,\exc(\s)},
\end{equation}
where $\DD^*_{n}=\{\sigma\in \S_{n}: \cda(\s)=0,\; \fix(\s)=0\}$.

In fact, it is easy to  derive  the general case \eqref{gamma1b} from \eqref{sz12} 
 as in the following. Counting the permutations in $\S_n$ by the number of fixed points and 
 plugging \eqref{sz12}, we obtain 
\begin{align}
A_n^{\cyc}( x,y,t(x+y)\,|\,\alpha)&=\sum_{k=0}^n \binom{n}{k}
(\a t(x+y))^k A_{n-k}^{\cyc}( x,y,0\,|\,\alpha)\nonumber\\
&=\sum_{k=0}^n \binom{n}{k}
(\a t)^k \sum_{\s\in \DD^*_{n-k}}\alpha^{\cyc(\s)} (xy)^{\exc(\s)}(x+y)^{n-2\exc(\s)}\nonumber\\
&= \sum_{j=0}^{\lfloor n/2\rfloor}\biggl(\sum_{\s\in\S^{\cda=0}_{n,\exc=j}}\alpha^{\cyc(\s)}t^{\fix(\s)} \biggr)
(xy)^j (x+y)^{n-2j}.\label{SZ-bis}
\end{align}
\item An equivalent form of  \eqref{SZ-bis}
appeared in  \cite[Theorem 3.4]{M2Y224} and \cite[Theorem~7]{PS24}. One can also derive this result  from \cite[Theorem~6]{CJZ20}.
\item When $t=1$, by  complement operation $\s\mapsto \s^c$, the three combinatorial interpretations~\eqref{gamma1a}, \eqref{gamma2a} and \eqref{gamma3a} yield  the  three combinatorial interpretations of the $\gamma$-coefficients in~\cite[Theorem 2.1]{JL23}.

\end{enumerate}
\end{remark}

The $\gamma$-expansions of 
the first four  polynomials $\widetilde{A}_n(x,y,t\,|\,\alpha)$ are as follows:
\begin{align*}
&\widetilde{A}_1(x,y,t\,|\,\alpha)=\a t (x+y),\\
&\widetilde{A}_2(x,y,t\,|\,\alpha)=\a^2t^2(x+y)^2+\a xy,\\
&\widetilde{A}_3(x,y,t\,|\,\alpha)=\a^3t^3(x+y)^3+(\a+3\a^2)t xy(x+y),\\
&\widetilde{A}_4(x,y,t\,|\,\alpha)=\a^4t^4(x+y)^4+(6\a^3+4\a^2+\a)t^2xy(x+y)^2+(3\a^2+2\a)(xy)^2.
\end{align*}


From Lemma~\ref{lemma-gamma-3-cyc-linear},  we obtain the following  combinatorial interpretations for the $\gamma$-coefficients
 of $A_n(x,y,t\,|\, \a)=A_n^{\cyc}\left(x,y,\frac{x+y}{2}t\,|\, 2\a\right)$ based on linear and cyclic statistics.
\begin{theorem}
    \label{coro: gamma-expansion-A_n(x,y|a)-cyc-version}
For $n\ge 0$ and $0\leq j\leq \lfloor{n/2}\rfloor$, we have

\begin{equation}\label{gamma:A}
A_n(x,y,t\,|\, \a)=\sum_{j=0}^{\lfloor n/2\rfloor}\gamma_{n,j}(\a,t)(xy)^j(x+y)^{n-2j},
\end{equation}
where 
\begin{subequations}\label{gamma-linear1}
 \begin{align}
\gamma_{n,j}(\a,t)&=\sum_{\s\in\S^{\da=0}_{n+1,\asc=j}}\a^{\LRmax(\s)+\RLmax(\s)-2}t^{\rlmaxdd(\s)}\label{gamma-linear1a}\\
&=\sum_{\s\in\S^{\cda=0}_{n,\exc=j}}2^{\cyc(\s)-\fix(\s)}\a^{\cyc(\s)}t^{\fix(\s)};\label{gamma-linear1b}
\end{align}
\end{subequations}
\begin{subequations}\label{gamma-linear2}
\begin{align}
\gamma_{n,j}(\a,t)&=\sum_{\s\in\S^{\dd=0}_{n+1,\des=j}}\a^{\LRmax(\s)+\RLmax(\s)-2}t^{\lrmaxda(\s)}\label{gamma-linear2a}\\
&=\sum_{\s\in\S^{\cdd=0}_{n,\drop=j}}2^{\cyc(\s)-\fix(\s)}\a^{\cyc(\s)}t^{\fix(\s)};\label{gamma-linear2b}
\end{align}
\end{subequations}
\begin{subequations}\label{gamma-linear3}
\begin{align}
\gamma_{n,j}(\a,t)&=2^{2j-n}\sum_{\s\in\S^{\pk=j+1}_{n+1}}\a^{\LRmax(\s)+\RLmax(\s)-2}t^{\lrmaxda(\s)+\rlmaxdd(\s)}\label{gamma-linear3a}\\
&=2^{2j-n}\sum_{\s\in\S^{\cpk=j}_{n}}(2\a)^{\cyc(\s)}t^{\fix(\s)}.\label{gamma-linear3b}
\end{align}
\end{subequations}
\end{theorem}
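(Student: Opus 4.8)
The plan is to derive Theorem~\ref{coro: gamma-expansion-A_n(x,y|a)-cyc-version} as a direct specialization of Lemma~\ref{lemma-gamma-3-cyc-linear}, exactly parallel to how Theorem~\ref{gamma:P_n} was obtained, but now setting $f=1$ rather than $f=0$. The starting point is the identity \eqref{An=Ancyc}, namely $A_n(x,y,t\,|\,\a)=A_n^{\cyc}(x,y,\frac{x+y}{2}t\,|\,2\a)$. First I would feed this into the right-hand side of \eqref{equ:f-1-t}: taking $\a=\beta$, $f=g=1$ there yields $A_n(u_1,u_2,u_3,u_4,1,1,t\,|\,\a,\a)=A_n^{\cyc}(x,y,\frac{u_3+u_4}{2}t\,|\,2\a)$, and with $u_3+u_4=x+y$ the right-hand side is precisely $A_n^{\cyc}(x,y,\frac{x+y}{2}t\,|\,2\a)=A_n(x,y,t\,|\,\a)$. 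This gives the master identity
\begin{equation*}
A_n(x,y,t\,|\,\a)=A_n(u_1,u_2,u_3,u_4,1,1,t\,|\,\a,\a)
\end{equation*}
for any $(u_1,u_2,u_3,u_4)$ with $u_1u_2=xy$ and $u_3+u_4=x+y$, which is the $f=1$ analogue of \eqref{key1} and \eqref{key2}.

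Next I would specialize the four $u_i$'s in three ways and invoke the three parts of Lemma~\ref{lemma-gamma-3-cyc-linear} with $f=1$. For part (i), set $\uu=(x,y,0,x+y)$: by \eqref{eq:gamma1-a} the left side becomes the sum over $\s\in\S_{n+1}$ with $\da(\s)=0$, and collecting by $\asc(\s)=j$ gives the coefficient \eqref{gamma-linear1a} (here $f=1$ makes the factor $f^{\fmax-1}$ disappear, leaving just $\a^{\LRmax+\RLmax-2}t^{\rlmaxdd}$); meanwhile \eqref{eq:gamma1b} with $f=1$ produces $(f+1)^{\cyc-\fix}=2^{\cyc-\fix}$, yielding the cycle interpretation \eqref{gamma-linear1b}. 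For part (ii), set $\uu=(x,y,x+y,0)$ and repeat verbatim using \eqref{eq:gamma2} to obtain \eqref{gamma-linear2a} and \eqref{gamma-linear2b}. For part (iii), set $\uu=(1,xy,\frac{x+y}{2},\frac{x+y}{2})$ and use \eqref{eq:gamma3}: here the factor $(\frac{x+y}{2})^{n-2\val}$ accounts for the $2^{2j-n}$ prefactor after writing $\val=\pk-1$ (so $\pk=j+1$ on $\S_{n+1}$), giving \eqref{gamma-linear3a}, while \eqref{eq:gamma3b} with $f=1$ gives the $(2\a)^{\cyc}t^{\fix}$ weight of \eqref{gamma-linear3b} (the extra factor $2^{\fix}$ from $(2t)^{\fix}$ recombines with $2^{\cyc-\fix}$ into $2^{\cyc}$, i.e.\ $(2\a)^{\cyc}$). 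In each case the left-hand side equals $A_n(x,y,t\,|\,\a)$ by the master identity, so all three right-hand sides agree and equal the $\gamma$-expansion \eqref{gamma:A}.

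The only genuine bookkeeping, and the step most prone to error, is the index and power-of-two matching in part (iii): one must track that the peak count shifts from $\val=j$ to $\pk=j+1$ on $\S_{n+1}$, and that the normalization $2^{2j-n}$ together with $(2t)^{\fix}$ reproduces exactly $(2\a)^{\cyc}t^{\fix}$ rather than $2^{\cyc-\fix}\a^{\cyc}(2t)^{\fix}$. I would verify this by substituting the relations \eqref{relation:cyc-linear-statistics}, in particular $\fix+\cdd+\cda=n-2\cpk$, to confirm the exponent of $2$ collapses correctly. Everything else is a transcription of Lemma~\ref{lemma-gamma-3-cyc-linear} at $f=1$, and the symmetry of $A_n(x,y,t\,|\,\a)$ in $x,y$ (already noted via $\s\mapsto\s^{-1}$) guarantees the expression is a legitimate $\gamma$-expansion in $(xy)^j(x+y)^{n-2j}$ with coefficients in $\N[\a,t]$.
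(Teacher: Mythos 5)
Your proposal is correct and takes essentially the same route as the paper's own proof: the paper likewise combines \eqref{An=Ancyc} with \eqref{equ:f-1-t} (at $f=g=1$, $\a=\beta$) to get the identity $A_n(x,y,t\,|\,\a)=A_n(u_1,u_2,u_3,u_4,1,1,t\,|\,\a,\a)$, then specializes $\uu=(x,y,0,x+y)$, $\uu=(x,y,x+y,0)$, and $\uu=(1,xy,\frac{x+y}{2},\frac{x+y}{2})$ against the three parts of Lemma~\ref{lemma-gamma-3-cyc-linear}. Your bookkeeping in part (iii) — the shift $\val=\pk-1$ and the recombination of $(2t)^{\fix}$ with $(f+1)^{\cyc-\fix}=2^{\cyc-\fix}$ into $(2\a)^{\cyc}t^{\fix}$ — is exactly what \eqref{eq:gamma3} encodes, so no gap remains.
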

 \begin{proof}
Let $f=1$, $u_1u_2=xy$ and $u_3+u_4=x+y$
from \eqref{An=Ancyc} and \eqref{equ:f-1-t}   we derive 
\begin{equation}\label{key3}
A_n(x,y,t\,|\,\alpha)=A_n^{\cyc}\left(x,y,\frac{x+y}{2}t\,|\, 2\a\right)=A_n(u_1,u_2, u_3, u_4, 1, 1,  t\,|\,\a, \a).
\end{equation} 

Combining \eqref{key3} with $u_3=0$ (resp. $u_4=0$) and 
\eqref{eq:gamma1} (resp. \eqref{eq:gamma2}) with $f=1$,  we obtain the twins interpretations~\eqref{gamma-linear1}  (resp. \eqref{gamma-linear2}).

Similarly,  let $u_1=1$, $u_2=xy$ and $u_3=u_4=\frac{x+y}{2}$ in \eqref{key3},  we obtain
\begin{equation}\label{key4}
A_n(x,y,t\,|\,\alpha)=A_n^{\cyc}\left(x,y,\frac{x+y}{2}t\,|\, 2\a\right)=A_n\biggl(1,xy, \frac{x+y}{2},
    \frac{x+y}{2}, 1, 1,
    t\,|\,\a,\a \biggr).
\end{equation} 
Combining \eqref{key4} and \eqref{eq:gamma3}, we obtain the twins interpretations~\eqref{gamma-linear3}.
\end{proof}

\begin{table}[t]
  $$
  \begin{array}{c|cccc}
\hbox{$n$}\backslash\hbox{$j$}&0&1&2\\
\hline
0& 1&\\
1& \a t&\\
2& \a^2t^2&2\a&&\\
3& \a^3t^3&2\a+6\a^2 t&&\\
4& \a^4t^4&2\a+8\a^2t+12\a^3t^2&4\a+12\a^2&\\
5& \a^5t^5&2\a+10\a^2t+20\a^3t^2+20\a^4t^3&16\a+40\a^2+20\a^2t+60\a^3t
\end{array}
 $$
 \caption{The first values of $\gamma_{n,j}(\a,t)$ for $0\le 2j<n\le 5$.} \label{table2}
 \end{table}
The first values of $\gamma_{n,j}(\a,t)$ are given in Table~\ref{table2}. 
 The following result provides   a $t$-analogue of Ji's Theorem~1.9~\cite{Ji23}.
\begin{theorem}For $n\ge 0$, we have
\begin{subequations}
\begin{equation}
A_n\bigg(x,y,t\,|\,\frac{\a+\be}{2}\bigg)
= \sum_{j=0}^{\lfloor n/2\rfloor}\gamma_{n,j}\left(\frac{\a+\be}{2}, t\right)(xy)^j(x+y)^{n-2j},
\end{equation}
where
\begin{align}
\gamma_{n,j}\left(\frac{\a+\be}{2}, t\right)
=2^{2j-n}\sum_{\s\in\S_{n+1}^{\pk=j+1}}\a^{\LRmax(\s)-1}\be^{\RLmax(\s)-1}t^{\lrmaxda(\s)+\rlmaxdd(\s)}.\label{gamma-peak-1}
\end{align}
\end{subequations}
\end{theorem}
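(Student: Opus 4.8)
The plan is to prove the first displayed identity as a direct specialization and then to reprove the combinatorial formula \eqref{gamma-peak-1} from scratch, keeping $\a$ and $\be$ distinct. The first equation is nothing but \eqref{gamma:A} from Theorem~\ref{coro: gamma-expansion-A_n(x,y|a)-cyc-version} evaluated at $\alpha=\tfrac{\a+\be}{2}$, so no work is needed there. For \eqref{gamma-peak-1} the tempting route is to substitute $\alpha\to\tfrac{\a+\be}{2}$ into the single-parameter peak interpretation \eqref{gamma-linear3a}; but that only produces $\bigl(\tfrac{\a+\be}{2}\bigr)^{\LRmax(\s)+\RLmax(\s)-2}$ in the summand, not the sought $\a^{\LRmax(\s)-1}\be^{\RLmax(\s)-1}$. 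Instead I would return to Theorem~\ref{master2} with $\a\neq\be$ and specialize at $\uu=(1,xy,\tfrac{x+y}{2},\tfrac{x+y}{2})$, $f=g=1$. Since then $u_1u_2=xy$, $u_3+u_4=x+y$, $\a u_3+\be u_4=\tfrac{x+y}{2}(\a+\be)$ and $\a f+\be g=\a+\be$, the right-hand side collapses to $A_n^{\cyc}\bigl(x,y,\tfrac{x+y}{2}t\,|\,\a+\be\bigr)$, which by \eqref{An=Ancyc} is exactly $A_n\bigl(x,y,t\,|\,\tfrac{\a+\be}{2}\bigr)$. This yields the key identity
\[
A_n\Bigl(x,y,t\,\Big|\,\tfrac{\a+\be}{2}\Bigr)=A_n\Bigl(1,xy,\tfrac{x+y}{2},\tfrac{x+y}{2},1,1,t\,\Big|\,\a,\be\Bigr).
\]

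Next I would expand the right-hand side by its definition \eqref{refine-JI2}. With $f=g=1$ the factors $f^{\fmax(\s)-1}$ and $g^{\bmax(\s)-1}$ vanish, so the summand is $(xy)^{\val(\s)}\bigl(\tfrac{x+y}{2}\bigr)^{\lda(\s)+\rdd(\s)}t^{\lrmaxda(\s)+\rlmaxdd(\s)}\a^{\LRmax(\s)-1}\be^{\RLmax(\s)-1}$. Using $\pk+\val+\lda+\rdd=n+1$ over $\S_{n+1}$ together with $\val=\pk-1$, so that $\lda+\rdd=n-2\val$, and grouping the sum over $\S_{n+1}$ according to $\val(\s)=j$ (equivalently $\pk(\s)=j+1$), I would obtain, after writing $\bigl(\tfrac{x+y}{2}\bigr)^{n-2j}=2^{2j-n}(x+y)^{n-2j}$,
\[
A_n\Bigl(x,y,t\,\Big|\,\tfrac{\a+\be}{2}\Bigr)=\sum_{j=0}^{\lfloor n/2\rfloor}\Bigl[2^{2j-n}\sum_{\s\in\S_{n+1}^{\pk=j+1}}\a^{\LRmax(\s)-1}\be^{\RLmax(\s)-1}t^{\lrmaxda(\s)+\rlmaxdd(\s)}\Bigr](xy)^j(x+y)^{n-2j}.
\]

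Finally I would compare this with the $\gamma$-expansion $A_n(x,y,t\,|\,\tfrac{\a+\be}{2})=\sum_j\gamma_{n,j}\bigl(\tfrac{\a+\be}{2},t\bigr)(xy)^j(x+y)^{n-2j}$ from the first part of the statement and read off the coefficient of $(xy)^j(x+y)^{n-2j}$ to conclude \eqref{gamma-peak-1}. The only step requiring care is the legitimacy of this coefficient extraction, which rests on the standard fact that $\{(xy)^j(x+y)^{n-2j}\}_{0\le j\le\lfloor n/2\rfloor}$ is a basis of the symmetric homogeneous polynomials of degree $n$ in $x,y$; with that in hand there is no genuine combinatorial obstacle, because Theorem~\ref{master2} already absorbs the two distinct parameters $\a,\be$ into the single cycle parameter $\a+\be$. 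As a pleasant byproduct this also shows that the peak sum $\sum_{\s\in\S_{n+1}^{\pk=j+1}}\a^{\LRmax(\s)-1}\be^{\RLmax(\s)-1}t^{\lrmaxda(\s)+\rlmaxdd(\s)}$ is symmetric in $\a,\be$ and depends only on $\a+\be$.
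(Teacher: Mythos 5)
Your proposal is correct and takes essentially the same route as the paper: the paper likewise combines \eqref{An=Ancyc} with Theorem~\ref{master2} specialized at $\uu=(1,xy,\frac{x+y}{2},\frac{x+y}{2})$, $f=g=1$ and distinct $\a,\be$, expands via the definition \eqref{refine-JI2}, and uses $\val=\pk-1$ (hence $\lda+\rdd=n-2\val$) to identify the coefficient of $(xy)^j(x+y)^{n-2j}$. Your additional remarks---that naively substituting $\alpha\mapsto\frac{\a+\be}{2}$ into \eqref{gamma-linear3a} would not separate the parameters, and that the coefficient extraction is justified by the linear independence of $\{(xy)^j(x+y)^{n-2j}\}_{0\le j\le\lfloor n/2\rfloor}$---are sound and merely make explicit what the paper leaves implicit.
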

\begin{proof}
By~\eqref{An=Ancyc}, then
\begin{equation}\label{A-alpha+beta}
    A_n\left(x,y,t\,|\,\frac{\a+\be}{2}\right)=A_n^{\cyc}\biggl(x,y, \frac{x+y}{2}\,t\,|\,\a+\be\biggr).
\end{equation}
Let $\uu=(1,xy,\frac{x+y}{2},\frac{x+y}{2})$, by  Theorem~\ref{master2} and definition~\eqref{refine-JI2}, we have 
\begin{align}\label{eq:def-S_n+1}
&A_n^{\cyc}\biggl(x,y, \frac{x+y}{2}\,t\,|\,\a+\be\biggr)\nonumber\\
    &=A_n\biggl(1,xy, \frac{x+y}{2},
    \frac{x+y}{2},1, 1,
    t\,|\,\a,\be \biggr)\nonumber\\
    &=\sum_{\s\in\S_{n+1}}(xy)^{\val(\s)}(x+y)^{n-2\val(\s)}t^{\lrmaxda(\s)+\rlmaxdd(\s)}\a^{\LRmax(\s)-1}\be^{\RLmax(\s)-1}2^{2\val(\s)-n}.
\end{align}
As $\val=\pk-1$, combining~\eqref{A-alpha+beta} and~\eqref{eq:def-S_n+1} yields \eqref{gamma-peak-1}. 
\end{proof}

\begin{remark}
Combining \eqref{gamma-linear3a} and \eqref{gamma-peak-1} we obtain
\begin{align}\label{t-DLP}
\sum_{\s\in\S_{n+1}^{\pk=j+1}}&\a^{\LRmax(\s)-1}\be^{\RLmax(\s)-1}t^{\lrmaxda(\s)+\rlmaxdd(\s)}\nonumber\\
&=\sum_{\s\in\S_{n+1}^{\pk=j+1}}\left(\frac{\a+\be}{2}\right)^{\LRmax(\s)+\RLmax(\s)-2}t^{\lrmaxda(\s)+\rlmaxdd(\s)}.
\end{align}
 When $t=1$, Dong, Lin and Pan~\cite[Theorem 1.5]{DLP24} recently 
 proved  \eqref{t-DLP} by a group action.
\end{remark}

Define the generating polynomials of the two kinds of 
$\g$-vectors in  \eqref{equ:gamma:P_n} and~\eqref{gamma:A}
\begin{subequations}
\begin{align}
G^{\cyc}_n(x,t,\alpha):=&
\sum_{j=0}^{\lfloor \frac{n}{2}\rfloor}\widetilde{\gamma}_{n,j}(\alpha, t)x^j=\sum_{\s\in \S^{\cda=0}_n}x^{\exc(\s)}t^{\fix(\sigma)}\alpha^{\cyc\,(\sigma)},\\
G_n(x,t,\alpha):=&
\sum_{j=0}^{\lfloor \frac{n}{2}\rfloor}\gamma_{n,j}(\alpha, t)x^j=\sum_{\s\in \S^{\da=0}_{n+1}}x^{\asc(\s)}\a^{\LRmax(\s)+\RLmax(\s)-2}t^{\rlmaxdd(\s)},
\end{align}
where
\begin{align}\label{subset: cda-da}
\S^{\cda=0}_n:=\{\s\in\S_n: \cda(\s)=0\}\quad {\rm and}\quad \S^{\da=0}_n:=\{\s\in\S_n: \da(\s)=0\}. 
\end{align}

\begin{theorem}\label{Thm: egf of gamma coefficients}
 Let $u=\sqrt{1-4x}$. We have
\begin{align}
1+\sum_{n\ge 1}G^{\cyc}_n(x,t,\alpha)\frac{z^n}{n!}
&=\biggl(\frac{u\,e^{(t-\frac{1}{2})z}}{u\cosh(uz/2)-
\sinh(uz/2) }\biggr)^{\alpha},\label{equ:gen-Gamma-coe}\\
1+\sum_{n\ge 1}G_n(x,t,\alpha)\frac{z^n}{n!}
&=\biggl(\frac{u\,e^{\frac{1}{2}(t-1)z}}{u\cosh(uz/2)-
\sinh(uz/2) }\biggr)^{2\alpha}.\label{equ2:gen-Gamma-coe}
\end{align} 
\end{theorem}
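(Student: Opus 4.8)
The plan is to derive both exponential generating functions directly from Theorem~\ref{master3}, since $G_n^{\cyc}$ and $G_n$ are obtained from the $\gamma$-expansions of $\widetilde{A}_n$ and $A_n$, which are themselves specializations of the master polynomial $A_n(\uu,f,g,t\,|\,\a,\be)$. First I would observe that the combinatorial formula $G_n^{\cyc}(x,t,\alpha)=\sum_{\s\in\S_n^{\cda=0}}x^{\exc(\s)}t^{\fix(\s)}\alpha^{\cyc(\s)}$ is exactly the polynomial obtained by setting $u_1u_2=x$, $u_3+u_4=1$ in $A_n^{\cyc}(\cdot)$ restricted to $\cda=0$; more precisely, comparing with \eqref{eq:gamma1b} at $f=0$ and with \eqref{equ:gamma:P_n}, the generating polynomial of the $\widetilde\gamma_{n,j}(\alpha,t)$ in the variable $x=xy/(x+y)^2$-type normalization corresponds to substituting the homogeneous pair $(X,Y)$ with $XY=x$ and $X+Y=1$. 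Solving $X+Y=1$, $XY=x$ gives $X,Y=\tfrac{1}{2}(1\pm u)$ with $u=\sqrt{1-4x}$, so $X-Y=u$.

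With this substitution in hand, the main step is to plug these values into the known generating function \eqref{equ:gen-CS-cyc} for $A_n^{\cyc}(X,Y,T\,|\,\alpha)$. For \eqref{equ:gen-Gamma-coe} I would use the relation $\widetilde{A}_n(x,y,t\,|\,\alpha)=A_n^{\cyc}(x,y,(x+y)t\,|\,\alpha)$ from \eqref{cyc-lin1}, which at the normalization $X+Y=1$ sets the fixed-point weight to $T=t$; then $G_n^{\cyc}$ arises as the $\cda=0$ restriction, equivalently via \eqref{SZ-bis}, as the $\gamma$-generating polynomial. Substituting $X=\tfrac{1}{2}(1+u)$, $Y=\tfrac{1}{2}(1-u)$ into $\left(\frac{(X-Y)e^{Tz}}{Xe^{Yz}-Ye^{Xz}}\right)^{\alpha}$, the numerator $X-Y$ becomes $u$, and the denominator $Xe^{Yz}-Ye^{Xz}$ can be simplified by factoring out $e^{z/2}$: writing $Yz=\tfrac{z}{2}(1-u)$ and $Xz=\tfrac{z}{2}(1+u)$ one gets
\begin{align*}
Xe^{Yz}-Ye^{Xz}
&=e^{z/2}\Bigl(\tfrac{1+u}{2}e^{-uz/2}-\tfrac{1-u}{2}e^{uz/2}\Bigr)\\
&=e^{z/2}\bigl(u\cosh(uz/2)-\sinh(uz/2)\bigr).
\end{align*}
Here the exponent in the numerator is $e^{Tz}=e^{tz}$, so dividing numerator and denominator and combining the $e^{tz}$ with the $e^{-z/2}$ coming from the denominator yields the factor $e^{(t-\frac{1}{2})z}$, giving precisely \eqref{equ:gen-Gamma-coe}.

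For \eqref{equ2:gen-Gamma-coe} the reasoning is parallel but starts from \eqref{An=Ancyc}, namely $A_n(x,y,t\,|\,\a)=A_n^{\cyc}(x,y,\tfrac{x+y}{2}t\,|\,2\a)$, so the cycle parameter doubles to $2\alpha$ and the fixed-point weight at the normalization $X+Y=1$ becomes $T=\tfrac{1}{2}t$. Applying the same substitution $X,Y=\tfrac{1}{2}(1\pm u)$ in \eqref{equ:gen-CS-cyc} with exponent $2\alpha$, the numerator contributes $e^{Tz}=e^{tz/2}$, which combines with the $e^{-z/2}$ from the denominator to produce $e^{\frac{1}{2}(t-1)z}$; the base is the $2\alpha$-th power of $\frac{u\,e^{\frac{1}{2}(t-1)z}}{u\cosh(uz/2)-\sinh(uz/2)}$, exactly \eqref{equ2:gen-Gamma-coe}.

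I expect the main obstacle to be justifying rigorously that the \emph{$\gamma$-generating polynomial} $G_n^{\cyc}(x,t,\alpha)$ is obtained from $A_n^{\cyc}$ by the homogeneous substitution $X+Y=1$, $XY=x$ rather than some other specialization; this requires invoking the $\gamma$-positive expansion \eqref{equ:gamma:P_n} together with \eqref{gamma1b}–\eqref{gamma3b} to identify the coefficient of $(xy)^j(x+y)^{n-2j}$ with $\widetilde\gamma_{n,j}$, and observing that setting $X+Y=1$ collapses the factor $(X+Y)^{n-2j}$ to $1$ while sending $XY\mapsto x$, so that $\widetilde{A}_n$ evaluated at this point literally becomes $\sum_j\widetilde\gamma_{n,j}(\alpha,t)x^j=G_n^{\cyc}(x,t,\alpha)$. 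Once this identification is in place the remaining computation is the elementary hyperbolic simplification above, and the two generating functions follow by taking the $\alpha$-th and $2\alpha$-th powers respectively.
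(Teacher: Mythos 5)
Your proposal is correct and follows essentially the same route as the paper: both exploit the homogeneity of the $\gamma$-expansions \eqref{equ:gamma:P_n} and \eqref{gamma:A} together with \eqref{cyc-lin1} and \eqref{An=Ancyc} to reduce $G_n^{\cyc}$ and $G_n$ to specializations of $A_n^{\cyc}$, and then substitute into the known generating function \eqref{equ:gen-CS-cyc}. The only difference is cosmetic: you normalize symmetrically via $X+Y=1$, $XY=x$, $X,Y=\tfrac{1}{2}(1\pm u)$, which avoids the paper's choice $y=1$ with the rescaling $z\mapsto z/(1+x)$ and the change of variable $u=\tfrac{1-x}{1+x}=\sqrt{1-4s}$, and in doing so you make explicit the hyperbolic simplification that the paper dismisses as ``a little computation.''
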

\end{subequations}
\begin{proof}
With $y=1$ and $s=\frac{x}{(1+x)^2}$,  the two $\g$-expansions \eqref{equ:gamma:P_n} and~\eqref{gamma:A} can be written, respectively, as follows, 
\begin{subequations}
\begin{align}
&A^{\cyc}_n(x,1,t(x+1)\,|\,\a)=(1+x)^nG^{\cyc}_n(s,t,\alpha),\label{equ:gamma-A^cyc}\\
&A_n(x,1,t\,|\,\a)=(1+x)^nG_n(s,t,\alpha).\label{equ:gamma-A^lin}
\end{align}
 Plugging~\eqref{equ:gamma-A^cyc} in~\eqref{equ:gen-CS-cyc} yields 
\begin{align}\label{equ:gen-Gamma-coe-exp}
1+\sum_{n\ge 1}G^{\cyc}_n(s,t,\alpha)\frac{z^n}{n!}
=\biggl(\frac{(x-1)e^{tz}}{xe^{z/(1+x)}-e^{xz/(1+x)}}\biggr)^{\alpha}, 
\end{align}
After a little computation, the right-hand sides of~\eqref{equ:gen-Gamma-coe} and~\eqref{equ:gen-Gamma-coe-exp} agree  under substitution $u=\frac{1-x}{1+x}=\sqrt{1-4s}$. 

From~\eqref{expgen:P_n-2} with $u_1=u_3=x$,  $u_2=u_4=f=g=1$,  we derive
\begin{equation}\label{gen: A(x,1,t,a)}
1+\sum_{n\ge 1}A_n(x,1,t\,|\,\a)\frac{z^n}{n!}=e^{\a (x+1)tz}\left(\frac{x-1}{xe^{z}-e^{xz}}\right)^{2\a}.
\end{equation}
\end{subequations}
Formula~\eqref{equ2:gen-Gamma-coe} can be proved similarly by plugging~\eqref{equ:gamma-A^lin} in~\eqref{gen: A(x,1,t,a)}. 
\end{proof}
\begin{remark}
Formula \eqref{equ:gen-Gamma-coe} appeared in \cite[Eq.~(13)]{M2Y224} with a different proof, and \eqref{equ2:gen-Gamma-coe} is a $t$-analogue of Carlitz and Scoville's formula~\cite[Eq.~(4.10)]{CS74}. 
\end{remark}

\subsection{$\gamma$-vetcor of $(\a,t)$-Eulerian polynomials  and cycle André permutations}
For $n\ge 0$ and $0\le j\le \lfloor n/2\rfloor$, let 
\begin{equation}
    d_{n,j}(\a,t)=\gamma_{n,j}(\a,t)/2^j. 
\end{equation}
Then, Eq.~\eqref{gamma:A} reads
\begin{equation}\label{gamma:A/2}
A_n(x,y,t\,|\, \a)=\sum_{j=0}^{\lfloor n/2\rfloor}2^jd_{n,j}(\a,t)(xy)^j(x+y)^{n-2j}.
\end{equation}
From Table~\ref{table2} we derive 
the first values of $d_{n,j}(\a,t)$ in Table~\ref{table3}. A glance at these values foretells that the polynomials $d_{n,j}(\a,t)$ have  nonnegative integeral coefficients.
Indeed, from Theorem~\ref{con-frac-A}, 
we derive the  continued fraction
    \begin{subequations}   
    \begin{equation}
  \sum_{n=0}^\infty \sum_{j=0}^{\lfloor n/2\rfloor} d_{n,j}(\a,t) x^j z^n
 = \cfrac{1}{1-b_0 z-\cfrac{\lambda_1z^2}{1-b_1z-\cfrac{\lambda_2 z^2}{1-b_2z-\cdots}}},
\end{equation}
where
\begin{align}
b_k&=k+\a t,\\
\lambda_{k+1}&=\binom{k+1}{2}x+\a (k+1) x\quad (k\geq 0).
\end{align}
\end{subequations}
It follows   that the polynomials $d_{n,j}(\a,t)$ are elements  in $\N[\a,t]$.   The aim of this section is to provide three combinatorial interpretations in terms of cycle André and Andr\'e permutations.

\begin{table}[t]
  $$
  \begin{array}{c|cccc}
\hbox{$n$}\backslash\hbox{$j$}&0&1&2\\
\hline
0& 1&\\
1& \a t&\\
2& \a^2t^2&\a&&\\
3& \a^3t^3&\a+3\a^2 t&&\\
4& \a^4t^4&\a+4\a^2t+6\a^3t^2&\a+3\a^2&\\
5& \a^5t^5&\a+5\a^2t+10\a^3t^2+10\a^4t^3&4\a+10\a^2+5\a^2t+15\a^3t
\end{array}
 $$
 \caption{The values of $d_{n,j}(\a,t)$ for $0\le 2j<n\le 5$.}\label{table3}
 \end{table}

Note that when $x=\a=t=1$ Eq.\eqref{gamma:A/2} is equivalent to the $\gamma$-expansion of Eulerian polynomials~\eqref{euler-gamma} and $d_{n,j}(1,1)$ is the number of Andr\'e permutations of $[n+1]$ with $j$ descents.

There are many equivalent ways to define an André permutation. Following 
Foata-Strehl~\cite{FS74} and 
Hetyei-Reiner~\cite{HR98}, we define André permutations in terms of \emph{$x$-factorization}. 
\begin{definition}
Let $\s=\s_1\s_2\dots\s_n\in\S_n$ and let $x=\s_i$ for $i\in [n]$. The $x$-factorization of $\s$ is given by $\s=u\,\lambda(x)\,x\,\rho(x)\,v$, where
\begin{itemize}
\item[(1)] $\lambda(x)=\s_j\cdots\s_{i-1}$ with $\s_l>x$ for $j\le l\le i-1$, $u=\s_1\cdots\s_{j-1}$, and $\s_{j-1}<x$.
\item[(2)] $\rho(x)=\s_{i+1}\cdots\s_{k}$ with $\s_l>x$ for $i+1\le l\le k$, $u=\s_{k+1}\cdots\s_{n}$, and $\s_{k+1}<x$.
\end{itemize}
Here,  any of words $u,\lambda(x),\rho(x),v$ may be the empty. 
\end{definition}

\begin{example} If $\s=8\,10\,1\,4\,5\,11\,3\,6\,12\,2\,7\,9\in \S_{12}$, then, the $3$-factorization of $\s$ is 
$$\underbrace{8\,10\,1}_{u}\,\underbrace{4\,5\,11}_{\lambda(3)}\,3\,\underbrace{6\,12}_{\rho(3)}\,\underbrace{2\,7\,9}_{v}.$$
\end{example}

\begin{definition}[\cite{HR98}]\label{def:x-factorization-André1-2}
Let $\s=\s_1\cdots\s_n\in\S_n$. 
Say that $\s$ is an \textbf{André permutation of the first kind (resp. second kind)} 
if $\s$ has no double descents, i.e., $\s_{i-1}>\s_{i}>\s_{i+1}$, and
each $x$-factorisation $u\,\lambda(x)\,x\,\rho(x)\,v$ of $\s$ has property
\begin{itemize}
\item $\lambda(x)=\emptyset$ if $\rho(x)=\emptyset$,
\item $\max(\lambda(x))<\max(\rho(x))$  $($resp. $\min(\rho(x))<\min(\lambda(x))$$)$  if $\rho(x))\neq\emptyset$ and $\lambda(x)\neq\emptyset$.
\end{itemize}
\end{definition}

Let $\A_n^1$ (resp. $\A_n^2$) be the set of André permutations of the first (resp. second) kind  in $\S_n$.  For $n\in [5]$ these permutations are given in  Tables~\ref{example André permutations1} and~\ref{example André permutations2}. 
It is known~\cite{FS70} that the cardinality of 
$\A_n^1$ (resp. $\A_n^2$) is the  \emph{Euler number} $E_{n}$, which is the $n$th coefficient in  the Taylor expansion of $\sec(z)+\tan(z)$, namely,
\begin{equation}
    \sum_{n\ge 0}E_n\frac{z^n}{n!}=\sec(z)+\tan(z).
\end{equation}
The sequence  $E_n$'s  starts with  $1,\, 1,\,2,\,5,\,16,\,61,\,271,\dots$, see~\cite[A138265]{OEIS}.

Let $\s=\s_1\dots\s_n\in\S_n$. A right-to-left minimum of $\s$ is an element $\s_i$ such that $\s_j<\s_i$ if $j>i$. Let $\rlmin(\s)$ be the number of right-to-left minima of $\s$. An index $i\in[n]$ is a right-to-left-minimum-da if it is a double ascent and $\s_i$ is a right-to-left minimum. Let $\rlminda(\s)$ be the number of  right-to-left-minimum-das of $\s$.

%
%

Let $A:=\{a_1,\dots, a_k\}$ be a set of $k$ positive integers. 
Let $C=(a_1,\dots, a_k)$ be a cycle (cyclic permutation) of $A$  with $a_1=\min\{a_1,\dots, a_k\}$. Then,  cycle  $C$ is called an
\textbf{\emph{André cycle}} if the word 
 $a_2\ldots a_k$ is an André permutation of the first kind.
 We say that a  permutation is a \textbf{cycle André permutation}
if it is a product of disjoint André cycles.  Let $\Web_{n}$ be the set of cycle André   permutations  of $[n]$. For example, $(1,5,6)(2,7,4,8)(3)$ is a cycle André   permutation, but $(1,2,5,3)(4)$ is not because $253\sim 132$, which is not an Andr\'e permutation of the first kind.
\begin{remark}
  Hwang et al.~\cite{HJO23} used   cycle André permutations  to characterise 
  the so-called \emph{Web permutations}.   
\end{remark}


\begin{table}[t]\label{AndreI}
\begin{tabular}{ll} 
        \hline
       $n=1$: &  1; \quad $n=2$:\quad 12;\hfil\\
       $n=3$: &  123, 213; \\
       $n=4$:&  1234, 1324, 2314, 2134, 3124; \\
      $n=5$:&12345, 12435, 13425, 23415, 13245, 14235, 34125, 24135, \\
\hphantom{$n=5$:}&23145, 21345, 41235, 31245, 21435, 32415, 41325, 31425. \\
        \hline
    \end{tabular} 
\caption{Andr\'e permutations of the first  kind of length $\leq 5$.}\label{example André permutations1}
\end{table}
\begin{table}[t]\label{AndreII}
\begin{tabular}{ll} 
 \hline
$n=1$:& 1;\quad $n=2$:\quad 12;\\
$n=3$:& 123, 312;\\
$n=4$: &1234, 1423, 3412, 4123, 3124;\\
$n=5$:& 12345, 12534, 14523, 34512, 15234, 14235, 34125, 
45123,\\
\hphantom{$n=5$:}&35124, 51234, 41235, 31245, 51423, 53412, 41523, 31524.\\
 \hline
    \end{tabular} 
\caption{Andr\'e permutations of the second kind of length $\leq 5$.}\label{example André permutations2}
\end{table}
\begin{theorem}\label{main-theorem1}
For $n\ge 0$ and $0\le j\le \lfloor n/2\rfloor$, we have 
\begin{subequations}
\begin{align}
d_{n,j}(\a,t)=&\sum_{\mycom{\s\in\Web_{n}}{\drop(\s)=j}}t^{\fix(\s)}\a^{\cyc(\s)},\label{combinatorial interpretation of d-web}\\
d_{n,j}(\a,t)=&\sum_{\mycom{\s\in \A_{n+1}^1}{\des(\s)=j}}t^{\rlminda(\s)}\a^{\rlmin(\s)-1}\label{combinatorial interpretation of d-andre},\\
d_{n,j}(\a,t)=&\sum_{\mycom{\s\in \A_{n+1}^2}{\des(\s)=j}}t^{\rlminda(\s)}\a^{\rlmin(\s)-1}.\label{combinatorial interpretation of d-andre2}
\end{align}
\end{subequations}
\end{theorem}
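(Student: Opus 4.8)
The plan is to establish the cyclic interpretation \eqref{combinatorial interpretation of d-web} first, and then transport it to the two linear models \eqref{combinatorial interpretation of d-andre} and \eqref{combinatorial interpretation of d-andre2} by variants of Foata's fundamental transformation. I would start from the cyclic $\gamma$-expansion already in hand: by Theorem~\ref{coro: gamma-expansion-A_n(x,y|a)-cyc-version}, $A_n(x,y,t\,|\,\a)=A_n^{\cyc}(x,y,\frac{x+y}{2}t\,|\,2\a)=\sum_j 2^j d_{n,j}(\a,t)(xy)^j(x+y)^{n-2j}$. I would re-expand the middle expression $\sum_{\s}x^{\exc}y^{\drop}(\frac{x+y}{2}t)^{\fix}(2\a)^{\cyc}$ by partitioning $\S_n$ into the orbits of the cyclic valley-hopping action of Cooper--Jones--Zhuang~\cite{CJZ20}. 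On each orbit $O$ the statistics $\cpk=\cval$, $\fix$ and $\cyc$ are constant while the cycle double ascents and cycle double descents are freely interchanged; writing $m=\cda+\cdd=n-2\cpk-\fix$ for the constant number of free letters, the orbit sum of $x^{\exc}y^{\drop}=x^{\cpk+\cda}y^{\cpk+\cdd}$ collapses to $(xy)^{\cpk}(x+y)^{m}$. Collecting the fixed-point and cycle weights and using $(x+y)^{m}(\tfrac{x+y}{2})^{\fix}=2^{-\fix}(x+y)^{n-2\cpk}$, the coefficient of $(xy)^j(x+y)^{n-2j}$ gives $d_{n,j}(\a,t)=\sum_{O:\,\cpk(O)=j}2^{\cyc(O)-\fix(O)-j}\a^{\cyc(O)}t^{\fix(O)}$, the sum ranging over hopping orbits.

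Since cyclic valley-hopping acts within each cycle separately, both this orbit count and the set $\Web_n$ factor as products over cycles, so it suffices to match a single nontrivial cycle. Writing a cycle with its minimum first turns its cyclic peak/valley/double-ascent/double-descent statistics into the ordinary (boundary-$0$) linear statistics of the word $w$ formed by the remaining letters, and under this dictionary the André-cycle condition becomes precisely ``$w$ is an André permutation of the first kind'' and $\cpk$ becomes $\pk(w)=\des(w)+1$. The required single-cycle identity, $\#\{\text{orbits of }k\text{-cycles with }\cpk=p\}=2^{\,p-1}\,\#\{\text{André cycles with }\cpk=p\}$, then becomes the classical statement that the $\gamma$-coefficient of the Eulerian polynomial is $2^{\,p-1}$ times the number of André permutations with the matching descent number, i.e. \eqref{euler-gamma} of Foata--Schützenberger. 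Restoring the $\a^{\cyc}t^{\fix}$ decoration (one $\a$ per cycle through the product structure, one $t$ per fixed point) yields \eqref{combinatorial interpretation of d-web}. I expect the accounting of the powers of two to be the main obstacle: one must verify that the factor $2^{\cyc-\fix-j}$ produced by the valley-hopping expansion is absorbed, cycle by cycle, exactly by passing from all orbit representatives to the genuinely André ones (for instance $(1\,4\,2\,3)$ has $\cda=\cdd=0$ yet is not an André cycle, so André cycles are a proper, $2^{\cpk-1}$-sparse subfamily of the orbit representatives).

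For the two linear models I would produce explicit bijections. For \eqref{combinatorial interpretation of d-andre} I would introduce the min-analogue $\theta_2$ of the map $\theta_1$ of Lemma~\ref{mapping1-properties}, namely $\theta_2(\s)=(\theta_1(\s^{-1}))^{c}$ where $c$ denotes complementation $v\mapsto n+1-v$; since $\s\mapsto\s^{-1}$ exchanges $\exc$ and $\drop$ and fixes $\fix,\cyc$, while $c$ exchanges $\asc\leftrightarrow\des$, $\rlmax\leftrightarrow\rlmin$ and $\rlmaxdd\leftrightarrow\rlminda$, the map $\theta_2$ sends $(\drop,\fix,\cyc)$ to $(\des,\rlminda,\rlmin)$. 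Appending the largest letter $n+1$ at the end of $\theta_2(\s)$ raises the size to $n+1$, creates exactly one new right-to-left minimum and no new descent nor $\rlminda$ (it sits as a peak), giving the transport $(\drop,\fix,\cyc)\mapsto(\des,\rlminda,\rlmin-1)$ demanded by \eqref{combinatorial interpretation of d-andre}. The key lemma to check is that this composite restricts to a bijection $\Web_n\to\A^1_{n+1}$, i.e. that the André-cycle condition is equivalent to the $x$-factorization condition of Definition~\ref{def:x-factorization-André1-2} for the image word. Interpretation \eqref{combinatorial interpretation of d-andre2} follows by running the same construction with the second-kind convention $\min(\rho(x))<\min(\lambda(x))$ in place of $\max(\lambda(x))<\max(\rho(x))$, equivalently by composing with a $\des$-, $\rlmin$- and $\rlminda$-preserving bijection $\A^1_{n+1}\to\A^2_{n+1}$; here the delicate point is once more the compatibility of the extremal ($\max$ versus $\min$) conditions in the definition of André permutations with the order in which the cycles are read.
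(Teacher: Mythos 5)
Your derivation of \eqref{combinatorial interpretation of d-web} is correct and takes a genuinely different route from the paper. The paper proves \eqref{combinatorial interpretation of d-web} analytically: it sets up the recursive structure of Andr\'e permutations (Lemma~\ref{lem:def-andre}), derives the integral equation \eqref{equ:E(x;z)} for $E(x;z)$, converts Andr\'e cycles to Andr\'e permutations via the bijection $\zeta$ of Lemma~\ref{lem:cyclic-andre-drop}, applies the exponential formula to get \eqref{egf-formula-web}, and matches this against the closed-form generating function \eqref{exp-dn} for $d_{n,j}(\a,t)$. You instead start from \eqref{gamma-linear1b}, i.e. $d_{n,j}(\a,t)=\sum 2^{\cyc-\fix-j}\a^{\cyc}t^{\fix}$ over $\cda=0$ representatives, observe (correctly --- the segments of $\theta_2(\s)$ are delimited by the cycle maxima, which are left-to-right maxima of the word and hence peaks that never hop) that cyclic valley-hopping preserves cycle supports, so both sides factor over cycles, and reduce to the per-cycle identity $\#\{k\text{-cycles}:\cda=0,\,\cpk=p\}=2^{p-1}\#\{\text{Andr\'e cycles}:\cpk=p\}$, which via the min-first dictionary is exactly the classical expansion \eqref{euler-gamma}. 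The power-of-two bookkeeping you worried about does close up: a fixed point contributes $2^{1-1}=1$ and each cycle of length $\ge 2$ contributes $2^{1-p}$, absorbed by the classical $2^{p-1}$. This is more combinatorial and shorter than the paper's EGF computation, though it loses the explicit formulas \eqref{egf-formula-web}--\eqref{exp-dn} that the paper reuses in Section~6.

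For \eqref{combinatorial interpretation of d-andre} your plan is close to the paper's but has one unproven lemma, which you yourself flag. Unwinding your composite, $(\theta_1(\s^{-1}))^c$ followed by appending $n+1$ equals the paper's map $\phi$ (concatenate the cycles min-at-end in increasing order of minima, append $n+1$) applied not to $\s$ but to $\kappa(\s):=c\,\s^{-1}c$. So your map is a bijection $\Web_n\to\A^1_{n+1}$ if and only if $\Web_n$ is stable under the involution $\kappa$, i.e.\ if $w_-M$ Andr\'e of the first kind implies that the reverse-complement of $(\min)w_-$ is again Andr\'e --- true in small cases but nowhere proved, and not an off-the-shelf fact. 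The paper sidesteps this by applying $\phi$ to $\s$ itself and citing Hwang--Jang--Oh for bijectivity, with the statistic transport \eqref{prop:phi}. Also, your intermediate bookkeeping is off at the right boundary: appending $n+1$ \emph{does} create a new $\rlminda$ whenever the last letter of $\theta_2(\s)$ tops an ascent (with the $\mathbf{0\text{-}0}$ convention the final position can never be a double ascent beforehand), and this gain exactly compensates the boundary loss under complementation; the composite transport $(\drop,\fix,\cyc)\mapsto(\des,\rlminda,\rlmin-1)$ is right, but not for the reason you give.

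The genuine gap is \eqref{combinatorial interpretation of d-andre2}. You dispose of it by ``composing with a $\des$-, $\rlmin$- and $\rlminda$-preserving bijection $\A^1_{n+1}\to\A^2_{n+1}$'', but the existence of such a bijection is precisely the hardest new content of the paper: Section~4.5 constructs it through Andr\'e trees, the local subtree-exchange operators $\phi_i$, the commutation Lemma~\ref{lem:comm}, and the resulting map $\Psi$ preserving $(\leaf,\rsho',\rsho)$, transported back by Proposition~\ref{prop:statistics-keep}. It cannot be assumed known: the paper explicitly remarks that the Foata--Han bijection $\A_n^1\to\A_n^2$ of \cite{FH16} fails to preserve this triple. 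Your alternative of ``running the same construction with the second-kind convention'' also does not go through as stated: it would require both a second-kind analogue of the per-cycle classical identity (a refinement of \eqref{euler-gamma} for Andr\'e permutations of the second kind carrying the $(\rlminda,\rlmin)$ decoration) and stability of the second-kind $x$-factorization condition under the min-at-end concatenation --- neither is established, and the second is doubtful since the $\min(\rho)<\min(\lambda)$ condition interacts with letters across cycle boundaries. So parts \eqref{combinatorial interpretation of d-web}--\eqref{combinatorial interpretation of d-andre} of your proposal are sound modulo the $\kappa$-stability lemma, but \eqref{combinatorial interpretation of d-andre2} is asserted rather than proved.
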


To prove~\eqref{combinatorial interpretation of d-web}, we show that  the exponential generating functions of both sides are equal;
we then derive~\eqref{combinatorial interpretation of d-andre} from~\eqref{combinatorial interpretation of d-web}  by using a bijection from $\Web_n$ to $\A_{n+1}^{1}$, and we derive ~\eqref{combinatorial interpretation of d-andre2} from \eqref{combinatorial interpretation of d-andre} by constructing  a bijection from  $\A_{n+1}^1$ to $\A_{n+1}^2$ via   André trees.

\section{Cyclic valley-hopping: 
Proof of Theorem~\ref{master1} and Theorem~\ref{master2}}


\subsection{Cyclic valley-hopping}\label{cyclic valley-hopping}

Fix a letter $x\in[n]$ and a permutation $\s\in\S_n$, the \emph{x-factorization} of $\s$ is defined as the concatenation $\s=w_1w_2\,x\,w_4w_5$, where $w_2$ (resp. $w_4$) is the maximal contiguous subword immediately to the left (resp. right) of $x$ whose letters are all smaller than $x$. For example, if $\s=934278516$ and $x=7$, then $w_1=9$, $w_2=342$, $w_4=\emptyset$ and $w_5=8516$.  Define the involution $\xi_x(\s):\S_n\to\S_n$ by
$$
\xi_x(\s):=
\begin{cases}
w_1w_4\,x\,w_2w_5,& \text{if $x$ is a double ascent or a double descent of $\s$};\\
\s,&\text{if $x$ is a valley  or a peak of $\s$}.\\
\end{cases}
$$
Here, we use conventions $\s(0)=\s(n+1)=\infty$.
It is known that  the involutions $\xi_x$ and $\xi_y$ commute with each other, see~\cite{Br08}. Hence,
for any subset $S\subseteq [n]$, we can define an operator $\xi_S:=\prod_{x\in S}\xi_x$. As shown by Br\"anden~\cite{Br08}, 
the involutions $\xi_S$ ($S\subseteq [n]$) define a $\Z_2^n$-action on $\S_n$, which 
 are called \emph{modified Foata and Strehl actions} or  \emph{MFS}-action, see
 ~\cite{FS74}.
 A geometric version of this action was 
given  by Shapiro et al.~\cite{SWG83}.

For convenience,  we describe another variant  $\theta_2$ of 
Foata's fundamental transformation~\textbf{FFT}.
For $\s\in \S_n$,  the mapping
$\theta_2: \s\mapsto \theta_2(\s)$ goes  as follows:
\begin{itemize}
\item Factorize $\s$ as product of disjoint 
cycles with the largest letter in the first position of each cycle.
\item Order the cycles from left to right in increasing order of their largest letters, that is  denoted by $\breve{\s}$.
\item Erase the parentheses in $\breve{\s}$ to obtain 
$\theta_2(\s)$.
\end{itemize}
For example, if  $\s=4271365$  then $\breve{\s}=(2)(4\,1)(6)(7\,5\,3)$ and  $\theta_2(\s)=2416753$.

Athanasiadis-Savvidou~\cite{AS12} 
 defined a cyclic  analogue of \emph{valley hopping}  for derangements by combining Foata's fundamental transformation and MFS-action,  see
 also Sun-Wang~\cite{SW14} and 
 Tirrell-Zhuang~\cite{TZ19}. Finally, Cooper, Jones and Zhuang~\cite{CJZ20} extended
 cyclic valley-hopping  to  permutations 
 by letting  the fixed points unchanged. 
Simply define $\psi_x(\s):\S_n\to\S_n$ by
$$
\psi_x(\s):=
\begin{cases}
\theta_2^{-1}\circ\xi_x\circ \theta_2(\s),& \text{if $x$ is not a fixed point of $\s$};\\
\s,&\text{if $x$ is a fixed point of $\s$},\\
\end{cases}
$$
where we treat the $0$-th letter of $\theta_2(\s)$ as 0 and the (n+1)-th letter as $\infty$. Given a subset $S\subseteq [n]$, define $\psi_S:=\prod_{x\in S}\psi_x$. We will call the $\Z_2^n$-action defined by the involutions $\{\psi_S\}_{S\subseteq [n]}$ \emph{cyclic valley-hopping}. For example, given $\pi=(5)(6\,4\,2)(9\,3\,7\,8)(10\,1)$ with $S=\{4,8\}$, we have $\psi_{S}(\pi)=(5)(6\,2\,4)(9\,8\,7\,3)(10\,1)$, see Fig.~\ref{graph:val-hop}.

\begin{figure}[tbp]
\begin{center}
\begin{tikzpicture}[scale=0.5] 	
\draw[step=1,lightgray,thin] (0,0) grid (11,11); 
	\tikzstyle{ridge}=[draw, line width=1, color=blue] 
	\tikzstyle{ridge1}=[draw, line width=1, dotted,  color=blue] 
	\path[ridge] (1,5)--(2,6)--(3,4)--(4,2)--(5,9)--(6,3)--(7,7)--(8,8)--(9,10)--(10,1); 
	\path[ridge1] (0,0)--(1,5);
	\path[ridge1] (10,1)--(11,11);
	\tikzstyle{node0}=[circle, inner sep=2, fill=black] 
	\tikzstyle{node1}=[rectangle, inner sep=3, fill=red] 
	\tikzstyle{node2}=[rectangle, inner sep=2, fill=red] 
	\node[node0] at (0,0) {}; 
	\node[node0] at (1,5) {}; 
	\node[node0] at (2,6) {}; 
	\node[node2] at (3,4) {}; 
	\node[node0] at (4,2) {}; 
	\node[node0] at (5,9) {}; 
	\node[node0] at (6,3) {}; 
	\node[node0] at (7,7) {}; 
	\node[node2] at (8,8) {}; 
	\node[node0] at (9,10) {}; 
	\node[node0] at (10,1) {}; 
	\node[node0] at (11,11) {}; 
	\tikzstyle{hop1}=[draw, line width = 1.5, color=red,->]
	\tikzstyle{hop2}=[draw, line width = 1.5, color=red,->] 
	\path[hop1] (3,4)--(4,4);
	\path[hop2] (7.8,8)--(6,8);
	\tikzstyle{pi}=[above=-20] 
	\node[pi] at (0,0) {0}; 
	\node[pi] at (1,0) {5}; 
	\node[pi] at (2,0) {6}; 
	\node[pi,color=red] at (3,0) {4}; 
	\node[pi] at (4,0) {2}; 
	\node[pi] at (5,0) {9}; 
	\node[pi] at (6,0) {3}; 
	\node[pi] at (7,0) {7}; 
	\node[pi,color=red] at (8,0) {8}; 
	\node[pi] at (9,0) {10}; 
	\node[pi] at (10,0) {1}; 
	\node[pi] at (11,0) {$\infty$}; 
	\path[draw,line width=1,->] (12,5)--(15,5); 
	\begin{scope}[shift={(16,0)}] 
	\draw[step=1,lightgray,thin] (0,0) grid (11,11); 
	\path[ridge] (1,5)--(2,6)--(3,2)--(4,4)--(5,9)--(6,8)--(7,3)--(8,7)--(9,10)--(10,1); 
	\path[ridge1] (0,0)--(1,5);
	\path[ridge1] (10,1)--(11,11);
	\node[node0] at (0,0) {}; 
	\node[node0] at (1,5) {}; 
	\node[node0] at (2,6) {}; 
	\node[node0] at (3,2) {}; 
	\node[node0] at (4,4) {}; 
	\node[node0] at (5,9) {}; 
	\node[node0] at (6,8) {}; 
	\node[node0] at (7,3) {}; 
	\node[node0] at (8,7) {}; 
	\node[node0] at (9,10) {}; 
	\node[node0] at (10,1) {}; 
	\node[node0] at (11,11) {};
	\node[pi] at (0,0) {0}; 
	\node[pi] at (1,0) {5}; 
	\node[pi] at (2,0) {6}; 
	\node[pi] at (3,0) {2}; 
	\node[pi] at (4,0) {4}; 
	\node[pi] at (5,0) {9}; 
	\node[pi] at (6,0) {8};
	\node[pi] at (7,0) {7}; 
	\node[pi] at (8,0) {3}; 
	\node[pi] at (9,0) {10}; 
	\node[pi] at (10,0) {1};
	\node[pi] at (11,0) {$\infty$}; 
	\end{scope}
\end{tikzpicture}
\end{center}

\caption{Cyclic valley-hopping on $\pi=(5)(6\,4\,2)(9\,3\,7\,8)(10\,1)$ with $S=\{4,8\}$
yields $\psi_{S}(\pi)=(5)(6\,2\,4)(9\,8\,7\,3)(10\,1)$.}\label{graph:val-hop}
\end{figure}

Note that cyclic valley-hopping dose not affect cyclic valleys, cyclic peaks and fixed points, but toggles between cyclic double ascents and cyclic double descents. The following result is due to Cooper et al.~\cite[ Proposition 3 and 4]{CJZ20}.

\begin{prop}[Cooper, Jones and Zhuang]\label{prop:psi}
For any $S\subseteq[n]$ and $\s\in\S_n$, we have
\begin{itemize}
\item[(1)] $\Cval(\psi_S(\s))=\Cval(\s)$;
\item[(2)] $\Cpk(\psi_S(\s))=\Cpk(\s)$;
\item[(3)] $\Cda(\psi_S(\s))=(\Cda(\s)\setminus S)\cup (S\cap \Cdd(\s))$;
\item[(4)] $\Cdd(\psi_S(\s))=(\Cdd(\s)\setminus S)\cup (S\cap \Cda(\s))$;
\item[(5)] $\Fix(\psi_S(\s))=\Fix(\s)$;
\item[(6)] $\cyc(\psi_S(\s))=\cyc(\s)$.
\end{itemize}
\end{prop}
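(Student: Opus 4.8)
The plan is to transfer the whole statement to the word $w=\theta_2(\s)$, where (for a non-fixed letter $x$) the cyclic valley-hopping $\psi_x$ becomes the ordinary MFS-involution $\xi_x$, whose action on the linear statistics $\pk,\val,\da,\dd$ is already recorded. The linchpin will be a dictionary translating the cyclic statistics of $\s$ into the linear statistics of $w$, after which parts (1)--(6) become bookkeeping on $w$.

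First I would prove the correspondence lemma: writing $w=\theta_2(\s)$ with the convention $w_0=0$, $w_{n+1}=\infty$, a non-fixed letter $x$ occupies a peak / valley / double ascent / double descent of $w$ precisely when $x$ is a $\Cpk$ / $\Cval$ / $\Cda$ / $\Cdd$ of $\s$, while every fixed point of $\s$ is a double ascent of $w$. To see this, recall that $\theta_2$ writes each cycle with its maximum $M$ in front and orders the cycle-blocks by increasing maxima. If $x$ is interior to its block, its left and right neighbours in $w$ are exactly $\s^{-1}(x)$ and $\s(x)$, so the two notions coincide verbatim. The two boundary cases are settled by the ordering of the blocks: if $x=M$ is the block-maximum, both neighbours in $w$ are smaller ($\s(x)<M$ on the right, and an earlier-block letter $<M$ on the left), matching the cyclic peak condition $\s^{-1}(x)<x>\s(x)$; if $x$ is the last letter of its block, its right neighbour in $w$ is the maximum of the next block (or $\infty$), which exceeds $x$ exactly as $\s(x)=M>x$ does cyclically, while the left neighbour is still $\s^{-1}(x)$. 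The singleton block corresponding to a fixed point has a smaller left neighbour and a larger right neighbour, hence is a double ascent, explaining the exemption of fixed points.

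Next I would show that, for a non-fixed $x$, the involution $\xi_x$ acts on $w$ entirely inside the cycle-block of $x$. In the $x$-factorization $w=w_1w_2\,x\,w_4w_5$ the runs $w_2,w_4$ consist of letters $<x$; since the block of $x$ begins with its maximum (which is $\ge x$ and, when $x$ is not that maximum, sits to the left of $x$ inside the block) and is followed by the larger maximum of a later block, neither $w_2$ nor $w_4$ can leave the block. Hence $\xi_x(w)$ has the same sequence of cycle-blocks, with the same maxima in front and in the same order, so it is again of the form $\theta_2(\cdot)$, and $\theta_2^{-1}(\xi_x(w))$ has the same number of cycles and the same singleton cycles as $\s$. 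This yields parts (5) and (6), and guarantees that the correspondence lemma applies verbatim to $\psi_x(\s)$, since no fixed point is created or destroyed.

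Finally I would assemble the six identities. Because $\theta_2\circ\psi_x=\xi_x\circ\theta_2$ on non-fixed letters and $\psi_x$ fixes fixed points, the dictionary converts the known behaviour of the MFS-action --- $\xi_x$ preserves every peak and valley and interchanges double ascents with double descents --- into parts (1)--(4) for a single $x$; fixed points, being double ascents of $w$ left untouched by $\psi_x$, never interfere. For the operator $\psi_S=\prod_{x\in S}\psi_x$ I would note that $\psi_y=\mathrm{id}$ for $y\in\Fix(\s)$, so $\psi_S=\theta_2^{-1}\,\xi_{S\setminus\Fix(\s)}\,\theta_2$; the commutativity of the $\psi_x$ is inherited from that of the $\xi_x$ together with the fixed-point preservation just established, and since $\Cda(\s)$ and $\Cdd(\s)$ contain no fixed point the set formulas in (3) and (4) are insensitive to whether $S$ meets $\Fix(\s)$. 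The main obstacle will be the correspondence lemma: the bookkeeping at the block-maximum and block-end, and the role of the asymmetric convention $w_0=0$, $w_{n+1}=\infty$, must be checked carefully, for it is exactly this asymmetry that forces every fixed point to be a double ascent and thereby explains why $\psi_x$ must leave fixed points alone.
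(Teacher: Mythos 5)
Your proposal is correct, and it is worth noting that the paper itself gives no proof of this proposition at all: it is quoted verbatim from Cooper--Jones--Zhuang \cite[Propositions 3 and 4]{CJZ20}, so you have supplied the argument that the paper outsources. Your proof follows the mechanism implicit in the very definition $\psi_x=\theta_2^{-1}\circ\xi_x\circ\theta_2$: the dictionary between cyclic statistics of $\s$ and linear statistics of $w=\theta_2(\s)$ under the $0$--$\infty$ boundary convention is exactly right (block maxima become peaks, block-final letters see the next, larger, block maximum on their right, and singleton blocks are double ascents, which is why fixed points must be exempted), and your observation that the hop stays inside the cycle-block is the key point that delivers (5) and (6) and lets the known behaviour of $\xi_x$ yield (1)--(4). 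One small imprecision: your claim that ``neither $w_2$ nor $w_4$ can leave the block'' fails when $x=M$ is the block maximum, since then \emph{every} letter to the left of $x$ in $w$ is smaller than $x$ (the blocks are ordered by increasing maxima), so $w_2$ swallows all earlier blocks; this is harmless because in that case $x$ is a peak of $w$ and $\xi_x$ is the identity, and whenever $\xi_x$ actually swaps ($x$ a double ascent or double descent of $w$) one has $x\neq M$, so your containment argument applies. A further remark that would tighten your commutativity step: since all letters of a block other than its maximum are smaller than both the block maximum on their left and the next block maximum on their right, the restriction of the action to each block $M\,c_2\cdots c_k$ is precisely the ordinary MFS-action with $\infty$--$\infty$ boundary on the word $c_2\cdots c_k$, so Br\"and\'en's commutativity applies block by block, and operators attached to distinct blocks have disjoint supports.
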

%

\subsection{Proof of Theorem~\ref{master1}}
Given a subset $S\subseteq [n]$,  consider the group $\Z_2^n$ acting on $\S_n$ via the functions $\psi_S$. 
For any permutation $\s\in\S_n$,  denote the orbit of $\s$ under cyclic valley-hopping by
$\Orb(\s):=\{\psi_S(\s)|S\subseteq[n]\}$. The cyclic MFS-action divides the set $\S_n$ into disjoint orbits. Moreover, for $\s\in\S_n$, if $x\in S$ is a cyclic double descent of $\s$, then $x$ is a cyclic double ascent of $\psi_x(\s)$. Hence, there is a unique permutation in each orbit which has no cyclic double descents. Let $\bar{\s}$ be such a unique element in $\Orb(\s)$. By Proposition~\ref{prop:psi}, we have 
\begin{subequations}
\begin{equation}\label{Orb:u}
\sum_{\pi\in\Orb(\s)}(u_1u_2)^{\cpk(\pi)}u_3^{\cda(\pi)}u_4^{\cdd(\pi)}t^{\fix(\pi)}\a^{\cyc(\pi)}=
(u_1u_2)^{\cpk(\bar{\s})}(u_3+u_4)^{\cda(\bar{\s})}t^{\fix(\bar{\s})}\a^{\cyc(\bar{\s})}.
\end{equation}
By definition, it is clear that for $\s\in\S_n$,
\begin{equation}\label{exc-drop}
   \exc(\s)=\cda(\s)+\cpk(\s)\quad \text{and} \quad \drop(\s)=\cdd(\s)+\cval(\s). 
\end{equation}
Setting $u_1=u_3=x$ and $u_2=u_4=y$ in~\eqref{Orb:u} yields
\begin{equation}\label{Orb:x}
\sum_{\pi\in\Orb(\s)}x^{\exc(\pi)}y^{\drop(\pi)}t^{\fix(\pi)}\a^{\cyc(\pi)}=(xy)^{\cpk(\bar{\s})}(x+y)^{\cda(\bar{\s})}t^{\fix(\bar{\s})}\a^{\cyc(\bar{\s})}.
\end{equation}
Thus, if $u_1u_2=xy$ and $u_3+u_4=x+y$, combining 
\eqref{Orb:u} and \eqref{Orb:x} we have 
\begin{equation}
\sum_{\pi\in\Orb(\s)}(u_1u_2)^{\cpk(\pi)}u_3^{\cda(\pi)}u_4^{\cdd(\pi)}t^{\fix(\pi)}\a^{\cyc(\pi)}=\sum_{\pi\in\Orb(\s)}x^{\exc(\pi)}y^{\drop(\pi)}t^{\fix(\pi)}\a^{\cyc(\pi)}. 
\end{equation}
\end{subequations}
 Then summing over all the orbits of $\S_n$ gives Eq.~\eqref{eq:master theorem}.\qed

\subsection{Proof of Theorem~\ref{master2}}
For a  finite set  of positive integers  $E$,  we denote by $\S_E$ 
the set of permutations of $E$.  For   $\s\in \S_E$  define the weight function
\begin{subequations}
\begin{align}
&w(\s; \a,a, b)=(u_1 u_2)^{\cpk(\s)}u_3^{\cda(\s)}u_4^{\cdd(\s)} a^{\fix(\s)}b^{\cyc(\s)-\fix(\s)}\a^{\cyc(\s)},\\
&w_1(\s; \a,a, b,t)=w(\s; \a,a, b)\,t^{\lrmaxda(\s)},\\
&w_2(\tau; \a,a, b,t)=w(\s; \a,a, b)\,t^{\rlmaxdd(\tau)}.
\end{align}
\end{subequations}
Let 
$\S_n^{\bigstar}$ be the set of permutations in $\S_n$ of which each cycle has a color in $\{\red{Red}, \blue{Blue}\}$.
An element of $\S_n^{\bigstar}$ is called a \emph{cycle decorated permutation} (CDP).
Hence,  a CDP  $\pi\in \S_n^{\bigstar}$ is in bijection 
with a pair of permutations 
 $(\s, \tau)\in \S_A\times \S_B$ such that $(A,B)$ is an ordered set partition of $[n]$ and $\pi=\s \tau$, namely,
the permutation $\s$ consists of 
all red cycles and $\tau$ the blue ones.

By Theorem~\ref{master1}, under the assumption that $xy=u_1u_2$ and  $x+y=u_3+u_4$, 
the left-hand side of equality \eqref{equ:A(u,s,a,b)-A^cyc(x,y,t,a)-with-s} can be written as 
\begin{align}\label{equ:A_n^cyc with weight function}
A_n^{\cyc}&\left( x,y, t(\a u_3+\beta u_4)(\a f+\beta g)^{-1}\,|\, \a f+\beta g\right)\nonumber\\
&=\sum_{\pi\in\S_n}(u_1 u_2)^{\cpk(\pi)}u_3^{\cda(\pi)}u_4^{\cdd(\pi)}(t(\a u_3+\be u_4))^{\fix(\pi)}(\alpha f+\be g)^{\cyc(\pi)-\fix(\pi)}\nonumber\\
&=\sum_{(\s,\tau)\in \S_n^{\bigstar}}w_1(\s; \a, u_3, f ,t)\,w_2(\tau; \beta, u_4, g,t).
\end{align}
On the other hand, any permutation $w=w_1\ldots w_{n+1}\in \S_{n+1}$ 
can be written uniquely as 
$w=u\,x\,v$ with $u=w_1\ldots w_{k-1}$, $w_k=n+1$ and $v=w_{k+1}\ldots w_{n+1}$.
 Here, by convention, $u=\emptyset$ if $k=1$ and $v=\emptyset$ if $k=n+1$.

We define a mapping  $\rho:\S_n^{\bigstar}\to  \S_{n+1}$ as follows:
\begin{align}\label{mapping: rho}
(\s,\tau)\mapsto \widetilde{\pi}:=\theta_2(\s)\,x\, \theta_1(\tau) \quad \text{with}\,\,  x=n+1,
\end{align}
where $\theta_1$ and $\theta_2$ are the FFT (cf. subsection~\ref{cyclic valley-hopping} and~\ref{sec:Eulerian identity}.)
Clearly this is a bijection and it is not difficult to verify the following properties:
\begin{subequations}
\label{sta:fix-A-B}
\begin{align}
&\cpk(\s)+\cpk(\tau)=\pk(\widetilde{\pi})-1,\\
&\cda(\s)+\fix(\s)+\cda(\tau)=\lda(\widetilde{\pi}),\\
&\cdd(\s)+\cdd(\tau)+\fix(\tau)=\rdd(\widetilde{\pi}),\\
&\fix(\s)+\fix(\tau)=\lrmaxda(\widetilde{\pi})+\rlmaxdd(\widetilde{\pi}),\\
&\cyc(\s)-\fix(\s)=\fmax(\widetilde{\pi})-1,\\
&\cyc(\tau)-\fix(\tau)=\bmax(\widetilde{\pi})-1,\\
&\cyc(\s)=\LRmax(\widetilde{\pi})-1,\\
&\cyc(\tau)=\RLmax(\widetilde{\pi})-1.
\end{align}
\end{subequations}
Note that $\pk(\widetilde{\pi})-1=\val(\widetilde{\pi})$. Combining~\eqref{sta:fix-A-B} with~\eqref{equ:A_n^cyc with weight function} we obtain
 \begin{multline}
\sum_{(\s,\tau)\in \S_n^{\bigstar}}w_1(\s; \a, u_3, f,t )\,w_2(\tau; \beta, u_4, g,t)\\
=\sum_{\widetilde{\pi} \in \mathfrak{S}_{n+1}}(u_1u_2)^{{\pk}(\widetilde{\pi})}u_3^{{\lda}(\widetilde{\pi})}u_4^{{\rdd}(\widetilde{\pi})}f^{\fmax(\widetilde{\pi})-1}
g^{\bmax(\widetilde{\pi})-1}\\
\times t^{\lrmaxda(\widetilde{\pi})+\rlmaxdd(\widetilde{\pi})}{\alpha}^{{\LRmax}(\widetilde{\pi})-1}{\beta}^{{\RLmax}(\widetilde{\pi})-1},
 \end{multline}
 which  proves~\eqref{equ:A(u,s,a,b)-A^cyc(x,y,t,a)-with-s}. 
 \qed
 
 \begin{figure}[t]
\begin{tikzpicture}[scale=1.6]
\tikzstyle{node0}=[circle, inner sep=2, fill=black] 
	\tikzstyle{node1}=[rectangle, inner sep=3, fill=red] 
	\tikzstyle{node2}=[rectangle, inner sep=2, fill=red] 
\draw (0.6,0.15) rectangle (4.9,2.5);
\draw[line width=0.8,dashed] (2.7,2.5)--(2.7,0.15);

\draw[->, color=red] (1.5,1.5) to [bend left] (1.75,1.04);
\draw[->,color=red] (1.25,0.64) to [bend left] (1.45,1.5);
\draw[->,color=red] (1.77,1.02) to [bend left] (1.30,0.6);
\fill (1.5,1.5) circle (1.1pt);
\fill (1.25,0.6) circle (1.1pt);
\fill (1.75,0.99) circle (1.1pt);
\node[above] at (1.5,1.5) {$7$};
\node[below] at (1.25,0.6) {$5$};
\node[right] at (1.75,0.99) {$1$};

\fill (2.3,1.7) circle (1.1pt);
\draw[-,color=red] (2.339,1.7) to [out=-30,in=30] (2.3,1.2);
\draw [->,color=red](2.3,1.2) to [out=150,in=210] (2.26,1.69);
\node[above] at (2.3,1.7) {$9$};

\fill (1,2) circle (1.1pt);
\draw [-,color=red] (1.03,1.98) to [out=-30,in=30,] (1,1.5);
\draw [->,color=red](1,1.5) to [out=150,in=210] (0.97,1.98);
\node[above] at (1,2) {$2$};

\fill (3.3,1) circle (1.1pt);
\node[above] at (3.3,1){$8$};
\draw [-,color=blue] (3.33,0.98) to [out=-30,in=30,] (3.3,0.6);
\draw [->,color=blue](3.3,0.6) to [out=150,in=210] (3.27,0.98);

\draw[->, color=blue] (4,1.8) to [bend left] (4.5,1.24);
\draw[->,color=blue] (4.5,1.2) to [bend left] (3.85,0.94);
\draw[->,color=blue] (3.8,0.95) to [bend left] (3.97,1.76);
\fill (4,1.8) circle (1.1pt);
\fill (4.5,1.2) circle (1.1pt);
\fill (3.8,0.95) circle (1.1pt);
\node[above] at (4,1.8) {$4$};
\node[right] at (4.5,1.2) {$3$};
\node[below] at (3.8,0.95) {$6$};

\node[above] at (2.4,2.1){$\red {\sigma}$};
\node[above] at (3,2.1){$\blue{\tau}$};
\draw[-stealth] (5,1.2) to (6,1.2);

\begin{scope}[shift={(6.1,0)}]
\draw[step=0.25,lightgray,thin] (0,0) grid (2.75,2.75); 
\tikzstyle{ridge}=[draw, line width=1,  color=blue] 
\tikzstyle{ridge1}=[draw, line width=1, dotted,  color=blue] 
\path[ridge] (0.25,0.5)--(0.5,1.75)--(0.75,0.25)--(1,1.25)--(1.25,2.25)--(1.5,2.5)--(1.75,2)--(2,1)--(2.25,0.75)--(2.5,1.5); 
\path[ridge1] (0,0)--(0.25,0.5);
\path[ridge1] (2.5,1.5)--(2.75,0);
\node[node0] at (0,0) {}; 
	\node[node0] at (0.25,0.5) {}; 
	\node[node0] at (0.5,1.75) {}; 
	\node[node0] at (0.75,0.25) {}; 
	\node[node0] at (1,1.25) {}; 
	\node[node0] at (1.25,2.25) {}; 
	\node[node0] at (1.5,2.5) {}; 
	\node[node0] at (1.75,2) {}; 
	\node[node0] at (2,1) {}; 
	\node[node0] at (2.25,0.75) {}; 
	\node[node0] at (2.5,1.5) {}; 
	\node[node0] at (2.75,0) {};
	
	\node[left] at (0.25,0.5) {\red{$2$}};
	\node[above] at (0.5,1.75) {\red{$7$}}; 
	\node[right] at (0.75,0.25) {$1$}; 
	\node[right] at (1,1.25) {$5$}; 
	\node[left] at (1.25,2.25) {\red{$9$}}; 
	\node[above] at (1.5,2.5) {{\bf$10$}}; 
	\node[left] at (1.75,2) {\blue{$8$}}; 
	\node[left] at (2,1) {$4$}; 
	\node[below] at (2.25,0.75) {$3$}; 
	\node[above] at (2.5,1.5) {\blue{$6$}}; 
\end{scope}
\end{tikzpicture}
\caption{The mapping  $\rho: (\sigma,\tau)\mapsto \widetilde{\pi}=\red{2\,7\,1\,5\,9}\,{\bf10}\,\blue{8\,4\,3\,6}.$ }

\label{configuration graph}
\end{figure}
  \begin{example}
Consider the pair $(\s,\tau)\in\S_9^{\bigstar}$, where
$\s=\red{(2)(9)(5\,7\,1)}$ and $\tau=\blue{(6\,4\,3)(8)}$,
then $\theta_2(\s)=27159$ and   $\theta_1(\tau)=8436$.  Thus $\widetilde{\pi}=2\,7\,1\,5\,9\,\mathbf{10}\,8\,4\,3\,6 \in\S_{10}$, see Fig.~\ref{configuration graph}. 

As a check of \eqref{sta:fix-A-B},  we have:
\begin{itemize}
    \item at the left-hand side,
$\cpk(\s)=|\{7\}|=1,\;\cpk(\tau)=|\{6\}|=1,\; \cda(\s)=|\{5\}|=1$, $\cdd(\tau)=|\{4\}|=1$, $\fix(\s)=|\{2,9\}|=2,\; \fix(\tau)=|\{8\}|=1,\,\cyc(\s)=3$ and $\cyc(\tau)=2$; 
\item at the right-hand side, $\pk(\widetilde{\pi})=|\{6,7,10\}|=3$, $\val(\widetilde{\pi})=|\{1, 3\}|=2$, $ \lda(\widetilde{\pi})=|\{2,5,9\}|=3,\; \rdd(\widetilde{\pi})=|\{4,8\}|=2$, $
\LRmax(\widetilde{\pi})=|\{2,7,9,10\}|=4, \RLmax(\widetilde{\pi})=|\{6,8,10\}|=3$, 
$\lrmaxda(\widetilde{\pi})=|\{2,9\}|=2$,
$\rlmaxdd(\widetilde{\pi})=|\{8\}|=1$,
$\fmax(\widetilde{\pi})=|\{7,10\}|=2$ and $\bmax(\widetilde{\pi})=|\{6,10\}|=2$. 
\end{itemize}
\end{example} 
\section{Cycle André and André permutations: Proof of Theorem~\ref{main-theorem1}}
\subsection{Counting André permutations of type 1 by des and lmax}
When $x=t=1$, let
 $\gamma_{n,j}(\a):=\gamma_{n,j} (\a, 1)$ and 
\begin{subequations}
\begin{equation}
A_{n,k}(\a):=\sum_{
\mycom{\s\in \S_{n+1}}{\des(\s)=k}}\alpha^{\lrmax(\s)+\rlmax(\s)-2},
\end{equation}
then, Eq.~\eqref{gamma:A} can be rephrased as
\begin{equation}\label{gamma-alpha}
    \sum_{k=0}^nA_{n,k}(\a)y^k=\sum_{j=0}^{\lfloor n/2\rfloor}\gamma_{n,j}(\a)y^j(1+y)^{n-2j}.
\end{equation}
Considering the position of letter 1 of permutations in $\S_{n+2}$,  it is easy to verify that $A_{n,k}(\a)$ satisfy the following recurrence,
\begin{equation}\label{rec:A_{n,k}(a)}
A_{n+1,k}(\a)=(k+\a)A_{n,k}(\a)+(n+1-k+\a)A_{n,k-1}(\a),
\end{equation}
\end{subequations}
where $A_{0,0}(\a)=1$ and $A_{0,-1}(\a)=0$, for $0\le k\le n$.

\begin{lemma}
For $n\ge0$ and $0\le j\le\lfloor n/2\rfloor$,  the $\gamma$-coefficients 
$\gamma_{n,j}(\a)$ satisfy the  recurrence relation 
\begin{equation}\label{rec:gamma}
\g_{n+1,j}(\a)=(\a+j)\g_{n,j}(\a)+2(n+2-2j)\g_{n,j-1}(\a),
\end{equation}
with  $\g_{0,0}(\a)=1$ and $\g_{0,-1}(\a)=0$. Equivalently, setting $d_{n+1,j}(\a)=\g_{n+1,j}(\a)/2^j$, then
\begin{equation}\label{recurrence:d_n}
d_{n+1,j}(\a)=(\a+j)d_{n,j}(\a)+(n+2-2j)d_{n,j-1}(\a),
\end{equation}
with $d_{0,0}(\a)=1$ and $d_{0,-1}(\a)=0$.
\end{lemma}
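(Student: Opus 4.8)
The plan is to encode recurrence~\eqref{rec:A_{n,k}(a)} into the generating polynomial $P_n(y):=\sum_{k=0}^{n}A_{n,k}(\a)y^k$ and then transport it to the $\gamma$-basis. First I would multiply~\eqref{rec:A_{n,k}(a)} by $y^k$ and sum over $k$. Writing $\sum_k k A_{n,k}(\a)y^k=yP_n'(y)$ and shifting the summation index in the term carrying $A_{n,k-1}(\a)$, the two pieces collapse into the single functional recurrence
\begin{equation}
P_{n+1}(y)=y(1-y)P_n'(y)+\bigl(\a(1+y)+ny\bigr)P_n(y),
\end{equation}
with $P_0(y)=1$. This step is pure bookkeeping with the operators $y\tfrac{d}{dy}$ and multiplication by $y$, so no real difficulty arises here.

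Next I would make the change of variables already exploited in the proof of Theorem~\ref{Thm: egf of gamma coefficients}. Setting $s=\tfrac{y}{(1+y)^2}$ and $g_n(s):=\sum_j\gamma_{n,j}(\a)s^j$, the $\gamma$-expansion~\eqref{gamma-alpha} reads precisely $P_n(y)=(1+y)^n g_n(s)$; this normalization is legitimate because $P_n$ is palindromic, which follows from the symmetry of $A_n(x,y,1\,|\,\a)$ recorded earlier. Using $\tfrac{ds}{dy}=(1-y)(1+y)^{-3}$ together with the identity $1-4s=\bigl(\tfrac{1-y}{1+y}\bigr)^2$ (as in Theorem~\ref{Thm: egf of gamma coefficients}), the chain rule turns the functional recurrence above, after dividing through by $(1+y)^{n+1}$, into the clean differential recurrence
\begin{equation}
g_{n+1}(s)=(\a+2ns)\,g_n(s)+s(1-4s)\,g_n'(s).
\end{equation}
I expect this change-of-variables step, in particular tracking the chain-rule factors and simplifying $y(1-y)^2(1+y)^{-4}$ to $s(1-4s)$ and $2ny(1+y)^{-2}$ to $2ns$, to be the only delicate point of the whole argument.

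Finally I would extract the coefficient of $s^j$ on both sides. Since $s(1-4s)g_n'(s)$ contributes $j\gamma_{n,j}(\a)-4(j-1)\gamma_{n,j-1}(\a)$ to the coefficient of $s^j$, while $(\a+2ns)g_n(s)$ contributes $\a\gamma_{n,j}(\a)+2n\gamma_{n,j-1}(\a)$, collecting terms yields
\begin{equation}
\gamma_{n+1,j}(\a)=(\a+j)\gamma_{n,j}(\a)+2(n+2-2j)\gamma_{n,j-1}(\a),
\end{equation}
which is~\eqref{rec:gamma}; the initial data $\gamma_{0,0}(\a)=1$, $\gamma_{0,-1}(\a)=0$ come from $P_0(y)=1$. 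The equivalent form~\eqref{recurrence:d_n} then follows immediately by substituting $\gamma_{n,j}(\a)=2^j d_{n,j}(\a)$ and dividing through by $2^j$, the factor $2(n+2-2j)\,2^{j-1}/2^j=(n+2-2j)$ producing the stated coefficient.
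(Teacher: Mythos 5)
Your proposal is correct, and it takes a genuinely different route from the paper's. The paper proves the lemma by induction on $n$ entirely in the binomial basis: it substitutes $A_{n,k}(\a)=\sum_{j}\binom{n-2j}{k-j}\g_{n,j}(\a)$ into \eqref{rec:A_{n,k}(a)}, assumes \eqref{rec:gamma}, and reduces everything to a four-term binomial-coefficient identity which it then checks by Maple. You instead package \eqref{rec:A_{n,k}(a)} into the operator identity $P_{n+1}(y)=y(1-y)P_n'(y)+\bigl(\a(1+y)+ny\bigr)P_n(y)$, transport it through $s=y/(1+y)^2$ and $P_n(y)=(1+y)^n g_n(s)$ (legitimate, as you say, since \eqref{gamma-alpha} is already established), and read off the recurrence from $g_{n+1}(s)=(\a+2ns)g_n(s)+s(1-4s)g_n'(s)$. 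I checked the delicate step: with $\tfrac{ds}{dy}=(1-y)(1+y)^{-3}$ one gets $y(1-y)^2(1+y)^{-4}=s(1-4s)$, and the $g_n$-coefficient collapses because $ny(1-y)+\bigl(\a+(n+\a)y\bigr)(1+y)=\a(1+y)^2+2ny$; the coefficient extraction and the passage to $d_{n,j}$ are also right. Your route buys a fully hand-checkable, induction-free argument whose intermediate differential recurrence is the standard engine for producing $\gamma$-recurrences; the paper's argument stays inside the polynomial ring with only binomial coefficients, at the price of an opaque identity delegated to Maple. One point to make explicit rather than implicit: equating coefficients of $s^j$ is valid because both sides of your recurrence are polynomials in $s$ that agree at the infinitely many values $s=y/(1+y)^2$, hence agree identically.
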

\begin{proof}
We verify recurrence~\eqref{rec:gamma} by induction on $n$. Since  $\g_{2,0}(\a)=\a^2$, $\g_{1,0}(\a)=\a$ and $\g_{1,-1}(\a)=0$, this  is obvious when $n=1$.  
If $n\ge 2$ and $0\le k\le n$, 
Eq.~\eqref{gamma-alpha} is equivalent to
\begin{equation}\label{relation:A_n-gamma}
A_{n,k}(\a)=\sum_{0\le j\le k}\binom{n-2j}{k-j}\g_{n,j}(\a).
\end{equation}
Substituting~\eqref{relation:A_n-gamma} in~\eqref{rec:A_{n,k}(a)} and using~\eqref{rec:gamma}, we derive 
\begin{align}
\sum_{0\le j\le k}&\binom{n+1-2j}{k-j}\bigg((\a+j)\g_{n,j}(\a)+2(n+2-2j)\g_{n,j-1}(\a)\bigg)\nonumber\\
&=\bigg((k+\a)\sum_{0\le j\le k}\binom{n-2j}{k-j}+(n+1-k+\a)\sum_{0\le j\le k}\binom{n-2j}{k-1-j}\bigg)\g_{n,j}(\a).
\end{align}
Extracting the coefficients of $\g_{n,j}(\a)$ we obtain:
\begin{align}
(\a+j)\binom{n+1-2j}{k-j}&+2(n-2j)\binom{n-1-2j}{k-j-1}\nonumber\\
&=(k+\a)\binom{n-2j}{k-j}+(n+1-k+\a)\binom{n-2j}{k-1-j},
\end{align}
which is obvious by Maple.
\end{proof}
\begin{lemma}\label{lem:1-des-rlminda}
For $n\ge 0$,  we have
\begin{equation}
\sum_{j=0}^{\lfloor n/2\rfloor}d_{n,j}(\a)x^j=\sum_{\s\in\A_{n+1}^1}x^{\des(\s)}\a^{\lrmax(\s)-1}.
\end{equation}
\end{lemma}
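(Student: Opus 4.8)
The plan is to show that the right-hand side, regarded as a polynomial in $x$ with coefficients in $\N[\a]$, obeys exactly the recurrence and initial data of $\sum_j d_{n,j}(\a)x^j$, and then to invoke \eqref{recurrence:d_n} proved above. Concretely I would set
\[
T_{n,j}(\a):=\sum_{\mycom{\s\in\A_{n+1}^1}{\des(\s)=j}}\a^{\lrmax(\s)-1},
\]
so that the assertion is $T_{n,j}(\a)=d_{n,j}(\a)$ for all $n\ge 0$ and $0\le j\le\lfloor n/2\rfloor$. The base case is immediate, since $\A_1^1=\{1\}$ forces $T_{0,0}(\a)=1=d_{0,0}(\a)$ and $T_{0,-1}(\a)=0$. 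Thus it suffices to prove that the $T_{n,j}(\a)$ satisfy
\[
T_{n+1,j}(\a)=(\a+j)\,T_{n,j}(\a)+(n+2-2j)\,T_{n,j-1}(\a),
\]
after which the claim follows by induction on $n$ together with \eqref{recurrence:d_n}.

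To establish this recurrence I would construct every element of $\A_{n+2}^1$ from a unique element of $\A_{n+1}^1$ by inserting the new largest letter $n+2$. Since an André permutation of the first kind always ends with its largest letter (an immediate consequence of the $x$-factorization condition in Definition~\ref{def:x-factorization-André1-2}), one insertion site is distinguished: appending $n+2$ at the very end of $\s\in\A_{n+1}^1$ lands in $\A_{n+2}^1$ (the new last pair $(n+1)(n+2)$ is an ascent), preserves $\des$, and creates exactly one new left-to-right maximum, so it contributes the factor $\a$ to the descent-preserving term. The remaining admissible insertions, once the word is put back into the ``largest letter last'' normal form, keep $\lrmax$ fixed and split according to their effect on the descent number: for a source with $j$ descents there are $j$ further sites that preserve $\des$ (accounting for the summand $j\,T_{n,j}(\a)$), while a source with $j-1$ descents offers $n+2-2j$ sites that create one new descent (accounting for $(n+2-2j)\,T_{n,j-1}(\a)$). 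Because $\s$ has no double descents its descents are isolated and the final pair is always an ascent, which both forces $\des(\s)\le\lfloor n/2\rfloor$ and makes these local counts consistent; summing the weights over all sources then yields the displayed recurrence.

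The hard part will be making this insertion precise and proving it is a bijection onto $\A_{n+2}^1$ with the stated effect on the pair $(\des,\lrmax)$. Because the maximum is forced into the last position, inserting $n+2$ at any site other than the end must be combined with a canonical restructuring of the prefix, most cleanly described through the recursive $x$-factorization (André-tree) structure of Definition~\ref{def:x-factorization-André1-2}; one must check both that this restructuring preserves membership in $\A^1$ and, the genuinely delicate point, that it leaves the left-to-right maxima statistic unchanged, since naive repairs such as cycling the current maximum to the end do alter $\lrmax$. Once the classification of insertion sites into the three types and their invariance properties are verified, the counts $1$, $j$, and $n+2-2j$ fall out of the no-double-descent structure, and the induction closes, giving $T_{n,j}(\a)=d_{n,j}(\a)$ as required.
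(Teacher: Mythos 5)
Your top-level strategy is the same as the paper's: name the right-hand coefficients $T_{n,j}(\a)$, check the trivial base case, and show they satisfy the recurrence \eqref{recurrence:d_n}. But your combinatorial implementation has a genuine gap, and you concede it yourself. You propose to build $\A_{n+2}^1$ from $\A_{n+1}^1$ by inserting the new \emph{largest} letter $n+2$. Since every André permutation of the first kind ends with its maximum, any insertion other than appending at the end destroys this normal form, so your construction requires a ``canonical restructuring of the prefix'' that you never define, let alone prove to be a bijection onto $\A_{n+2}^1$, to preserve $\lrmax$, and to produce exactly $1$, $j$, and $n+2-2j$ sites of the three types. You correctly identify that naive repairs (e.g., cycling the maximum to the end) change $\lrmax$ --- but that observation is precisely why the counts $j$ and $n+2-2j$ are, in your write-up, asserted rather than derived. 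As it stands the proof of the recurrence for $T_{n,j}(\a)$ is missing its central step.

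The fix is to insert the \emph{smallest} letter instead, which is what the paper does: deleting the letter $1$ from $\s\in\A_{n+2}^1$ yields an André permutation of $\{2,\dots,n+2\}$, and no restructuring is ever needed. One then inserts $1$ into $\s\in\A_E^1$, $E=\{2,\dots,n+2\}$: inserting at the very beginning or immediately after any of the $j$ descents preserves $\des$ (since $\s_i>1<\s_{i+1}$ replaces the descent at $i$ by a descent plus an ascent), and only the insertion at the front creates a new left-to-right maximum, giving the weight $(\a+j)T_{n,j}(\a)$; inserting after an ascent position $i$ with $i-1$ not a descent (so as not to create a forbidden double descent at $\s_i$) raises $\des$ by one, and since André permutations have no double descents there are exactly $n-2(j-1)=n+2-2j$ such positions, giving $(n+2-2j)T_{n,j-1}(\a)$. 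Because $1$ is the minimum, it never affects which other letters are left-to-right maxima, so the $\a$-bookkeeping is immediate --- exactly the invariance that your largest-letter insertion cannot deliver without the unconstructed restructuring. If you replace your insertion scheme by this minimum-letter insertion, the rest of your argument (induction on $n$ via \eqref{recurrence:d_n}) goes through as written.
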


\begin{proof}
Let 
\begin{equation}
\sum_{\s\in\A_{n+1}^1}x^{\des(\s)}\a^{\lrmax(\s)-1}=\sum_{j=0}^{\lfloor n/2\rfloor}d_{n,j}^{\ast}(\a)x^j.
\end{equation}
We shall prove that $d_{n,j}^{\ast}(\a)$ satisfies the same recurrence relation~\eqref{recurrence:d_n}, i.e., 
\begin{equation}\label{recurrence:d_n^ast}
d_{n+1,j}^{\ast}(\a)=(\a+j)d_{n,j}^{\ast}(\a)+(n+2-2j)d_{n,j-1}^{\ast}(\a).
\end{equation}

The left-hand side of equation~\eqref{recurrence:d_n^ast} counts permutations in $\s\in\A_{n+2}^1$ with $j$ descents considering the weights of $\a^{\lrmax-1}$. By the fact that deleting the letter 1 in $\s\in\A_{n+2}^1$ yields also an André permutation of $\{2,3,\dots,n+2\}$. Thus, we can obtain such a permutation uniquely in one of two ways. For the first way, choose a permutation $\s=\s_1\cdots\s_{n+1}\in\A_{E}^1$ with $j$ descents where $E=\{2,3,\dots,n+2\}$, and insert $1$ after $\s_i$ if $i$ is descent, or insert 1 at the beginning. Note that letter 1 will contribute a left-to-right maximum at the beginning, while in other positions not. Thus, there are $j+1$ ways to insert 1, so we obtain by this factor $(\a+j)d_{n,j}^{\ast}(\a)$. For the second way, choose $\s=\s_1\cdots\s_{n+1}\in\A_{E}^1$ with $j-1$ descents where $E=\{2,3,\dots,n+2\}$, and insert 1 after $\s_i$ if $i$ is an ascent and $i-1$ is not a descent. Since an André permutation has no double descents, thus there are $n-2(j-1)$ ways to insert 1, so we obtain the second factor $(n+2-2j)d_{n,j-1}^{\ast}(\a)$. 
\end{proof}

Combining Eq.~\eqref{rec:gamma}, ~\eqref{recurrence:d_n} and Lemma~\ref{lem:1-des-rlminda},  we derive the following exponential generating function from~\eqref{equ2:gen-Gamma-coe}. 
\begin{theorem}
We have
\begin{equation}\label{exp-gen-func-F(x,a)}
\sum_{n\ge 0}\sum_{\s\in\A_{n+1}^1}x^{\des(\s)}\a^{\lrmax(\s)-1}\frac{z^n}{n!}=\bigg(\frac{\sqrt{2x-1}\sec(\frac{z}{2}\sqrt{2x-1})}{\sqrt{2x-1}-\tan(\frac{z}{2}\sqrt{2x-1})}\bigg)^{2\a}.
\end{equation}
\end{theorem}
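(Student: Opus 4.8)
The plan is to obtain the left-hand side directly from the generating function \eqref{equ2:gen-Gamma-coe} that has already been proved, by a single rescaling of the variable, and then to pass from the hyperbolic form to the trigonometric form by a formal substitution. First I would invoke Lemma~\ref{lem:1-des-rlminda}, which rewrites the inner sum as $\sum_{\s\in\A_{n+1}^1}x^{\des(\s)}\a^{\lrmax(\s)-1}=\sum_{j}d_{n,j}(\a)x^j$; this is precisely the identity into which the recurrences \eqref{rec:gamma} and \eqref{recurrence:d_n} have been folded. Since $d_{n,j}(\a)=\g_{n,j}(\a)/2^j$, this polynomial equals $\sum_{j}\g_{n,j}(\a)(x/2)^j=G_n(x/2,1,\a)$ in the notation of Theorem~\ref{Thm: egf of gamma coefficients}. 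Hence the entire left-hand side of the claimed identity is $1+\sum_{n\ge1}G_n(x/2,1,\a)z^n/n!$.

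Next I would apply \eqref{equ2:gen-Gamma-coe} at $t=1$ and with $x$ replaced by $x/2$. The exponential prefactor $e^{\frac12(t-1)z}$ becomes $1$, and the parameter becomes $u=\sqrt{1-4\cdot(x/2)}=\sqrt{1-2x}$, so that
\[
1+\sum_{n\ge1}G_n(x/2,1,\a)\frac{z^n}{n!}=\left(\frac{u}{u\cosh(uz/2)-\sinh(uz/2)}\right)^{2\a},\qquad u=\sqrt{1-2x}.
\]
It then only remains to match this expression with the right-hand side of the theorem.

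For the final step I would set $v=\sqrt{2x-1}$, so that $u=iv$, and use the elementary identities $\cosh(iw)=\cos w$ and $\sinh(iw)=i\sin w$. These give $u\cosh(uz/2)-\sinh(uz/2)=i\bigl(v\cos(vz/2)-\sin(vz/2)\bigr)$; the factor of $i$ cancels against the numerator $u=iv$, and dividing numerator and denominator by $\cos(vz/2)$ turns the ratio into $v\sec(vz/2)/\bigl(v-\tan(vz/2)\bigr)$, which is exactly the base appearing on the right-hand side. Raising to the power $2\a$ completes the identification.

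The computation is routine; the only point that deserves care — and the one I would flag as the mild obstacle — is that the passage $u=iv$ must be read purely formally, at the level of power series in $z$ with polynomial coefficients in $x$ and $\a$. One checks that the ratio $u/\bigl(u\cosh(uz/2)-\sinh(uz/2)\bigr)$ is even in $u$ (numerator and denominator are both odd in $u$), and likewise $v\sec(vz/2)/(v-\tan(vz/2))$ is even in $v$, so the choice of sign of the square root is immaterial and the hyperbolic-to-trigonometric passage is a genuine identity of formal power series valid for all $x$, with no branch ambiguity in $\sqrt{2x-1}$.
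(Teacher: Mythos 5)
Your proposal is correct and takes essentially the same route as the paper: the paper likewise combines Lemma~\ref{lem:1-des-rlminda} (whose proof encodes the recurrences \eqref{rec:gamma} and \eqref{recurrence:d_n}) with the generating function \eqref{equ2:gen-Gamma-coe} at $t=1$ and $x\mapsto x/2$, so that $u=\sqrt{1-2x}$, and then rewrites the hyperbolic expression in the trigonometric form in $\sqrt{2x-1}$. Your explicit observation that both ratios are odd over odd, hence even in $u$ (resp.\ in $\sqrt{2x-1}$), making the passage $u=i\sqrt{2x-1}$ a branch-free identity of formal power series, is a careful detail the paper leaves implicit.
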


\begin{remark}
When $\a=1$, \eqref{exp-gen-func-F(x,a)} was first proved by Foata and Schützenberger~\cite{FSch73},  other proofs are given  by Foata and Han~\cite{FH01}, and 
Chow and Shiu~\cite[Theorem~2.1]{CS11}.
\end{remark}
\begin{remark}
It would be interesting to extend the above approach by 
incorporating a suitable  parameter $t$. 
\end{remark}

\subsection{Counting  André cycles by drop}


The following is an alternative definition of André permutations of the first kind.
\begin{lemma}[\cite{FH16}]\label{lem:def-andre}
{\rm 
Say that the empty word e and each one-letter word are both André permutations of the first kind. Next, if $w=w_1w_2\cdots w_n$ $(n\ge 2)$ is a permutation of a set of positive integers $Y=\{y_1<y_2\cdots<y_n\}$, write $w=v\min(w)v'$. Then, $w$ is an \textbf{André permutation of the first kind}, if both $v$ and $v'$ are André permutation of the first kind, and furthermore if $\max(vv')$ is a letter of $v'$.
}
\end{lemma}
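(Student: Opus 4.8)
The plan is to prove that the recursive description in Lemma~\ref{lem:def-andre} coincides with the $x$-factorization characterisation of Definition~\ref{def:x-factorization-André1-2}, proceeding by strong induction on $n=|Y|$. Throughout I use the boundary convention $\mathbf{0-0}$ (that is $\s_0=\s_{n+1}=0$), under which the double descents of Definition~\ref{def:x-factorization-André1-2} are read. The base cases $n\le 1$ are immediate, since the empty word and every one-letter word satisfy both descriptions. For the inductive step fix a permutation $w$ of $Y$ with $n\ge 2$, set $m=\min(w)$, and write $w=v\,m\,v'$ as in the statement.

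First I would analyse the $x$-factorization at the distinguished letter $x=m$. As every other letter of $w$ exceeds $m$, the two outer words of the $m$-factorization are empty, while the inner words are exactly $\lambda(m)=v$ and $\rho(m)=v'$. Hence the requirements of Definition~\ref{def:x-factorization-André1-2} at $x=m$ read: $v=\emptyset$ whenever $v'=\emptyset$, and $\max(v)<\max(v')$ when both are nonempty. For $n\ge 2$ the first clause forces $v'\ne\emptyset$, and the two clauses together say precisely that $\max(w)=\max(vv')$ is a letter of $v'$, which is exactly the extra condition imposed in Lemma~\ref{lem:def-andre}.

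Next I would establish a localisation principle for the remaining letters and for double descents. Since $m<x$ for every $x\ne m$ and $m$ separates $v$ from $v'$, the maximal blocks $\lambda(x)$ and $\rho(x)$ of letters exceeding $x$ cannot cross the position of $m$; consequently, for a letter $x$ of $v$ (resp. of $v'$) the $x$-factorization of $w$ restricts to the $x$-factorization of the standalone word $v$ (resp. $v'$), so the condition of Definition~\ref{def:x-factorization-André1-2} holds at $x$ in $w$ if and only if it holds at $x$ in $v$ (resp. $v'$). The same separation handles double descents: a short case check at the two junctions around $m$ shows that $m$ itself is always a valley (or an initial ascent) and the first letter of $v'$ is preceded by the ascent $m<v'_1$, so neither creates a double descent, while at the last letter of $v$ the right neighbour $m$ plays the same role as the appended $0$ in the standalone $\mathbf{0-0}$ reading of $v$ (because that letter exceeds both $m$ and $0$). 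Thus $w$ has no double descent if and only if neither $v$ nor $v'$ does.

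Combining the last two paragraphs, $w$ satisfies Definition~\ref{def:x-factorization-André1-2} if and only if $v$ and $v'$ each satisfy it and $\max(w)$ lies in $v'$; invoking the induction hypothesis to replace \emph{satisfies Definition~\ref{def:x-factorization-André1-2}} by \emph{is an Andr\'e permutation of the first kind in the sense of Lemma~\ref{lem:def-andre}} for the shorter words $v$ and $v'$ then yields the claim. The main obstacle is the clean localisation of the no-double-descent condition at the junction immediately to the left of $m$: this is precisely the place where the $\mathbf{0-0}$ boundary convention is needed, since it makes the descent created by the smaller letter $m$ in $w$ coincide with the descent recorded at the end of the standalone word $v$. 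Had one instead forbidden only strictly interior double descents, this step would fail and would have to be repaired using the auxiliary invariant that every Andr\'e permutation of the first kind ends with its largest letter.
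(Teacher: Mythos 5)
Your proof is correct, but note what you are comparing against: the paper offers no proof of this lemma at all --- it is imported verbatim from Foata--Han \cite{FH16} as an alternative characterisation, so your induction is a genuine addition rather than a rival to an argument in the text. Your route is the natural one and it works: the $m$-factorization at $x=\min(w)$ has empty outer words and $\lambda(m)=v$, $\rho(m)=v'$, so the two bullet conditions of Definition~\ref{def:x-factorization-André1-2} at $x=m$ are exactly ``$v'\neq\emptyset$ and $\max(vv')\in v'$''; and since $m$ is smaller than every other letter, no block $\lambda(x)$ or $\rho(x)$ for $x\neq m$ can cross the position of $m$, so the conditions at all other letters localise to the standalone words $v$ and $v'$. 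Your junction analysis of double descents under the $\mathbf{0-0}$ convention is also right, and your closing observation correctly identifies where the boundary convention is load-bearing (and it is the convention the paper intends: $132$ is absent from Table~\ref{example André permutations1} precisely because its last letter is a double descent against the appended $0$).

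Two small refinements. First, your blanket claim ``$w$ has no double descent if and only if neither $v$ nor $v'$ does'' fails in the corner case $v'=\emptyset$, $v\neq\emptyset$, where $m$ itself is a double descent of $w$; this does no harm to the equivalence, since in the forward direction you have already derived $v'\neq\emptyset$ from the condition at $x=m$, and in the backward direction $\max(vv')\in v'$ gives it for free --- but the localisation statement should be asserted only under the hypothesis $v'\neq\emptyset$. Second, the no-double-descent clause is actually redundant given the bullet conditions: if $\s_{i-1}>\s_i>\s_{i+1}$ (including $\s_{n+1}=0$), then at $x=\s_i$ one has $\rho(x)=\emptyset$ while $\s_{i-1}\in\lambda(x)$, violating the first bullet. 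So even under an interior-only reading of double descents the equivalence would survive, because the clause $\lambda(x)=\emptyset$ whenever $\rho(x)=\emptyset$, applied at the last letter, forbids a trailing descent; your proposed ``repair'' via the invariant that André permutations end in their maximum is thus already built into the factorization conditions.
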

Define 
 the exponential generating function
 \begin{subequations}
\begin{align}\label{exp:E(x;z)}
E(x; z):=\sum_{n\ge 0}\left(\sum_{\s\in\A_{n+1}^1}x^{\des(\s)}\right)\frac{z^n}{n!}.
\end{align}
Then,  Eq.~\eqref{exp-gen-func-F(x,a)} with $\a=1$ yields
\begin{equation}\label{exp-gen-E(x,z)2}
E(x;z)=\bigg(\frac{\sqrt{2x-1}\sec(\frac{z}{2}\sqrt{2x-1})}{\sqrt{2x-1}-\tan(\frac{z}{2}\sqrt{2x-1})}\bigg)^{2}.
\end{equation}
We give a combinatorial proof of the following lemma, which  can also be derived from~\eqref{exp-gen-E(x,z)2}.
\begin{lemma}\label{lem:first-differentiating} We have 
\begin{equation}\label{equ:E(x;z)}
E(x;z)=\exp\bigg(z+x\int_0^z\bigg(\int_0^vE(x;u)\,du\bigg)dv\bigg).
\end{equation}
\end{lemma}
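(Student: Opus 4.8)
The plan is to reduce the functional equation to a first-order ODE whose content is precisely the recursive (minimum-splitting) description of André permutations in Lemma~\ref{lem:def-andre}, and then to integrate once. Set $A(x;z):=\int_0^z E(x;u)\,du=\sum_{n\ge 1}\big(\sum_{\s\in\A_n^1}x^{\des(\s)}\big)\frac{z^n}{n!}$, so that $E=\partial_z A$, $A(x;0)=0$, and $\partial_z A(x;0)=1$ (the one-letter word has no descent). Writing $b_N:=\sum_{\s\in\A_N^1}x^{\des(\s)}$, the target identity $E=\exp(z+x\iint E)=\exp\big(z+x\int_0^z A\,dv\big)$ is, after taking $\log$ and differentiating, equivalent to $\partial_z\log(\partial_z A)=1+xA$, i.e. to the explicit equation
\[
\partial_z A(x;z)=1+A(x;z)+\tfrac{x}{2}A(x;z)^2
\]
together with the initial data above; since both the functional equation and this ODE determine a unique formal power series, it suffices to establish the displayed ODE.

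Next I would prove the ODE combinatorially from Lemma~\ref{lem:def-andre}. Fix $\s\in\A_N^1$ with $N=n+1\ge 1$ and split it at its minimum, $\s=v\,m\,v'$ with $m=\min(\s)$. By Lemma~\ref{lem:def-andre} the words $v,v'$ are again André permutations of the first kind on their own letter sets, and for $N\ge 2$ the global maximum lies in $v'$. The only descent created by the splitting is the step from the last letter of $v$ down to $m$ (the step $m\to v'$ is an ascent, since $m$ is minimal), while the internal descents of $v$ and $v'$ are untouched; hence $\des(\s)=\des(v)+\des(v')+[\,v\neq\emptyset\,]$. The single-letter case contributes the constant $1$; the case $v=\emptyset$ leaves an arbitrary André permutation $v'$ of size $N-1$ and contributes the term $A$; and in the case $v\neq\emptyset$ one has $v'\neq\emptyset$, the extra descent contributes a factor $x$, and choosing the $\kappa=|v|$ letters of $v$ among the $N-2$ letters different from $m$ and from the forced global maximum yields, at the level of coefficients,
\[
b_{N}=b_{N-1}+x\sum_{\kappa=1}^{N-2}\binom{N-2}{\kappa}\,b_{\kappa}\,b_{N-1-\kappa}\qquad(N\ge 2).
\]

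Finally I would translate this recurrence into the ODE and conclude. Using the symmetry $\kappa\leftrightarrow N-1-\kappa$ of $b_\kappa b_{N-1-\kappa}$ together with Pascal's rule $\binom{N-2}{\kappa}+\binom{N-2}{\kappa-1}=\binom{N-1}{\kappa}$, the asymmetric sum above (which reflects the constraint that the maximum sits in $v'$) equals $\tfrac12\sum_{\kappa}\binom{N-1}{\kappa}b_\kappa b_{N-1-\kappa}$, and this is exactly the coefficient of $z^{N-1}/(N-1)!$ in $1+A+\tfrac{x}{2}A^2$, proving the ODE. Differentiating it gives $\partial_z^2 A=\partial_z A\,(1+xA)$, hence $\partial_z\log(\partial_z A)=1+xA=1+x\int_0^z E$; integrating from $0$ and using $\partial_z A(x;0)=1$ and $A(x;0)=0$ recovers $E=\partial_z A=\exp\!\big(z+x\int_0^z A\,dv\big)=\exp\!\big(z+x\iint E\big)$, as claimed.

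The main obstacle is the careful labeled (exponential) bookkeeping around the forced position of the global maximum: one must verify that exactly one element (the maximum) is removed from the free binomial choice, giving $\binom{N-2}{\kappa}$ rather than $\binom{N-1}{\kappa}$, and then recognize through the $\kappa\leftrightarrow N-1-\kappa$ symmetry and Pascal's identity that this asymmetric count is precisely the symmetric $\tfrac{x}{2}A^2$ term; this symmetrization is what produces the factor $\tfrac12$ and hence the clean integral form. A secondary point to check is the descent identity $\des(\s)=\des(v)+\des(v')+[\,v\neq\emptyset\,]$ at the two junctions, together with the boundary and initial conditions that fix the constant of integration.
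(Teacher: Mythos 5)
Your proposal is correct and takes essentially the same route as the paper: both split an André permutation at its smallest letter via Lemma~\ref{lem:def-andre}, with the maximum forced into the right factor, to obtain the identical recurrence $E_{n+2}(x)=E_{n+1}(x)+x\sum_{j=1}^{n}\binom{n}{j}E_j(x)E_{n+1-j}(x)$, and then conclude through a differential equation plus the initial value at $z=0$. The only difference is cosmetic: the paper translates the recurrence directly into $\partial_z E=E\bigl(1+x\int_0^z E(x;u)\,du\bigr)$, whereas you first pass (via the $\kappa\leftrightarrow N-1-\kappa$ symmetrization and Pascal's rule) to the once-integrated Riccati form $\partial_z A=1+A+\tfrac{x}{2}A^2$ for $A=\int_0^z E$ and then differentiate back, a correct but unnecessary detour.
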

\end{subequations}
\begin{proof}
Let 
\begin{subequations}
\begin{equation}
E_n(x):=\sum_{\s\in\A_{n}^1}x^{\des(\s)}.
\end{equation}
By Lemma~\ref{lem:def-andre}, 
any permutation $w\in \A_{n+2}^1$ can be written as 
$w=v\textbf{1} v'$ where $v$ and $v'$ are André permutations of the first kind. 
Let  $j+1$ be the position of $1$. Then,  classifying the permutations in $\A_{n+2}^1$ by  the index $j\in\{0,1,\dots,n\}$, we have
\begin{equation}\label{equ:recurrence-x}
E_{n+2}(x)=E_{n+1}(x)+x\sum_{j=1}^{n}\binom{n}{j}E_{j}(x)E_{n+1-j}(x),
\end{equation}
in other words,
\begin{equation}\label{equ:recurrence relation-E}
\sum_{n\ge 0}E_{n+2}(x)\frac{z^n}{n!}=\bigg(\sum_{n\ge 0}E_{n+1}(x)\frac{z^n}{n!}\bigg)\bigg(1+x\sum_{n\ge 1}E_{n}(x)\frac{z^n}{n!}\bigg).
\end{equation}
As $E(x;z)=\sum_{n\ge 0}E_{n+1}(x)\frac{z^n}{n!}$, the above equation can be written as 
\begin{equation}\label{partial-E(x;z)}
\frac{\partial E(x; z)} {\partial z} =E(x; z) \left(1+x\int_0^zE(x; u)du\right), 
\end{equation}
\end{subequations}
which is the equation obtained by 
 differentiating the two sides of \eqref{equ:E(x;z)} with respect to $z$. 
 As the two sides of \eqref{equ:E(x;z)} agree at $z=0$,  we are done.
\end{proof}

 Recall that 
a cycle $C=(a_1,\dots, a_k)$ of an ordered set $A$ is a  
an
\textbf{\emph{André cycle}} if  $a_1=\min\{a_1,\dots, a_k\}$ and 
 $a_2\ldots a_k$ is an André permutation of the first kind. 
Let $\AC_n$ be the set of  André  cycles of $[n]$.

\begin{lemma}\label{lem:cyclic-andre-drop}
We have
\begin{equation}\label{equ:exp-CA-drop}
\sum_{n\ge 2}\sum_{\s\in\AC_{n}}x^{\drop(\s)}\frac{z^n}{n!}=x\int_0^z\bigg(\int_0^tE(x;u)\,du\bigg)dt.
\end{equation}
\end{lemma}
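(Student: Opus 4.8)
The plan is to compare the coefficient of $z^n/n!$ on the two sides. On the right-hand side I would use $E(x;z)=\sum_{m\ge0}E_{m+1}(x)z^m/m!$, integrate twice and multiply by $x$, obtaining
$$
x\int_0^z\Bigl(\int_0^tE(x;u)\,du\Bigr)\,dt=\sum_{n\ge2}x\,E_{n-1}(x)\,\frac{z^n}{n!}.
$$
Hence it suffices to prove the coefficient identity
$$
\sum_{\s\in\AC_n}x^{\drop(\s)}=x\,E_{n-1}(x)=\sum_{w\in\A_{n-1}^1}x^{\des(w)+1}.
$$

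The heart of the argument is a drop-counting formula for a single André cycle. Since every element of $\AC_n$ uses all of $[n]$ and has its minimum $1$ in first position, it can be written uniquely as $(1,a_2,\dots,a_n)$, where by definition $a_2\cdots a_n$ is an André permutation of $\{2,\dots,n\}$. Reading this cycle as a permutation $\s$ with $\s(a_j)=a_{j+1}$ cyclically (so $a_{n+1}=a_1=1$), an index $a_j$ is a drop precisely when $a_j>a_{j+1}$. First I would note that $a_1=1<a_2$ contributes no drop, whereas the cyclic wraparound satisfies $a_n>1=a_1$ and is therefore always a drop; the remaining conditions $a_j>a_{j+1}$ for $2\le j\le n-1$ are exactly the descents of the word $a_2\cdots a_n$. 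This yields
$$
\drop(\s)=1+\des(a_2\cdots a_n).
$$

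Finally, the order-preserving relabelling $\{2,\dots,n\}\to[n-1]$ is descent-preserving and, since the defining conditions for an André permutation involve only relative order, restricts to a bijection between André permutations of $\{2,\dots,n\}$ and $\A_{n-1}^1$; it thereby induces a bijection $\AC_n\cong\A_{n-1}^1$. Summing $x^{\drop(\s)}=x^{1+\des(w)}$ over this bijection gives $\sum_{\s\in\AC_n}x^{\drop(\s)}=x\,E_{n-1}(x)$, which matches the extracted coefficient and completes the proof. I expect the only real (though still elementary) obstacle to be the careful bookkeeping of the drop statistic around the cyclic wraparound — in particular verifying that the single forced drop at $a_n\mapsto1$ is exactly what produces the extra factor $x$ — after which the identity is immediate. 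A quick sanity check on $n=2$ ($\s=(1,2)$, $\drop=1$, RHS coefficient $x\,E_1(x)=x$) confirms the normalisation.
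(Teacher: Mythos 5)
Your proposal is correct and follows essentially the same route as the paper: the paper's proof uses exactly your bijection (delete the leading $1$ from the cycle $(1,a_2,\dots,a_{n-1},n)$ and relabel order-preservingly, there written as $\zeta(\s)=(a_2-1)(a_3-1)\cdots(n-1)\in\A_{n-1}^1$) together with the identity $\drop(\s)=\des(\zeta(\s))+1$, and then matches coefficients with the double integral of $E(x;z)$ just as you do. Your explicit verification that the wraparound $a_n\mapsto 1$ is the unique forced drop, producing the extra factor $x$, is precisely the content the paper leaves implicit by noting the cycle ends in $n$.
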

\begin{proof}
For $\s=(1,a_2,\dots,a_{n-1}, n)\in\AC_n$ with $n\ge 2$,
let $\zeta(\s)=(a_2-1)(a_3-1)\cdots (n-1)\in\A_{n-1}^1$. It is clear that 
the mapping $\zeta: \AC_n\to\A_{n-1}^1$  a bijection satisfying
\begin{equation}
\drop(\s)=\des(\zeta(\s))+1.
\end{equation}
Therefore, 
\begin{align}
\sum_{n\ge 2}\sum_{\s\in\AC_{n}}x^{\drop(\s)}\frac{z^n}{n!}&=\sum_{n\ge 2}\sum_{\s\in\A_{n-1}^1}x^{\des(\s)+1}\frac{z^n}{n!}\nonumber\\
&=x\sum_{n\ge 0}E_{n+1}(x)\frac{z^{n+2}}{(n+2)!}.
\end{align}
By \eqref{exp:E(x;z)}, the last  series is equal to the right-hand side of \eqref{equ:exp-CA-drop}.
\end{proof}
\subsection{Proof of Theorem~\ref{main-theorem1}~\eqref{combinatorial interpretation of d-web}}
Recall  that a  permutation is a cycle André permutation
if it is a product of disjoint André cycles and the set of cycle André permutations of $[n]$ is denoted by $\Web_n$.
By Lemma~\ref{lem:cyclic-andre-drop} we have 
\begin{align}
   \sum_{n\ge 1}\sum_{\s\in \AC_n}x^{\drop(\s)}t^{\fix(\s)}
   \frac{z^n}{n!}&=
tz+\sum_{n\ge2}\sum_{\s\in\AC_n}x^{\drop(\s)}\frac{z^n}{n!}\nonumber\\
&=(t-1)z+\left(z+x\int_0^z\bigg(\int_0^tE(x;u)\,du\bigg)dt\right).
\end{align}
Combining the exponential formula and Lemma~\ref{lem:first-differentiating} we obtain
 \begin{align}\label{exponential-formula-web}
\sum_{n\ge 0}\sum_{\s\in\Web_n}x^{\drop(\s)}t^{\fix(\s)}\a^{\cyc(\s)}\frac{z^n}{n!}
=e^{\a (t-1)z} \left(E(x;z)\right)^\a.
\end{align}
Substituting \eqref{exp-gen-E(x,z)2} in \eqref{exponential-formula-web} yields
\begin{align}\label{egf-formula-web}
\sum_{n\ge 0}\sum_{\s\in\Web_n}x^{\drop(\s)}t^{\fix(\s)}\a^{\cyc(\s)}\frac{z^n}{n!}=e^{(t-1)\a z}\bigg(\frac{\sqrt{2x-1}\sec(\frac{z}{2}\sqrt{2x-1})}{\sqrt{2x-1}-\tan(\frac{z}{2}\sqrt{2x-1})}\bigg)^{2\a}.
\end{align}
On the other hand, from  Theorems~\ref{coro: gamma-expansion-A_n(x,y|a)-cyc-version} and ~\ref{Thm: egf of gamma coefficients}
  we derive the  generating function

      \begin{equation}\label{exp-dn}
\sum_{n\ge 0}\sum_{j=0}^{\lfloor \frac{n}{2}\rfloor}d_{n,j}(\a,t)x^j\frac{z^n}{n!}=
e^{(t-1)\a z}\bigg(\frac{\sqrt{2x-1}\sec(\frac{z}{2}\sqrt{2x-1})}{\sqrt{2x-1}-\tan(\frac{z}{2}\sqrt{2x-1})}\bigg)^{2\a}.
\end{equation}
Eq.~\eqref{combinatorial interpretation of d-web} then follows by comparing the above two generating functions.
\qed
\subsection{Proof of Theorem~\ref{main-theorem1}~\eqref{combinatorial interpretation of d-andre}}\label{mapping}

We  need a mapping $\phi: \Web_n\to \A_{n+1}^1$ defined as follows: let $\s=C_1C_2\ldots C_l\in \Web_n$, where 
  $C_1, \ldots, C_l$ are André cycles, ordered from   left to right in increasing order of their smallest  letters, which are  at the end of each cycle;
erasing the parentheses   in $\s$ and appending letter $n+1$ to the end results in $\phi(\s)$. 

It is not difficult  to verify that $\phi$ is a bijection (See, for example,  the proof of Theorem~4.1 in Hwang-Jang-Oh~\cite{HJO23}). Moreover, 
similar to Lemma~\ref{mapping1-properties}, we have 
\begin{equation}\label{prop:phi} 
(\drop,\fix,\cyc)\,\s=
(\des,\rlminda,\rlmin-1)\,\phi(\s).
\end{equation}
Thus, combining~\eqref{prop:phi}  with \eqref{combinatorial interpretation of d-web} we prove \eqref{combinatorial interpretation of d-andre}.\qed

\begin{example}
 If  $\s=(5,6,1)(7,4,8,2)(3)\in\Web_{8}$, then $\s':=\phi(\s)=561748239\in\A_{9}^1$. 
Moreover,
 \begin{itemize}
     \item $\drop(\s)=|\{6,7,8\}|=3$, $\fix(\s)=|\{3\}|=1$ and  $\cyc(\s)=3$;
 \item  $\des(\s')=|\{2,3,6\}|=3$, $\rlminda(\s')=|\{3\}|=1$ and $\rlmin(\s')=|\{1,2,3,9\}|=4$.
 \end{itemize}
\end{example}

\subsection{Proof of Theorem~\ref{main-theorem1}\eqref{combinatorial interpretation of d-andre2}}
We shall construct 
a bijection $\Phi: \A_n^1\to\A_n^2$,  which preserves the triple statistic $(\des, \rlminda, \rlmin)$. 
\subsubsection{Binary and  André trees}

A binary tree $T$ on a finite ordered set $S$ is  (recursively) defined as follows:
\begin{itemize}
    \item if $S=\emptyset$, then $T=\emptyset$,
    \item otherwise, $T$ is a triple 
$(T_1, r, T_2)$ with $r\in S$ and $T_1$ and $T_2$ are binary trees on $S_1$ and $S_2$, respectively such that $(\{r\}, S_1, S_2)$ is a partition of $S$ (possibly with empty $S_1$ or $S_2$). 
\end{itemize}
For the binary tree $T=(T_1, r, T_2)$ on $S$, the element $r$ is called \emph{root} of the tree, $S$ the set of \emph{nodes} or \emph{vertices} of $T$, $T_1$ (resp. $T_2$) is the left  (resp. right) subtree of $T$. A binary tree is called an increasing binary tree, if the vertices along any path from the root are increasing. A vertex of $T$ without successor is called \emph{leaf}, otherwise it is an \emph{interior vertex}. Let $\leaf(T)$ be the number of leaves of $T$. 

\begin{definition}\cite{Dis14}\label{def:andre:trees} 
    An increasing binary tree $T$ on a finite ordered set is an André tree of the first kind (resp. second kind), if 
    \begin{itemize}
    \item no vertex of $T$ has only left successor;
\item and if $x$ is an interior vertex of $T$ with left successor $y$ and right successor $z$, then $\max(T(y))<\max(T(z))$ (resp. $y>z$), where $T(a)$ denotes the the subtree rooted at $a$ and $\max(T(a))$ is the largest vertex of $T(a)$.
\end{itemize}
The set of André trees of the first kind (resp. second kind) on $[n]$ is denoted $\T_n^1$ (resp. $\T_n^2$).
\end{definition}
\begin{figure}[tp]
\centering
\includegraphics[scale=0.25]{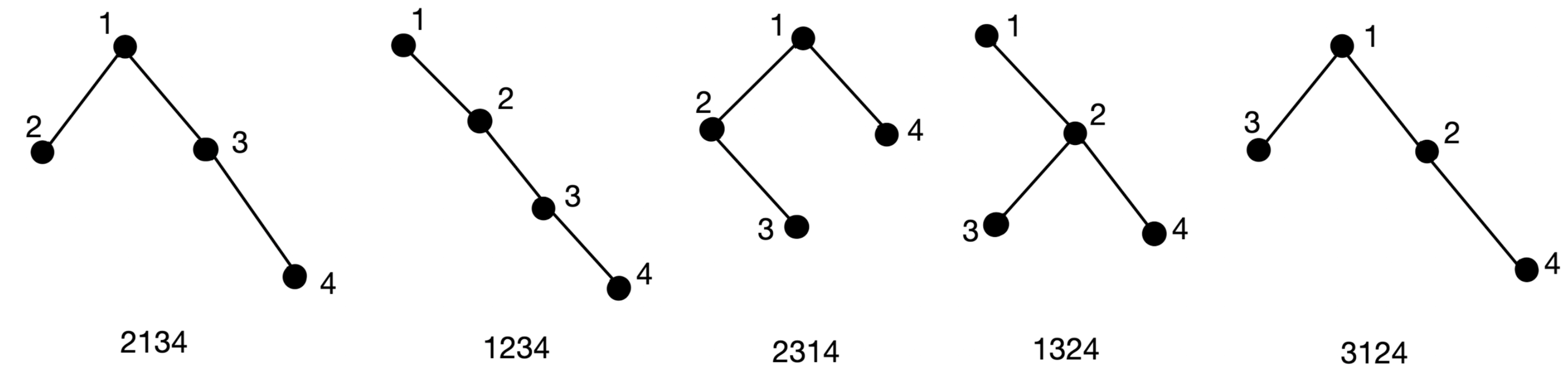}
\caption{The André trees in $\T_4^1$ and associated André permutations in $\A_4^1$.}\label{example andré1}
\end{figure}
\begin{figure}[tp]
\centering
\includegraphics[scale=0.25]{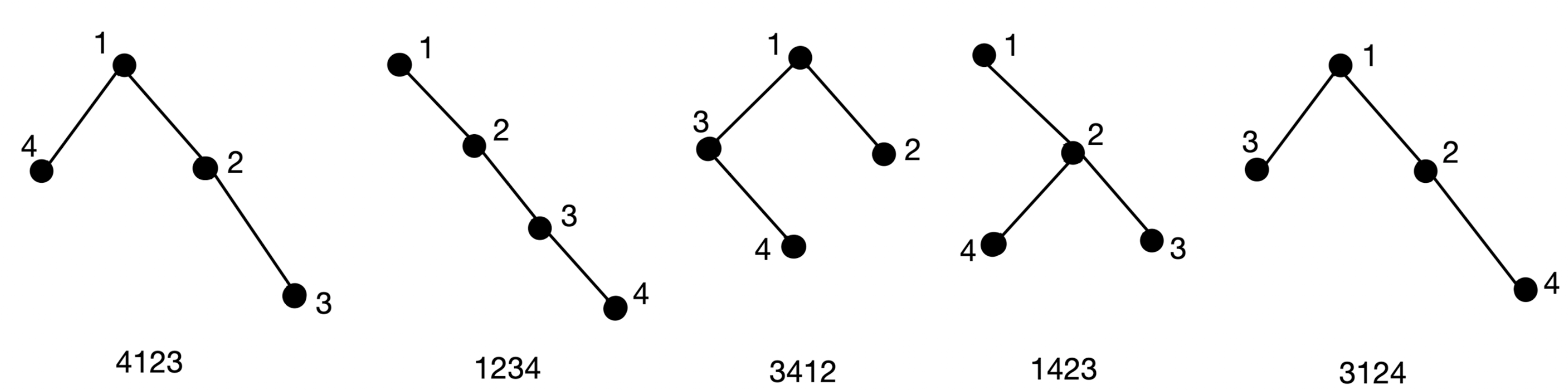}
\caption{The André trees in $\T_4^2$ and associated André permutations in $\A_4^2$.}\label{example andré2}
\end{figure}

 There is  a well-known mapping
sending permutations to increasing binary trees~\cite[Section~1.5]{St12} that we recall as in the following.  Let $\s=\s_1\dots\s_n$ be a permutation on $[n]$. Define inductively a  binary tree $T_{\s}$ as follows. 
 Let $x=\s_i$ be the least letter of $\s$ and factor  $\s$ as $\s=u\,x\,v$, where $u=\s_1\ldots \s_{i-1}$ and $v=\s_{i+1}\ldots \s_n$. 
 Now, let $T_{\s}=(T_u,x,T_v)$, where 
  $x$ is the root of $T_{\s}$, and $T_{u}$ and $T_{v}$ are  the left and right subtrees of $x$.

The mapping  $\omega: \s\mapsto T_{\s}$ is a bijection  from  $\S_n$ 
to increasing binary trees on $[n]$.   
To recover $\s$ from $T_{\s}$ we use the 
symmetric order traversal (inorder traversal), i.e.,  a tour of the vertices  of $T_{\s}$ obtained by applying the recursive algorithm: visit in symmetric order the left subtree of the root (if it exists); visit the root of the tree; visit in symmetric order the right subtree of the root (if it exists). 
\begin{example}
The André trees in  $\T_4^1$ or $\T_4^2$ and their associated  permutations are given in Fig.~\ref{example andré1} and~\ref{example andré2}.   
\end{example}

Given an increasing binary tree $T$, the \emph{right face} of  $T$ is the path from the root   to a leaf by performing only right-steps. Denote by $\rsho(T)$ the number of vertices in the right face of $T$. Let $\rsho'(T)$ be the number of interior vertices with only right successor in the right face of $T$. For example, in Fig.~\ref{example-rshou}, we have $\rsho'(T_{\s})=|\{4,8\}|=2$ and $\rsho(T_{\s})=|\{1,3,4,5,8,14\}|=6$.

It is easy to verify the following properties, some of them are in~\cite[Setion~1.5]{St12}.

\begin{lemma}\label{lem:binary-trees}
    Let $\s=\s_1\dots\s_n\in\S_n$ and $T_{\s}:=\omega(\s)$, with $\s_0=\s_{n+1}=0$. Then,
\begin{itemize}
    \item  letter $i$ is a double ascent of $\s$, iff vertex  $i$ of $T_{\s}$ has only  right successor;
    \item  letter $i$ is a double descent of $\s$, iff vertex $i$ of $T_{\s}$ has only  left successor;
    \item  letter $i$ is a valley of $\s$, iff vertex $i$ of $T_{\s}$ has  left and right successors;
    \item  letter $i$ is a peak of $\s$, iff vertex $i$ of $T_{\s}$ is a leaf;
    \item   letter $i$ is a right-to-left minimum of $\s$, iff vertex $i$ is in the right face of $T_{\s}$.
    \end{itemize}
\end{lemma}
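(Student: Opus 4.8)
The plan is to reduce all five assertions to a single structural fact about the map $\omega$: for a letter $i$ sitting in position $k$, i.e.\ $i=\s_k$, whether the vertex $i$ of $T_{\s}$ has a left (resp.\ right) child is governed by the inorder neighbour $\s_{k-1}$ (resp.\ $\s_{k+1}$). The point is that reading $T_{\s}$ in symmetric (inorder) order returns $\s$, so the inorder predecessor and successor of the vertex $i$ are exactly $\s_{k-1}$ and $\s_{k+1}$, where we use the boundary values $\s_0=\s_{n+1}=0$. Since $\omega$ produces an \emph{increasing} binary tree, every descendant of a vertex exceeds it and every proper ancestor is smaller; this monotonicity is the only property of $T_{\s}$ I shall need.

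The key claim is that \emph{vertex $i$ has a left child iff $\s_{k-1}>i$, and a right child iff $\s_{k+1}>i$}. For the left case, recall the standard description of the inorder predecessor: if $i$ has a left child, its predecessor is the rightmost vertex of that left subtree, hence a descendant of $i$ and so $>i$ by monotonicity; if $i$ has no left child, its predecessor is the nearest ancestor $a$ of which $i$ lies in the right subtree (or, in the boundary case, the value $\s_0=0$), and such an $a$ is a proper ancestor, hence $a<i$. Thus $\s_{k-1}>i$ precisely when $i$ has a left child. The right case is symmetric, using the inorder successor together with $\s_{n+1}=0$.

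Combining the claim with the definitions (under the boundary convention $\s_0=\s_{n+1}=0$) settles the first four bullets by a two-by-two analysis of the pair $(\s_{k-1}>i,\ \s_{k+1}>i)$. A double ascent $\s_{k-1}<i<\s_{k+1}$ means no left child but a right child, i.e.\ vertex $i$ has only a right successor; a double descent $\s_{k-1}>i>\s_{k+1}$ means a left child but no right child, i.e.\ only a left successor; a valley $\s_{k-1}>i<\s_{k+1}$ means both children are present; and a peak $\s_{k-1}<i>\s_{k+1}$ means neither child is present, i.e.\ $i$ is a leaf.

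It remains to identify the right face. A vertex lies on the right face exactly when it is reached from the root by right-steps only, equivalently when it is \emph{not} contained in the left subtree of any of its ancestors. The vertices that follow $i$ in inorder are exactly the right subtree of $i$ together with, for each ancestor $a$ whose left subtree contains $i$, the vertex $a$ and its right subtree. If $i$ is on the right face there is no such ancestor, so every later vertex is a descendant of $i$ and hence larger than $i$; that is, every letter to the right of $i$ exceeds $i$, so $i$ is a right-to-left minimum. Conversely, if $i$ is off the right face, choose an ancestor $a$ whose left subtree contains $i$: then $a$ follows $i$ in inorder yet $a<i$, so $i$ fails to be a right-to-left minimum. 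This proves the last bullet. The only step needing care is this description of the inorder successors of a vertex; once it is in hand, every assertion is a direct consequence of the increasing property of $T_{\s}$ and of $\omega$ being inorder-inverse to $\s$.
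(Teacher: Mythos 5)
Your proof is correct. The paper offers no proof of this lemma---it is dismissed as ``easy to verify'' with a pointer to Stanley's EC1, Section~1.5---and your argument is exactly the standard verification that is being left implicit: the key claim that vertex $i=\sigma_k$ has a left (resp.\ right) child iff $\sigma_{k-1}>i$ (resp.\ $\sigma_{k+1}>i$), derived from the inorder predecessor/successor description together with the increasing property of $T_\sigma$, settles the first four bullets by the two-by-two case analysis, and your inorder-suffix description of the vertices following $i$ correctly yields the right-face characterization of right-to-left minima, with the boundary convention $\sigma_0=\sigma_{n+1}=0$ handled consistently throughout.
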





\begin{proposition}\label{prop:statistics-keep}
For $i=1,2$, the mapping $\omega:\A_{n}^i\to\T_n^i$ is a bijection. Moreover, 
\begin{equation}
(\des,\rlminda,\rlmin)\,\s =(\leaf-1, \rsho',\rsho)\,T_{\s},
\label{andre1-triple}
\end{equation}
where  $\s\in\A_{n}^i$ and  
 $T_{\s}=\omega(\s)$.
\end{proposition}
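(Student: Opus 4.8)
The plan is to leverage the bijection $\omega$ from $\S_n$ to the set of increasing binary trees on $[n]$ together with the dictionary recorded in Lemma~\ref{lem:binary-trees}, thereby reducing the whole statement to a vertex-by-vertex comparison between the recursive (minimum-splitting) construction of $T_\s$ and the André conditions phrased through $x$-factorizations.

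The key preliminary step I would establish is the following bridge: for every letter $x=\s_i$, the words $\lambda(x)$ and $\rho(x)$ in its $x$-factorization $\s=u\,\lambda(x)\,x\,\rho(x)\,v$ are exactly the left and right subtrees of the vertex $x$ in $T_\s$. Indeed, $\lambda(x)\,x\,\rho(x)$ is precisely the maximal contiguous block of $\s$ in which $x$ is the minimum---the letters bordering it are smaller than $x$---and this is the factor that $\omega$ attaches to the subtree $T(x)$, with $x$ splitting it into its left part $\lambda(x)$ and right part $\rho(x)$. Hence the left (resp.\ right) child of $x$ equals $\min\lambda(x)$ (resp.\ $\min\rho(x)$), while $\max\lambda(x)=\max T(\text{left child})$ and $\max\rho(x)=\max T(\text{right child})$.

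Granting this bridge, the bijection assertion is a translation of the three defining clauses of Definition~\ref{def:x-factorization-André1-2}. The clause ``$\lambda(x)=\emptyset$ if $\rho(x)=\emptyset$'', which is just the absence of double descents, says that no vertex has only a left successor---the first axiom for both $\T_n^1$ and $\T_n^2$. For the first kind, ``$\max\lambda(x)<\max\rho(x)$ when both are nonempty'' becomes $\max T(y)<\max T(z)$ for the left child $y$ and right child $z$, the defining inequality of $\T_n^1$; for the second kind, ``$\min\rho(x)<\min\lambda(x)$'' becomes $z<y$, i.e.\ $y>z$, the defining inequality of $\T_n^2$. Since $\omega$ is already a bijection on all of $\S_n$ and each translation is an equivalence holding vertex by vertex, $\omega$ restricts to a bijection $\A_n^i\to\T_n^i$ for $i=1,2$.

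For the statistic identity I would read off each coordinate from Lemma~\ref{lem:binary-trees}. Right-to-left minima correspond to right-face vertices, so $\rlmin(\s)=\rsho(T_\s)$; a right-to-left-minimum double ascent is a right-face vertex that is a double ascent, i.e.\ a right-face vertex with only a right successor (necessarily interior), whence $\rlminda(\s)=\rsho'(T_\s)$. For the first coordinate, double descents correspond to only-left-successor vertices, of which $\A_n^i$ has none, so $\rdd(\s)=0$ and thus $\des(\s)=\val(\s)=\pk(\s)-1$ by \eqref{relation:V and W}; since peaks correspond to leaves, $\pk(\s)=\leaf(T_\s)$, giving $\des(\s)=\leaf(T_\s)-1$. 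The one genuinely delicate point I anticipate is the bridge lemma---verifying that the maximal block on which $x$ is minimal coincides with the two maximal runs of letters exceeding $x$ flanking it, and that $\omega$'s recursion assigns exactly this block to $T(x)$; once this is in place, the rest is a routine matching of conditions and a coordinatewise reading of the tree dictionary.
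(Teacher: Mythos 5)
Your proposal is correct and follows essentially the same route as the paper: the paper's proof likewise declares the restriction of $\omega$ to $\A_n^i$ a bijection onto $\T_n^i$ (citing the definitions, Lemma~\ref{lem:binary-trees}, and the literature) and then reads off the statistics exactly as you do, via $\dd(\s)=0\Rightarrow\des(\s)=\pk(\s)-1=\leaf(T_\s)-1$ together with $\rlmin(\s)=\rsho(T_\s)$ and $\rlminda(\s)=\rsho'(T_\s)$. The only difference is that you spell out the ``bridge'' identifying $\lambda(x)$ and $\rho(x)$ in the $x$-factorization with the left and right subtrees of $x$ in $T_\s$ --- a correct and worthwhile elaboration of the step the paper dismisses as clear.
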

\begin{proof}  
In Definition~\ref{def:x-factorization-André1-2}, we define  André permutations by the so-called $x$-factorizations of permutations.
In view of Definition~\ref{def:andre:trees} and Lemma~\ref{lem:binary-trees}, it is clear (see~\cite{Dis14,HR98,FH01})
that 
restricting the mapping $\omega$ on $\A_n^1$ (resp. $\A_n^2$) sets a bijection from $\A_n^1$ (resp. $\A_n^2$) to $\T_n^1$ (resp. $\T_n^2$). It remains to verify 
\eqref{andre1-triple}. We only prove the $i=1$ case. 
If $\s\in\A_n^1$, 
then 
$\dd(\s)=0$, so (see \eqref{relation:V and W})
\[
\des(\s)=\val(\s)  +\rdd(\s)=
\pk(\s)-1=\leaf(T_{\s})-1;
\]  
besides, it follows from  Lemma~\ref{lem:binary-trees} that   $\rlmin(\s)=\rsho(T_{\s})$ and $\rlminda(\s)=\rsho'(T_{\s})$.
\end{proof}

\begin{figure}[tp]
\centering
\includegraphics[scale=0.12]{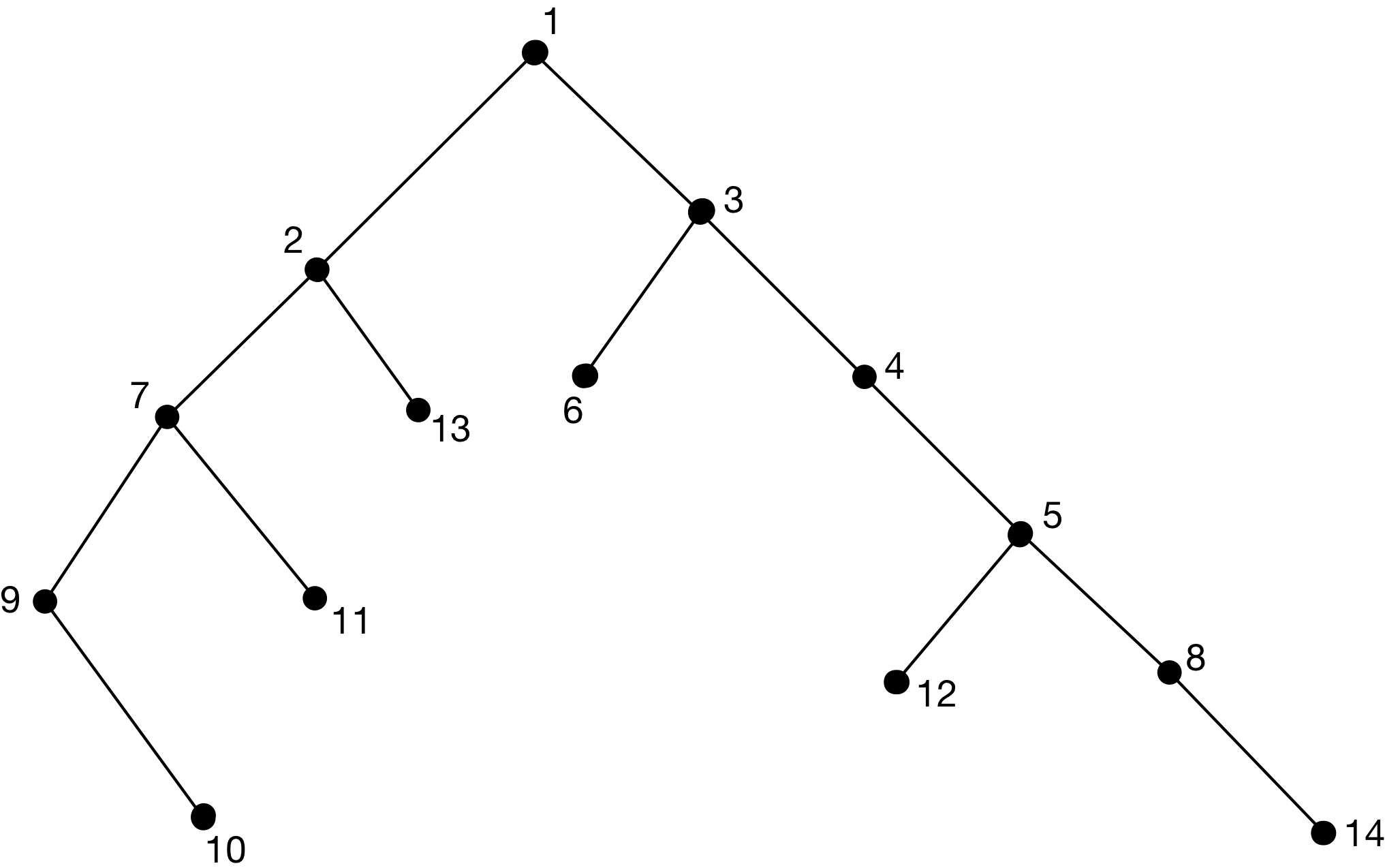}
\caption{The binary tree $T_\s$ with $\s={9\,10\,7\,11\,2\,13\,1\,6\,3\,4\,12\,5\,8\,14}\in\A_{14}^1$.}\label{example-rshou}
\end{figure}
\begin{example}
    If $\s=9\,10\,7\,11\,2\,13\,1\,6\,3\,4\,12\,5\,8\,14\in\A_{14}^1$, then the binary tree $T_{\s}$ is given in Fig.~\ref{example-rshou}. 
    Moreover,
    \begin{itemize}
        \item $\des(\s)=|\{2,4,6,8,11\}|=5$, $\rlminda(\s)=|\{4,8\}|=2$,  and\\
        $\rlmin(\s)=|\{1,3,4,5,8,14\}|=6$;
        \item  $\leaf(T_{\s})=|\{6,10,11,12,13,14\}|=6$, $\rsho'(T_{\s})=|\{4,8\}|=2$, and \\
        $\rsho(T_{\s})=|\{1,3,4,5,8,14\}|=6$.
         \end{itemize}
\end{example}

\subsubsection{Construction of bijection $\Phi$} First of all,
for $\s=\s_1\dots\s_n\in \A_n^1$, let $T_\s=\omega(\s)\in \T_n^1$ and
 \begin{equation}\label{S-T-set}   
S_{T}=\{i:  \text{vertex $\s_i$ has  2 successors in  $T_{\s}$} \}.
  \end{equation}

 If $i\in S_T$, let 
  $\s_j$ and $\s_k$ be the left  and  right successors of $\s_i$.   We define operator $\phi_i$ on the vertices of $T_\s$ as in the following:
\begin{itemize}
    \item if $\s_j>\s_k$,  then  $\phi_iT_{\s}=T_{\s}$; 
    \item if $\s_j<\s_k$. then exchange $\s_j$ with $\max(T(\s_k))$, and rearrange the vertices of $T(\s_j)$ and $T(\s_k)$ so that they keep the same relative order, respectively. 
\end{itemize}
\begin{example}
If $\s=4\,7\,2\,8\,1\,3\,6\,5\,9\,10$, then  $T_{\s}=\omega(\s)$ is shown at the left of  
    Fig.~\ref{example: phi}. 
    As $\s_5=1$ we have $\phi_5(T_{\s})=\omega(\tau)$ with 
$\tau=7\,8\,4\,10\,1\,2\,5\,3\,6\,9$.
\end{example}

\begin{figure}[tp]
\centering
\includegraphics[scale=0.1]{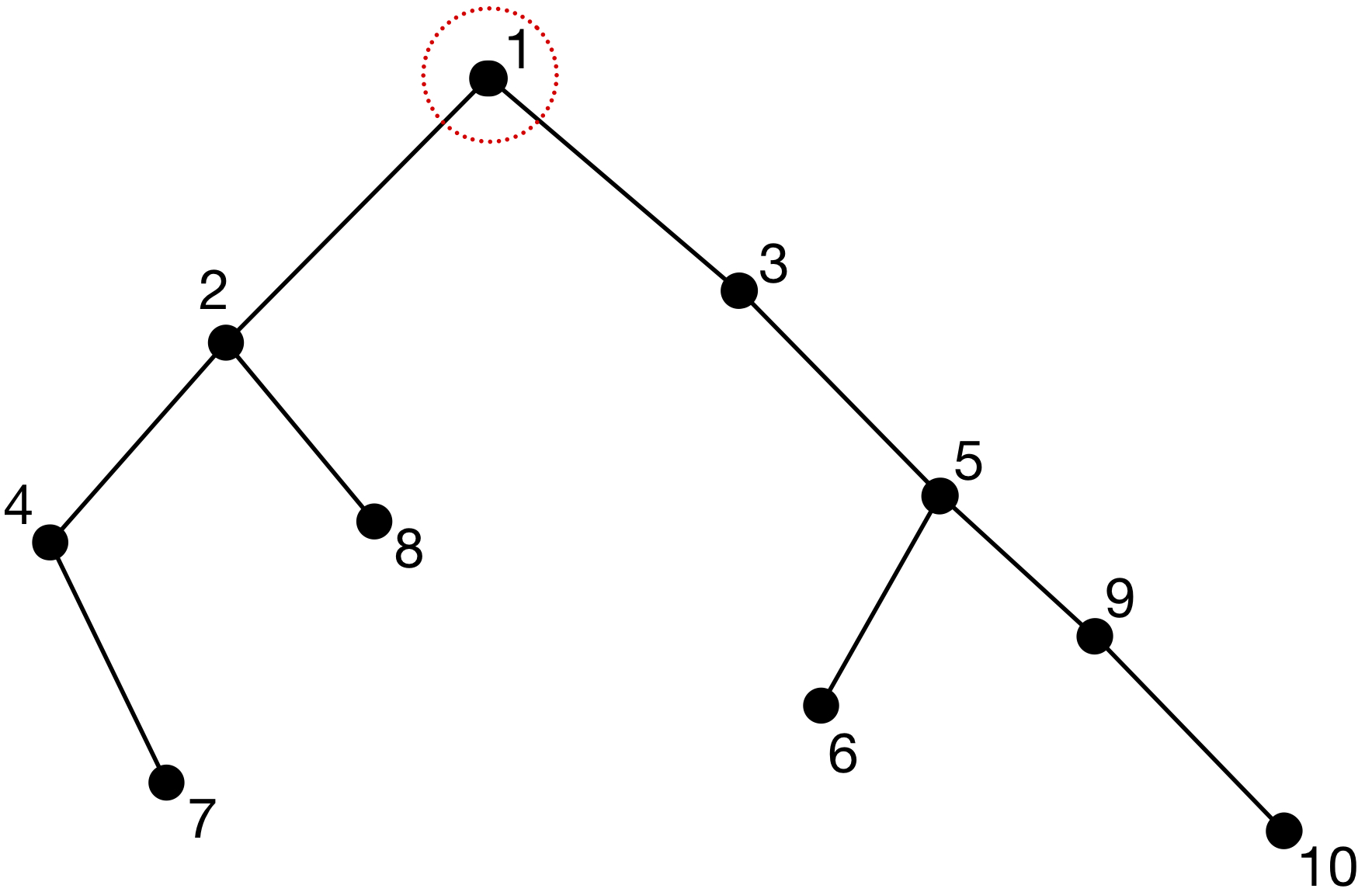}
\includegraphics[scale=0.1]{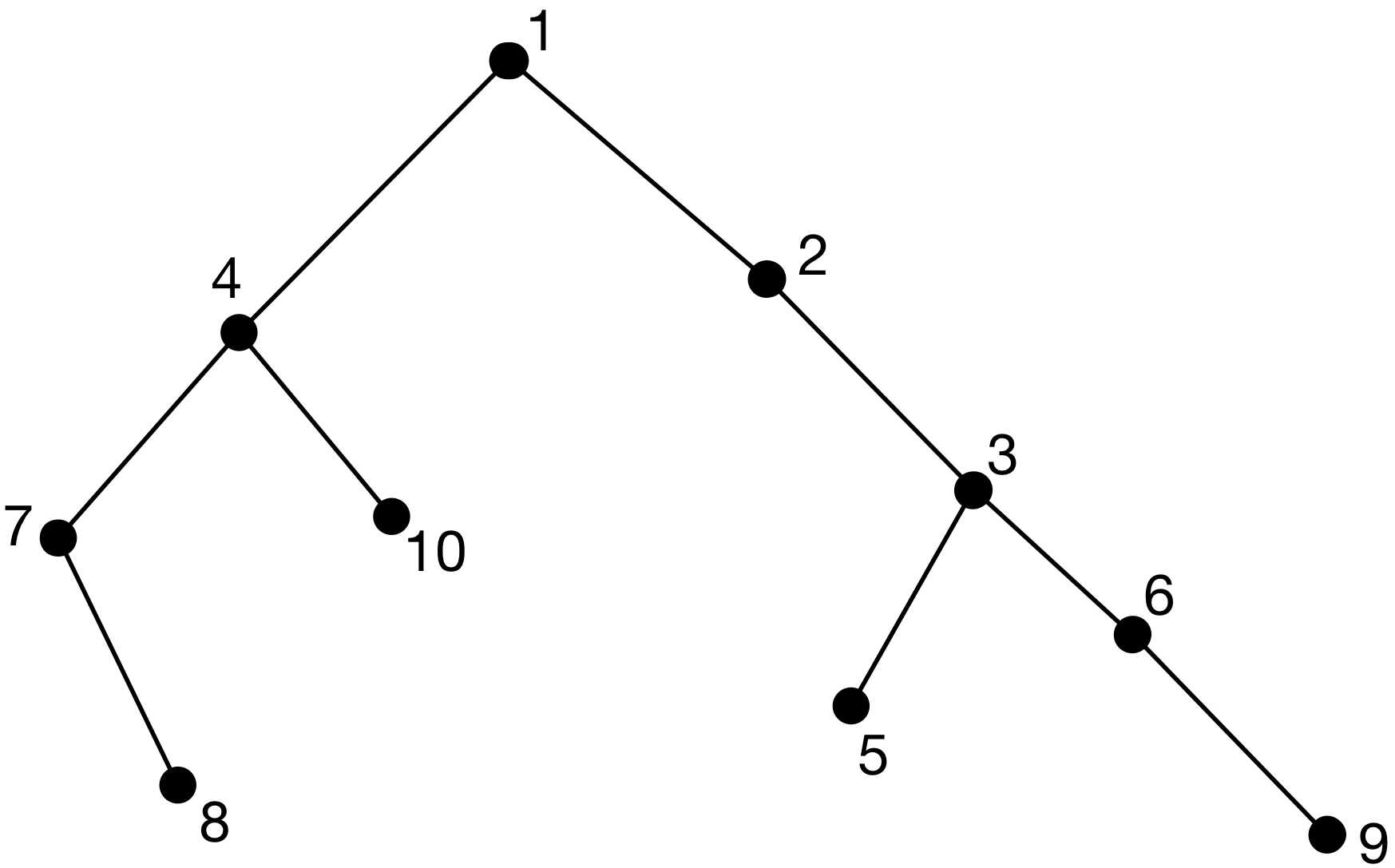}
\caption{(Left) André tree $T_{\s}\in\T_{10}^1$ with $\s=4\,7\,2\,8\,1\,3\,6\,5\,9\,10\in \A_{10}^1$
and the transformed tree $\phi_{5}(T_\s)=\omega(\tau)$ with $\tau={7\,8\,4\,10\,1\,2\,5\,3\,6\,9}.$}\label{example: phi}
\end{figure}
\begin{lemma}\label{lem:comm}
 Let  $T_{\s}\in\T^1_n$ with  $\s=\s_1\dots\s_n\in\A_n^1$.
 Then,  for any $i,j\in S_{T}$, the operators $\phi_i$ and $\phi_j$ commute, namely, 
 \begin{equation}\label{equ-commute}   \phi_i(\phi_j(T_{\s}))=\phi_j(\phi_i(T_{\s})).
 \end{equation}
\end{lemma}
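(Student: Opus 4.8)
The plan is to make the action of $\phi_i$ completely explicit as a shape-preserving relabeling, and then to compare $\phi_i\phi_j$ with $\phi_j\phi_i$ according to the relative position of the two vertices in $T_{\s}$. First I would record the following reformulation of the definition. If $x$ is a vertex of $T_{\s}$ with both children, write $L_x,R_x$ for its left and right subtrees and $A_x,B_x$ for their label sets; since $T_{\s}$ is increasing, the two children of $x$ are $\min A_x$ and $\min B_x$. Then $\phi_x$ fixes $x$ and every vertex outside $T(x)$, and on $L_x\cup R_x$ it is the identity when $\min A_x>\min B_x$, while when $\min A_x<\min B_x$ it relabels $L_x$ by the unique order-preserving bijection onto $(A_x\setminus\{\min A_x\})\cup\{\max B_x\}$ and relabels $R_x$ by the unique order-preserving bijection onto $(B_x\setminus\{\max B_x\})\cup\{\min A_x\}$. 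In either case $\phi_x$ preserves the shape of $T_{\s}$ and fixes the total label set $A_x\cup B_x$; moreover, as a transformation of labelings it is \emph{equivariant under order-preserving relabeling}, since its decision and its two relabelings are expressed only through comparisons (the minima of the two child-subtrees and the maximum of the right child-subtree) and order-preserving rearrangements. I would isolate this equivariance as a small sublemma, as it is used in both directions below.

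Because $\phi_i$ preserves the shape of $T_{\s}$, the set $S_{T}$ (which is read off the positions, hence the shape alone) is unchanged, so $\phi_j$ remains defined after applying $\phi_i$ and the identity \eqref{equ-commute} makes sense. Next I would split into two cases. If neither of the vertices $\s_i,\s_j$ is an ancestor of the other, then the subtrees $T(\s_i)$ and $T(\s_j)$ are disjoint; $\phi_i$ reads and alters labels only inside $T(\s_i)$ and $\phi_j$ only inside $T(\s_j)$, so the two operators have disjoint supports and commute trivially.

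The remaining, genuinely interactive, case is when one vertex is an ancestor of the other; by the symmetry of the claim I may assume $\s_i$ is a strict ancestor of $\s_j$, so that $T(\s_j)$ lies entirely inside one subtree of $\s_i$, say inside $L_i$ (the case $T(\s_j)\subseteq R_i$ is identical, using that the relabeling of $R_i$ is order-preserving as well). Here the argument has two ingredients. First, $\phi_j$ acts only within $T(\s_j)\subseteq L_i$ and preserves the label set of $T(\s_j)$; hence it preserves $A_i$, leaves $R_i$ and $B_i$ untouched, and in particular does not change $\min A_i$, $\min B_i$, or $\max B_i$. Consequently the decision made by $\phi_i$ and the two order-preserving relabelings $f$ (on $L_i$) and $g$ (on $R_i$) it performs are exactly the same whether or not $\phi_j$ has already been applied. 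Second, the restriction of $f$ to $T(\s_j)$ is order-preserving, so by the equivariance sublemma $f\circ\phi_j=\phi_j\circ f$ on $T(\s_j)$. Combining the two ingredients, both $\phi_i\phi_j(T_{\s})$ and $\phi_j\phi_i(T_{\s})$ are seen to equal the tree obtained by applying $f\circ\phi_j$ on $T(\s_j)$, $f$ on $L_i\setminus T(\s_j)$, $g$ on $R_i$, and the identity on $\s_i$ and outside $T(\s_i)$; this proves \eqref{equ-commute}.

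The main obstacle is exactly this nested case: one must check that the relabeling built into $\phi_i$ does not corrupt the data that $\phi_j$ reads, and conversely that $\phi_j$ does not disturb the quantities $\min A_i,\min B_i,\max B_i$ that drive $\phi_i$. Both points reduce to the single structural observation above, namely that each $\phi_x$ refers to its subtree only through order comparisons and order-preserving rearrangements and is therefore equivariant under order-preserving relabeling. Once that observation is in place, the verification in both directions is routine bookkeeping on disjoint pieces of the tree.
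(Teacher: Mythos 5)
Your proof is correct, and in the only nontrivial case it runs on a genuinely different engine than the paper's. The case split is the same in both arguments (disjoint subtrees versus one vertex an ancestor of the other, with the disjoint case immediate from disjoint supports), but the paper settles the nested case by brute force: it writes out the two relabelings as explicit shift substitutions \eqref{phi_j}--\eqref{phi-j-i} and \eqref{phi-i-j}, using the André condition of $\T_n^1$ (the inequalities $a_m<b_l$ and $a_{t_p}<a_{h_q}$) to pin down the explicit order-preserving shifts, and then compares the bottom rows. You instead isolate the structural fact that each $\phi_x$ is a shape-preserving relabeling defined purely order-theoretically --- its decision compares the two child labels $\min A_x$ and $\min B_x$, and its action is the pair of \emph{unique} order-preserving bijections onto $(A_x\setminus\{\min A_x\})\cup\{\max B_x\}$ and $(B_x\setminus\{\max B_x\})\cup\{\min A_x\}$ --- hence is equivariant under order-preserving relabelings; combined with the observation that the inner operator $\phi_j$ only permutes the label set of $T(\s_j)$ and so cannot disturb the data $\min A_i$, $\min B_i$, $\max B_i$ read by the outer operator, this gives \eqref{equ-commute} with no computation. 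Your route buys three things: it handles the identity cases uniformly (the paper dismisses them as trivial, though one direction does need exactly your order-preservation remark to see that $\phi_i$ stays inactive after $\phi_j$); it never uses the André property, so commutativity holds on arbitrary increasing binary trees --- in particular your argument is insensitive to the fact that the intermediate tree $\phi_j(T_\s)$ need no longer lie in $\T_n^1$; and the same equivariance applies verbatim to the inverse operators $\phi_i^{-1}$, for which the paper must invoke a separate ``similar argument'' in the proof that $\Psi$ is a bijection. What the paper's computation buys is an explicit formula for the composite relabeling, but nothing downstream depends on it; if you write your version up, just promote the equivariance observation to the stated sublemma (uniqueness of the order-preserving bijection between finite sets of equal size is the whole proof) and note explicitly, as you do, that shape-preservation keeps $S_T$ fixed so that iterated application is well defined.
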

\begin{proof}
If $\phi_i$ (or $\phi_j$) is an identity, then Eq.~\eqref{equ-commute} is trivial. 
Otherwise, 
without loss of generality, 
we assume that $\s_i>\s_j$. 
If  $\s_i$ is not a descendant of $\s_j$,  then $\phi_i$ and $\phi_j$ operate on  two disjoint vertex subsets of $T_{\s}$ and the commutativity is obvious. 
In what follows, if $a$ is an interior vertex of $T_{\s}$, we denote by  $l_{a}$ and $r_{a}$ the left and right successors of $a$.
We suppose  that $\s_i$ is a descendant of $l_{\s_j}$, and
\begin{itemize}
    \item the vertex set  of $T(l_{\s_j})$ (resp. $T(r_{\s_j})$) is $A_{l_{\s_j}}:=\{a_1<a_2<\cdots<a_m\}$ (resp. $B_{r_{\s_j}}:=\{b_1<b_2<\cdots<b_l\}$) with 
    $a_1=l_{\s_j}$ (resp. $b_1=r_{\s_j}$) and $a_m<b_l$;
    \item the vertex set of $T(l_{\s_i})$ (resp.  $T(r_{\s_i})$) is  $A_{l_{\s_i}}:=\{a_{t_1}<a_{t_2}<\cdots < a_{t_p}\}$  (resp.  $A_{r_{\s_i}}:=\{a_{h_1}<a_{h_2}\cdots <a_{h_q}\}$) with $a_{t_1}=l_{\s_i}$ (resp.  $a_{h_1}=r_{\s_i}$).  Note that these two sets are  subsets of $A_{l_{\s_j}}$ and $a_{t_p}<a_{h_q}$.
\end{itemize}

\begin{enumerate}
    \item 
We firstly consider $\phi_i(\phi_j(T_{\s}))$.
By definition, the vertices  of the $T(l_{\s_j})$ and $T(r_{\s_j})$ are changed according to the following substitutions,  
\begin{equation}\label{phi_j} 
\begin{array}{cccccc}
    a_1 & a_2 &a_3 & \cdots &a_{m-1} &a_m\\
    \downarrow & \downarrow &\downarrow& \cdots &\downarrow &\downarrow\\
    a_2 & a_3 & a_4& \cdots &a_m &b_l
\end{array}\,\text{and}\,
\begin{array}{cccccc}
    b_1 & b_2 &b_3 & \cdots &b_{l-1} &b_l\\
    \downarrow & \downarrow &\downarrow& \cdots &\downarrow &\downarrow\\
    a_1 & b_1 & b_2& \cdots &b_{l-2} &b_{l-1}
\end{array}.
\end{equation}
In particular, as $A_{l_{\s_i}}$ and $A_{r_{\s_i}}$ are subsets of $A_{l_{\s_j}}$, under $\phi_j$ the vertices of $T(l_{\s_i})$ and $T(r_{\s_i})$ are changed as follows,
\begin{equation}
    \begin{array}{ccccc}
    a_{t_1} & a_{t_2} & \cdots &a_{t_{p-1}} &a_{t_p}\\
    \downarrow & \downarrow &\cdots &\downarrow &\downarrow\\
    a_{t_1+1} & a_{t_2+1} & \cdots &a_{t_{p-1}+1} & a_{t_{p}+1}
\end{array}\,\text{and}\,
\begin{array}{ccccc}
    a_{h_1} & a_{h_2} & \cdots &a_{h_{q-1}} &a_{h_q}\\
    \downarrow & \downarrow &\cdots &\downarrow &\downarrow\\
    a_{h_1+1} & a_{h_2+1} & \cdots &a_{h_{p-1}+1} & a_{h_{q}+1}
\end{array},
\end{equation}
where if $a_{h_q}=a_m$, then $a_{h_q+1}=b_l$.
Next, proceeding by $\phi_i$, the veritices of $\phi_j(T(l_{\s_i}))$ and $\phi_j(T(r_{\s_i}))$ are changed  according to
\begin{equation}\label{phi-j-i}
    \begin{array}{ccccc}
     a_{t_1+1} & a_{t_2+1} & \cdots &a_{t_{p-1}+1} & a_{t_{p}+1}\\
    \downarrow & \downarrow &\cdots &\downarrow &\downarrow\\
    a_{t_2+1} & a_{t_3+1} & \cdots &a_{t_{p}+1} & a_{h_{q}+1}
\end{array}\,\text{and}\,
\begin{array}{ccccc}
    a_{h_1+1} & a_{h_2+1} & \cdots &a_{h_{q-1}+1} & a_{h_{q}+1}\\
    \downarrow & \downarrow &\cdots &\downarrow &\downarrow\\
    a_{t_1+1} & a_{h_1+1} & \cdots &a_{h_{q-2}+1} & a_{h_{q-1}+1}
\end{array}.
\end{equation}
\item
Now, we consider $\phi_j(\phi_i(T_{\s}))$. Under $\phi_i$, the vertices of $T(l_{\s_i})$ and $T(r_{\s_i})$ are changed according to the following substitutions, 
\begin{equation}
    \begin{array}{ccccc}
    a_{t_1} & a_{t_2} & \cdots &a_{t_{p-1}} &a_{t_p}\\
    \downarrow & \downarrow &\cdots &\downarrow &\downarrow\\
    a_{t_2} & a_{t_3} & \cdots &a_{t_{p}} & a_{h_{q}}
\end{array}\,\text{and}\,
\begin{array}{ccccc}
    a_{h_1} & a_{h_2} & \cdots &a_{h_{q-1}} &a_{h_q}\\
    \downarrow & \downarrow &\cdots &\downarrow &\downarrow\\
    a_{t_1} & a_{h_1} & \cdots &a_{h_{q-2}} & a_{h_{q-1}}
\end{array}.
\end{equation}
Note that the remaining  vertices are not changed under $\phi_i$. Next, we proceed by $\phi_j$, the vertices of $\phi_i(T(l_{\s_j}))$ and $\phi_i(T(r_{\s_j}))$ are changed by~\eqref{phi_j}, then the vertices of $\phi_i(T(l_{\s_i}))$ and $\phi_i(T(r_{\s_i}))$ are changed as follows  
\begin{equation}\label{phi-i-j}
    \begin{array}{ccccc}
    a_{t_2} & a_{t_3} & \cdots &a_{t_{p}} & a_{h_{q}}\\
    \downarrow & \downarrow &\cdots &\downarrow &\downarrow\\
    a_{t_2+1} & a_{t_3+1} & \cdots &a_{t_{p}+1} & a_{h_{q}+1}
\end{array}\,\text{and}\,
\begin{array}{ccccc}
    a_{t_1} & a_{h_1} & \cdots &a_{h_{q-2}} & a_{h_{q-1}}\\
    \downarrow & \downarrow &\cdots &\downarrow &\downarrow\\
    a_{t_1+1} & a_{h_1+1} & \cdots &a_{h_{q-2}+1} & a_{h_{q-1}+1}
\end{array}.
\end{equation}
\end{enumerate}
Comparing the bottom rows~\eqref{phi-j-i} and~\eqref{phi-i-j}, we prove~\eqref{equ-commute}.
\end{proof}

\begin{figure}[tp]
\centering
\includegraphics[scale=0.1]{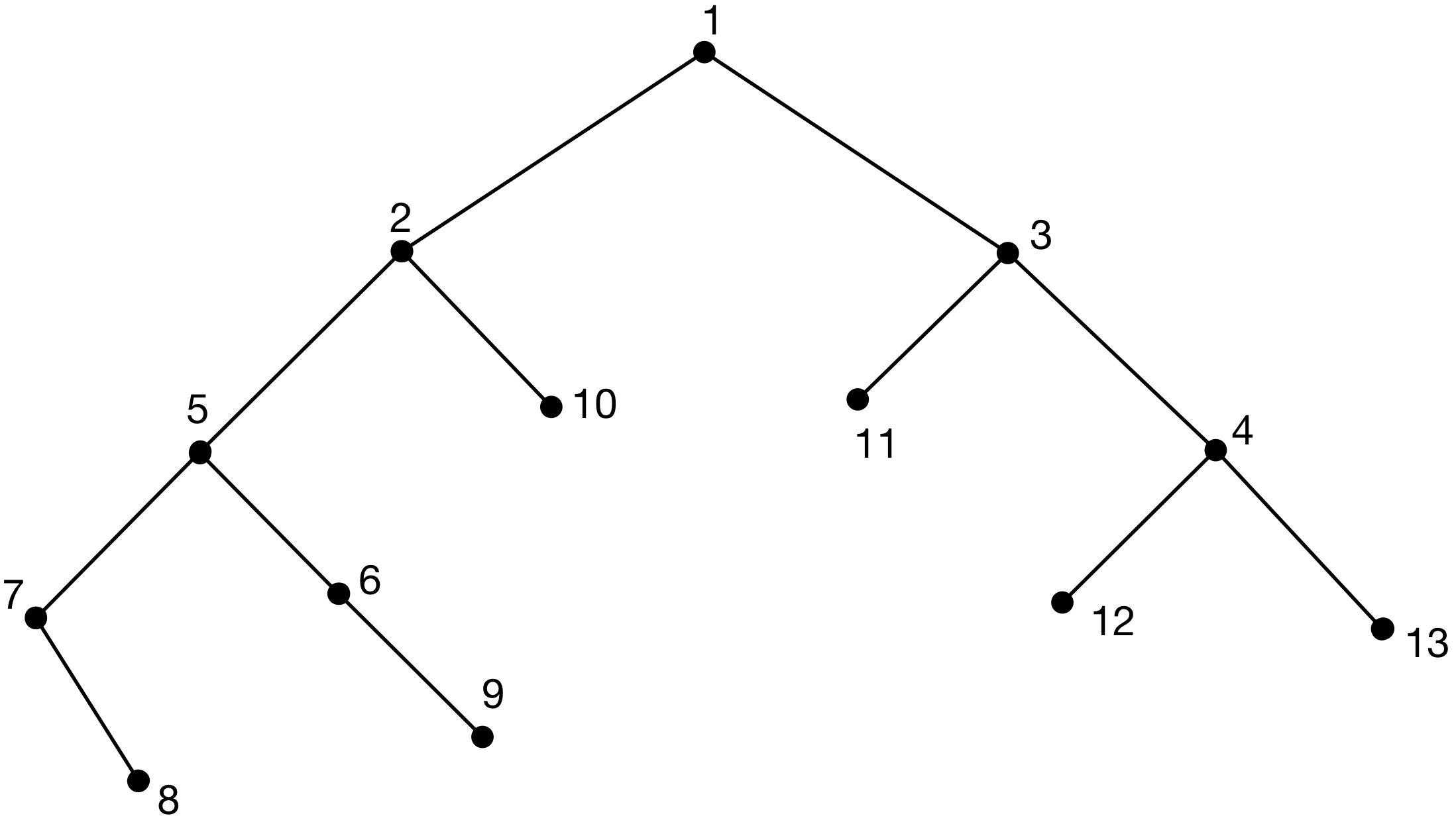}
\includegraphics[scale=0.1]{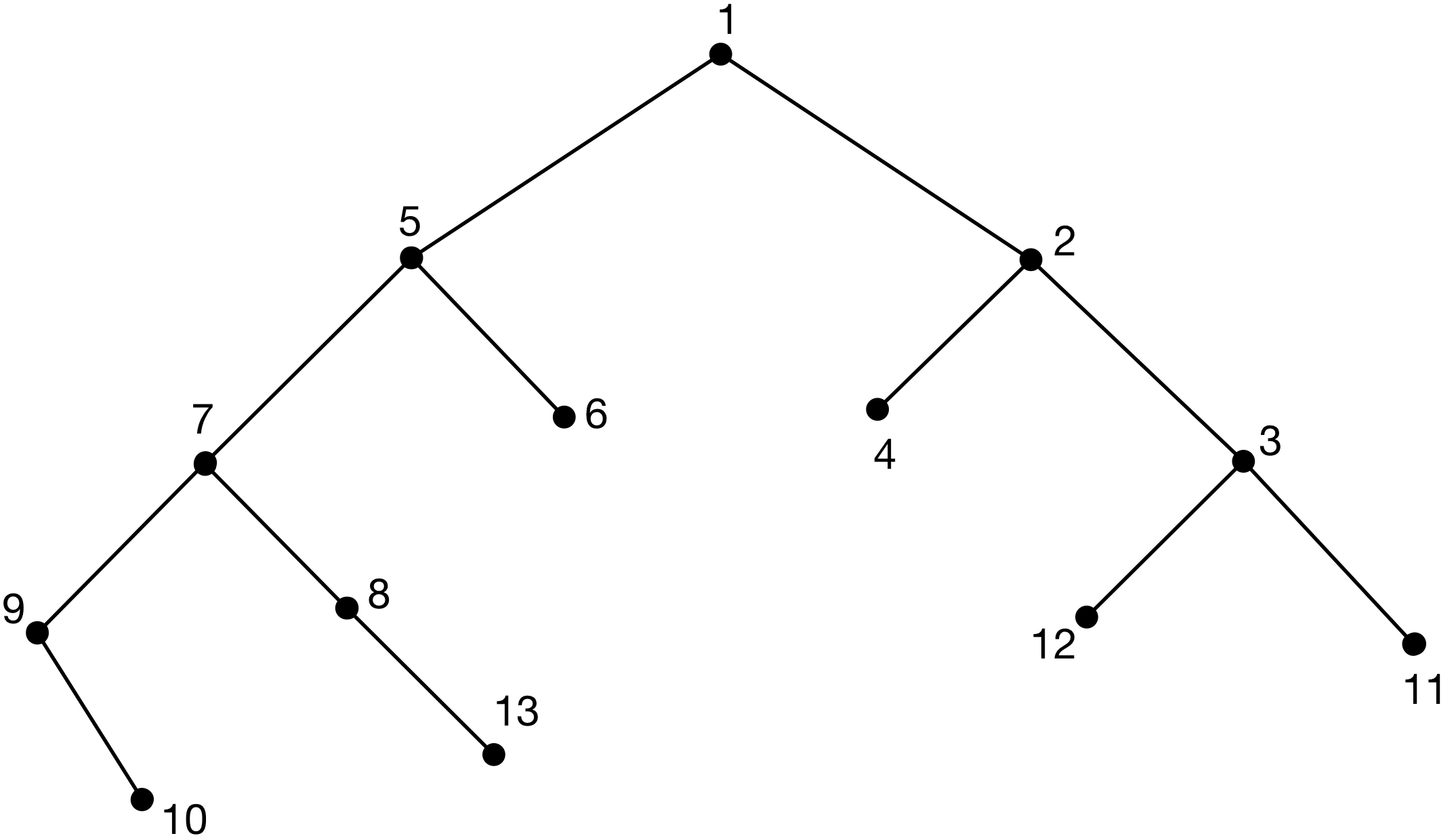}
\caption{(Left) the André tree $T_{7\,8\,5\,6\,9\,2\,10\,1\,11\,3\,12\,4\,13}\in\T_{13}^1$; (Right) the transformed tree $T_{9\,10\,7\,8\,13\,5\,6\,1\,4\,2\,11\,3\,12}\in\T_{13}^2$ under $\Psi=\prod_{i\in S_{T}}\phi_i$, where $S_{T}=\{3,6,8,10,12\}$.}\label{example: mapping}
\end{figure} 

By lemma~\ref{lem:comm}, we can define the operator $\Psi:=\prod_{i\in S_T}\phi_i$ for any $T\in \T_n^1$.

\begin{lemma}
    The operator $\Psi$ is a bijection from $\T_n^1$ to $\T_n^2$.  Moreover, 
    if $T_{\s}=\omega(\s)$ with  $\s\in\A_{n}^1$, then
\begin{equation}\label{thm:two-trees}
(\leaf, \rsho',\rsho)\,T_{\s}=(\leaf, \rsho',\rsho)\,\Psi(T_{\s}).
\end{equation} 
\end{lemma}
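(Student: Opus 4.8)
The plan is to treat the two claims separately, starting from the observation that each elementary operator $\phi_i$ preserves the underlying unlabelled shape of the tree. By construction $\phi_i$ only permutes the labels lying inside the subtree rooted at $\s_i$, redistributing them between the two branches of $\s_i$ and re-sorting each branch to restore the increasing condition; it never alters which vertex has which children, only the labels they carry. Since $\leaf$, $\rsho$ and $\rsho'$ are statistics of the shape alone (the number of leaves, the number of right-face vertices, and the number of right-face vertices with only a right child), they are fixed by every $\phi_i$, hence by $\Psi=\prod_{i\in S_T}\phi_i$. This yields \eqref{thm:two-trees} immediately.

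The real content is that $\Psi$ is a bijection $\T_n^1\to\T_n^2$. First I would verify $\Psi(T)\in\T_n^2$, i.e. that the type-2 inequality (left child $>$ right child) holds simultaneously at every two-child vertex. By Lemma~\ref{lem:comm} the operators commute, so I may apply them in order of decreasing depth, deepest first. Two facts decouple the vertices. An operator $\phi_{i'}$ sitting strictly below $i$ only permutes labels inside the two branches of $\s_i$ and so leaves their label \emph{sets} unchanged; hence when $\phi_i$ is finally applied, the branch maxima are still those of the original $T$ and the inherited type-1 inequality $\max(\mathrm{left})<\max(\mathrm{right})$ is available. A short computation then shows that $\phi_i$ moves $\min(S)$ into the right branch (where $S$ is the combined label set of the two branches), making it the right root, while the new left root is strictly larger; thus the type-2 inequality holds at $\s_i$ after $\phi_i$. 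Conversely an operator $\phi_{i'}$ sitting above $i$ relabels the branch containing $i$ monotonically, preserving the relative order of the two children of $\s_i$, so it cannot destroy an inequality already installed. Running the deepest-first schedule therefore makes $\Psi(T)$ type-2 everywhere; along the way one checks routinely that every intermediate tree is still an increasing binary tree.

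For bijectivity I would isolate the local picture: with the shape and one two-child vertex fixed, the operation is a redistribution of the labels $S$ of the two branches, and the type-1 admissible distributions are exactly those with $\max(S)$ in the right branch, the type-2 admissible ones exactly those with $\min(S)$ in the right branch. In these terms $\phi_i$ fixes the left-branch label set $A$ when $\min(S)\notin A$ and replaces it by $(A\setminus\{\min S\})\cup\{\max S\}$ when $\min(S)\in A$, which is visibly a bijection from the type-1 admissible to the type-2 admissible distributions; its inverse swaps the roles of $\min$ and $\max$, i.e. acts trivially when $\max(S)$ is already in the right branch and otherwise exchanges $\max(S)$ with the root of the right branch. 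Assembling these local inverses into $\Psi^{-1}=\prod_{i\in S_T}\phi_i^{-1}$ on $\T_n^2$ and rerunning the depth-ordered argument (again using Lemma~\ref{lem:comm}) gives $\Psi^{-1}\circ\Psi=\mathrm{id}$ and $\Psi\circ\Psi^{-1}=\mathrm{id}$.

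The main obstacle is precisely the first half of the bijectivity step: ensuring that all the local type-2 inequalities hold at once, despite the fact that an ancestor operation does relabel a vertex's children. The resolution is the pair of observations above—descendant operators preserve branch label-sets, so the type-1 data needed to analyse $\phi_i$ survives until $\phi_i$ acts, while ancestor operators act monotonically within branches and so preserve already-installed inequalities—together with the commutativity of Lemma~\ref{lem:comm}, which frees us to fix any convenient order of application. The remaining subset computations and the check that increasingness is maintained are routine.
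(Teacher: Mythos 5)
Your proposal is correct, and most of its skeleton coincides with the paper's: the same local operators $\phi_i$, the same appeal to Lemma~\ref{lem:comm} to fix a convenient order of application, the same explicit inverse operators $\phi_i^{-1}$ (your description --- act trivially when $\max(S)$ is already in the right branch, otherwise exchange $\max(S)$ with the right root --- is exactly the paper's definition), and the same observation that every $\phi_i$ only permutes labels on a fixed shape, so the shape statistics $(\leaf,\rsho',\rsho)$ are untouched, which is all the paper says about \eqref{thm:two-trees}. The genuine divergence is in how bijectivity is concluded. The paper proves only $\Psi^{-1}\circ\Psi=\mathrm{id}$, i.e.\ injectivity, and then finishes by the enumerative fact $|\T_n^1|=|\T_n^2|=E_n$, cited from Disanto and Foata--Sch\"utzenberger, so surjectivity is imported from outside. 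You instead build a two-sided inverse: you verify that $\Psi^{-1}$ actually lands in $\T_n^1$ and that $\Psi\circ\Psi^{-1}=\mathrm{id}$, at the cost of rerunning your depth-ordered analysis with the roles of $\min$ and $\max$ exchanged, but with the benefit of a self-contained argument that needs no counting of André trees. Your deepest-first scheduling argument --- descendant operators preserve the two branch label-sets at a vertex, ancestor operators relabel branches order-preservingly, hence the type-1 data needed by $\phi_i$ survives until $\phi_i$ acts and already-installed type-2 inequalities are never destroyed --- is also a welcome expansion of a step the paper compresses into a single sentence asserting $\Psi(T_{\s})\in\T_n^2$ ``by Definition~\ref{def:andre:trees} and the construction of $\Psi$''; likewise your local reformulation of $\phi_i$ as the set map fixing the left label-set $A$ when $\min(S)\notin A$ and sending $A$ to $(A\setminus\{\min S\})\cup\{\max S\}$ otherwise makes the local invertibility transparent, where the paper only asserts it. In short: same machinery, but your finishing move trades the paper's external cardinality shortcut for a constructive surjectivity proof, and each route is sound.
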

\begin{proof}
    
   By Definition~\ref{def:andre:trees} and the construction of $\Psi$,  for each interior vertex  of $\Psi(T_{\s})$ with two successors, its left successor is greater than its right successor, i.e., $\Psi(T_{\s})\in\T_{n}^2$. 

We first show that  $\Psi$ is an injection.  Let   ${T_{\widehat{\s}}}:=\Psi(T_{\s})\in\T_n^2$, with  
$$S_{\widehat{T}}=\{i: \widehat{\s}_i\,\, \text{is a vertex of  $T_{\widehat{\s}}$ with 2 successors} \}.$$ 
If $i\in S_{T}$, let $\widehat{\s}_j$  and $\widehat{\s}_k$ be the left and right successors of $\widehat{\s}_i$.  
  We define $\phi_i^{-1}$  as follows:
\begin{itemize}
    \item 
if $\max (T(\widehat{\s}_j))<\max (T(\widehat{\s}_k))$, then $\phi_i^{-1}T_{\widehat{\s}}=T_{\widehat{\s}}$; 
\item if $\max (T(\widehat{\s}_j))>\max (T(\widehat{\s}_k))$, then exchange $\widehat{\s}_k$ with  $\max(T(\widehat{\s}_j))$, 
and rearrange the vertices of $T(\widehat{\s}_j)$ and $T(\widehat{\s}_k)$,  so that they keep their same relative orders. 
\end{itemize}
 By similar argument  as in the proof of Lemma~\ref{lem:comm}, we can prove that operators $\phi_{i}^{-1}$ and $\phi_j^{-1}$ commute. 
 Since  the operator $\Psi$ just permutes the labels of vertices of $T_{\s}$, we have $S_T=S_{\widehat{T}}$.
Define $\Psi^{-1}:=\prod_{i\in S_{T}}\phi_i^{-1}$, then  
$$
\Psi^{-1}(\Psi(T_{\s}))=\prod_{i\in S_{T}}(\phi_i^{-1}\circ\phi_i)(T_{\s})=\mathbf{id}(T_{\s})=T_{\s}.
$$
Thus, $\Psi$ is an injection from $\T_{n}^1$ to $\T_{n}^2$. As $|\T_{n}^1|=|\T_{n}^2|=E_n$, see~\cite{Dis14,FSch73}, hence $\Psi$ is a bijection. By the above argument, Eq.~\eqref{thm:two-trees} is clear.
\end{proof}

\begin{proof}[\textbf{Proof of Theorem~\ref{main-theorem1}~\eqref{combinatorial interpretation of d-andre2}}]
 It is clear that $\Phi:=\omega^{-1}\circ \Psi\circ \omega$ is a bijection from
 $\A_n^1$ to $\A_n^2$.
 Combining~\ref{thm:two-trees} with Proposition~\ref{prop:statistics-keep}, we prove~\eqref{combinatorial interpretation of d-andre2}. 
\end{proof}

\begin{example}
If $\s=7\,8\,5\,6\,9\,2\,10\,1\,11\,3\,12\,4\,13\in \A_{13}^1$, then 
   $T_{\s}\in\T_{13}^1$, see Fig.~\ref{example: mapping}. As  
   $S_{T}=\{3,6,8,10,12\}$,  let  $\Psi=\prod_{i\in S_{T}}\phi_i$ we obtain  $\Psi(T_\s)=T_\tau\in\T_{13}^2$ and finally
   $$
   \tau=\omega^{-1}\circ \Psi\circ \omega(\s)={9\,10\,7\,8\,13\,5\,6\,1\,4\,2\,11\,3\,12}\in \A_{13}^2.
   $$  
\end{example}

\begin{remark}
    Foata and Han gave another bijetion from $\A_n^1$ to $\A_n^2$ in~\cite[Section~3]{FH16}, but their bijection does not keep the triple statistic $(\des,\rlminda,\rlmin)$. 
\end{remark}
\section{Link to successions}

Let $\sigma\in\S_n$, 
 an index $i\in [n-1]$ is  called a
\begin{itemize}
\item  \emph{succession} ($\suc$) of $\s$ if $\sigma(i+1)=\sigma(i)+1$;
\item  \emph{big ascent} ($\basc$)  of $\s$ if $\sigma(i+1)\geq \sigma(i)+2$.
\end{itemize}
 Let $\Suc(\sigma)$ and $\Basc(\sigma)$  be the sets of successions and  big ascents
of $\sigma$, respectively.
 Furthermore, we define the sets
\begin{itemize}
\item $\Basc_B(\sigma):
=\{\sigma(i+1)\,|\, \s(i)+1<\s(i+1), \; i\in [n-1]\}$,
\item $\Des_B(\sigma):
=\{\sigma(i+1)\,|\, \s(i)>\s(i+1), \; i\in [n-1]\}$, 
\item $\Suc_B(\sigma):=\{\sigma(i+1)\,|\, 
\s(i)+1=\s(i+1),\, i\in [n-1]\}$.
\end{itemize} 
For  $\sigma\in \S_n$ we define 
\begin{align*}
\widehat{\Fix}(\sigma)&:=\{i\,|\, \sigma(i)=i, \; i\in \{2, \ldots, n\}\},\\
\widehat{\Exc_v}(\sigma)&:=\{\s(i)\,|\, \sigma(i)>i, \; i\in \{2, \ldots, n\}\},\\
\Drop_v(\sigma)&:=\{\s(i)\,|\,  \sigma(i)<i, \; i\in [n]\},
\end{align*}
and denote the corresponding cardinalities by 
$\widehat{\fix}(\sigma)$, $\widehat{\exc}(\sigma)$ and 
$\drop(\sigma)$, respectively.

We recall a bijection 
$\varphi$  in \cite[Section 5]{CHZ97}. 
For $\sigma=\sigma_1\sigma_2\ldots \sigma_n\in \S_n$ we set 
$\s'=\sigma_2\ldots\sigma_n\sigma_1$ with $\s(i)=\s_i$ for $i\in [n]$, and 
define  
$\varphi(\sigma)=\theta_2(\s')$, where $\theta_2$ is the second variant of \textbf{FFT}, see Section~\ref{cyclic valley-hopping}.

\begin{example}
 If $\sigma=1\,4\,2\,8\,3\,6\,7\,5\,9\in { \S}_{9}$, then
$\s'=4\,2\,8\,3\,6\,7\,5\,9\,\mathbf{1}$ and 
\[
\widetilde{\s'}=(1\,4\,3\,8\,9)(5\,6\,7)(2),
\quad \varphi(\sigma)=1\,4\,3\,8\,9\,5\,6\,7\,2.
\]
We have 
$
\widehat{\Exc}_v(\sigma)=\{4,8\}$, $\Drop_v(\sigma)=\{2,3,5\}$, $\widehat{\Fix}(\sigma)=\{6,7,9\}$,
$\Basc_B(\varphi(\s))=\{4,8\}$, 
$\Des_B(\varphi(\s))=\{2,3,5\}$, and $ \Suc_B(\varphi(\s))=\{6,7,9\}$.
\end{example}
A pair of consecutive integers $(i,i+1)$ is called 
a \emph{cyclic succession} of $\s\in \S_n$ if $\s(i)=i+1$ with $i\in [n-1]$.
\begin{theorem}\label{sucfix} The mapping 
$\varphi$ is a bijection on 
${\S}_n$ such that for all $\sigma\in{\S}_n$, $\varphi(\sigma)(1)=\s(1)$ and 
\begin{equation}\label{properties-rho}
    (\widehat{\Exc}_v, \Drop_v, \widehat{\Fix})\,\sigma=(\Basc_B, \Des_B, \Suc_B)\,\varphi(\s).
\end{equation}
    In particular, we have
$
(\widehat{\exc}, \drop, \widehat{\fix})\,\s=(\basc, \des, \suc)\,\varphi(\s).
$
\end{theorem}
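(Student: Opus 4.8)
The plan is to factor $\varphi$ as a composition of two bijections and then transport the three value-sets through it. Writing $s\colon\s\mapsto\s'$ for the cyclic shift of the one-line word, we have $\varphi=\theta_1\circ s$, where $\theta_1$ is the fundamental transformation of Lemma~\ref{mapping1-properties} (the variant appearing in the worked example, which writes each cycle with its largest letter at the end and orders the cycles by decreasing maxima). Since $s$ is a bijection of $\S_n$ (its inverse sends $w_1\cdots w_n$ to $w_nw_1\cdots w_{n-1}$) and $\theta_1$ is a bijection, $\varphi$ is a bijection. For the identity $\varphi(\s)(1)=\s(1)$ I would examine the $\s'$-cycle containing $n$: because $\s'(n)=\s_1=\s(1)$, this cycle contains the arc $n\to\s(1)$, so when it is written with its largest letter $n$ last, its first letter is $\s(1)$; being the block of largest maximum it is listed first, whence $\tau(1)=\s(1)$, where $\tau:=\varphi(\s)$.

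The key reduction is that both triples partition one and the same ground set. Assigning to each position $p\ge 2$ its value $\s(p)$ shows that $\widehat{\Exc}_v(\s)$, $\Drop_v(\s)$, $\widehat{\Fix}(\s)$ are pairwise disjoint (a value is an excedance value, a drop value, or a fixed point according to whether $\s(p)>p$, $\s(p)<p$, or $\s(p)=p$), with union $[n]\setminus\{\s(1)\}$, the value $\s(1)$ being the only one omitted. Likewise, classifying each adjacent pair $(\tau(i),\tau(i+1))$ with $i\in[n-1]$ as a big ascent, a descent, or a succession shows that $\Basc_B(\tau)$, $\Des_B(\tau)$, $\Suc_B(\tau)$ partition $\{\tau(2),\dots,\tau(n)\}=[n]\setminus\{\tau(1)\}$. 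By the first paragraph these ground sets coincide, so it suffices to establish two of the three set-equalities and obtain the third by complementation.

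I would deduce those two equalities from a single adjacency fact about $\theta_1$: in $\tau=\theta_1(\s')$ each letter $m$ that is not the largest letter of its $\s'$-cycle is immediately followed by $\s'(m)$, whereas every cycle-boundary step is a descent (the blocks being ordered by decreasing maxima). As $\s'(m)=\s(m+1)$ for $m\le n-1$, an ascending adjacency must be interior and reads $(m,\s(m+1))$; it is a big ascent exactly when $\s(m+1)>m+1$, i.e.\ when $m+1\ge 2$ is an excedance position of $\s$, and a succession exactly when $\s(m+1)=m+1$, i.e.\ when $m+1\ge 2$ is a fixed point. Running this equivalence in both directions (for the reverse inclusions one uses that $\s(m+1)>m$ forces $m$ not to be a cycle-maximum of $\s'$, hence the pair $(m,\s(m+1))$ to be interior) gives $\Basc_B(\tau)=\widehat{\Exc}_v(\s)$ and $\Suc_B(\tau)=\widehat{\Fix}(\s)$; complementation in $[n]\setminus\{\s(1)\}$ then yields $\Des_B(\tau)=\Drop_v(\s)$, which is exactly \eqref{properties-rho}. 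Taking cardinalities, and using that $i\mapsto\tau(i+1)$ is injective, gives the ``in particular'' statement $(\widehat{\exc},\drop,\widehat{\fix})\,\s=(\basc,\des,\suc)\,\varphi(\s)$.

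The main obstacle is the bookkeeping of the fundamental transformation: one must pin down precisely which letters are cycle-maxima of $\s'$, so as to know exactly when $\tau(i+1)=\s'(m)$ holds and when a boundary descent intervenes, and one must keep track of the single boundary value $\s(1)=\tau(1)$ that is excluded on both sides. Once this adjacency structure is in hand, the excedance/drop/fixed-point trichotomy for $\s$ matches the big-ascent/descent/succession trichotomy for $\varphi(\s)$ term by term.
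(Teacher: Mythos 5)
Your argument is correct and is essentially the paper's own proof: both rest on the shift identity $\s'(i-1)=\s(i)$ together with the adjacency structure of the $\theta_1$-variant of FFT (cycle maxima at block ends, blocks in decreasing order of maxima), so that ascents of $\varphi(\s)$ are interior and read $(m,\s(m+1))$, transporting the excedance/fixed-point/drop trichotomy to big ascents/successions/descents exactly as in the paper; you also correctly diagnosed that the paper's reference to $\theta_2$ in the definition of $\varphi$ is a slip, since its example and proof use $\widetilde{\s'}$, i.e.\ the $\theta_1$-type transformation (with literal $\theta_2$ both $\varphi(\s)(1)=\s(1)$ and \eqref{properties-rho} would fail). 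Your one organizational departure---proving $\Basc_B=\widehat{\Exc}_v$ and $\Suc_B=\widehat{\Fix}$ and deducing $\Des_B=\Drop_v$ by complementation in $[n]\setminus\{\s(1)\}$---is if anything tidier than the paper's direct treatment of the drop case, which tacitly subsumes the cycle-boundary descents (the case where $i-1$ is a cycle maximum of $\s'$) that your ground-set argument handles automatically.
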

\begin{proof}
 Clearly the mappings 
$\s\to \widetilde{\s'}$ and  $\s\to \varphi(\s)$  have the following properties:
\begin{itemize}
\item $i\in \widehat{\Fix}(\s)$ if and only if the pair 
$(i-1, i)$ is a  cyclic succession of  $\widetilde{\s'}$, which is equivalent to $i\in \Suc_B(\varphi(\s))$.
 \item   If $\s(i)\in\Drop_v(\s)$, 
 then $i-1\ge {\s}(i)$. So the pair $(i-1, \;\s(i))$ is a 
 cyclic descent or fixed point of $\widetilde{\s'}$.
  This is equivalent to say that 
  $\s(i)\in \Des_B(\varphi(\sigma))$.
 \item If $\s(i)\in\widehat{\Exc}_B(\s)$, 
 then  $(i-1)+1< \s(i)$. This is equivalent to say that the pair $(i-1, \s(i))$ is a big ascent of $\varphi(\s)$.
 \end{itemize}
  Summarising we have  \eqref{properties-rho}.
\end{proof}
\begin{remark}
The  bijection $\varphi$ gives a combinatorial proof of \cite[Theorem 9]{MQYY24}.
\end{remark}

 Define the polynomials 
$$
F_n(x,y,t\,|\,\alpha)=\sum_{\sigma\in \S_n}x^{\widehat{\exc}(\sigma)}y^{\drop(\sigma)}t^{\widehat{\fix} (\sigma)}\alpha^{\cyc(\sigma)}.
$$
 \begin{theorem}
      For integer $n\geq 0$,  we have
      \begin{subequations}
          \begin{equation}\label{sff}
xF_{n+1}(x,y,t\,|\,\alpha)=A^{\cyc}_{n+1}(x,y,t\,|\,\alpha)+\alpha(x-t)A^{\cyc}_n(x,y,t\,|\,\alpha),
\end{equation}
where $A_0(x,y,t\,|\,\alpha)=1$, and 
\begin{equation}\label{eq:gf-F}
\sum_{n\geq 0}F_{n+1}(x,y,t\,|\,\alpha)\frac{z^n}{n!}=
\alpha e^{(y-t)z}\Big(\frac{(x-y)e^{tz}}{xe^{yz}-ye^{xz}}\Big)^{1+\alpha}.
\end{equation}
      \end{subequations}
\end{theorem}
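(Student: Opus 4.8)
The plan is to first prove the recurrence \eqref{sff} by a weight-splitting of permutations according to the behaviour at position $1$, and then to extract the generating function \eqref{eq:gf-F} from \eqref{sff} by logarithmic differentiation of the known series \eqref{equ:gen-CS-cyc}.

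\textbf{Step 1: the recurrence.} The key observation is that the weight of a permutation $\sigma$ in $F_n$ differs from its weight in $A^{\cyc}_n$ only through position $1$, which is never counted by $\widehat{\exc}$ or $\widehat{\fix}$. I would split $\S_{n+1}$ according to the value $\sigma(1)$. If $\sigma(1)=1$, deleting the fixed point $1$ and relabelling gives a permutation on an $n$-element set; the cycle $(1)$ contributes $t\alpha$ to the $A^{\cyc}$-weight but only $\alpha$ to the $F$-weight, so these permutations contribute $t\alpha\,A_n^{\cyc}$ to $A_{n+1}^{\cyc}$ and $\alpha\,A_n^{\cyc}$ to $F_{n+1}$. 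If $\sigma(1)>1$, then position $1$ is an excedance recorded by $\exc$ but not by $\widehat{\exc}$, while $\drop$, $\cyc$ and the remaining fixed points and excedances agree; hence for each such $\sigma$ the $A^{\cyc}$-weight is exactly $x$ times the $F$-weight. Writing $R$ for the common sum of $F$-weights over the permutations with $\sigma(1)>1$, I obtain $A_{n+1}^{\cyc}=t\alpha\,A_n^{\cyc}+xR$ and $F_{n+1}=\alpha\,A_n^{\cyc}+R$; eliminating $R$ yields $xF_{n+1}=A_{n+1}^{\cyc}+\alpha(x-t)A_n^{\cyc}$, which is \eqref{sff}. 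The base case $n=0$ checks directly, since $F_1=\alpha$ and $A_1^{\cyc}=\alpha t$ give $xF_1=x\alpha=A_1^{\cyc}+\alpha(x-t)A_0^{\cyc}$.

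\textbf{Step 2: the generating function.} Put $G(z):=\sum_{n\ge0}A_n^{\cyc}z^n/n!$, which by \eqref{equ:gen-CS-cyc} equals $\big((x-y)e^{tz}/D\big)^{\alpha}$ with $D:=xe^{yz}-ye^{xz}$. Multiplying \eqref{sff} by $z^n/n!$ and summing over $n\ge0$, and using $\sum_{n\ge0}A_{n+1}^{\cyc}z^n/n!=G'(z)$, I get
\[
x\sum_{n\ge0}F_{n+1}\frac{z^n}{n!}=G'(z)+\alpha(x-t)G(z).
\]
Logarithmic differentiation gives $G'/G=\alpha\,(t-D'/D)$ with $D'=xy(e^{yz}-e^{xz})$, so the right-hand side equals $\alpha\,(x-D'/D)\,G$. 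The algebraic identity $1-D'/(xD)=(x-y)e^{yz}/D$ (obtained by clearing denominators) then yields
\[
\sum_{n\ge0}F_{n+1}\frac{z^n}{n!}=\alpha\,\frac{(x-y)e^{yz}}{D}\,G(z)=\alpha\,e^{(y-t)z}\Big(\frac{(x-y)e^{tz}}{D}\Big)^{1+\alpha},
\]
where the last equality uses $e^{yz}=e^{(y-t)z}e^{tz}$ to absorb one factor into the $(1+\alpha)$-th power. This is exactly \eqref{eq:gf-F}.

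No step is a genuine obstacle here; the points demanding care are the bookkeeping at position $1$---in particular the clean fact that permutations with $\sigma(1)>1$ contribute to $A_{n+1}^{\cyc}$ exactly $x$ times what they contribute to $F_{n+1}$---and the simplification $1-D'/(xD)=(x-y)e^{yz}/D$, which is precisely what makes the extra factor $e^{(y-t)z}$ and the exponent $1+\alpha$ appear.
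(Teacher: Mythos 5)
Your proposal is correct and follows essentially the same route as the paper: the same split of $\S_{n+1}$ according to whether $\sigma(1)=1$ or $\sigma(1)>1$ (with the same observation that in the latter case the $A^{\cyc}$-weight is $x$ times the $F$-weight), followed by summing the recurrence against $z^n/n!$ and invoking \eqref{equ:gen-CS-cyc}. The only difference is that you carry out explicitly, via logarithmic differentiation and the identity $xD-D'=x(x-y)e^{yz}$, the final simplification that the paper dismisses with ``which can be shown (or by Maple)'', which is a welcome completion rather than a different method.
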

\begin{proof} 
Note that  
\begin{align*}
    \sum_{\mycom{\sigma\in \S_{n+1}}{\sigma(1)=1}} x^{\exc(\sigma)} y^{\drop(\sigma)}
    t^{\fix(\sigma)}\alpha^{\cyc(\sigma)}
    &=\alpha t A^{\cyc}_{n}(x,y,t\,|\,\alpha),\\
\sum_{\mycom{\sigma\in \S_{n+1}}{\sigma(1)=1}}x^{\widehat{\exc}(\sigma)}y^{\drop(\sigma)}t^{\widehat{\fix} (\sigma)}\alpha^{\cyc(\sigma)}&=\alpha A^{\cyc}_{n}(x,y,t\,|\,\alpha).
\end{align*}
According to  the position of 1 in a permutation,  which is 1 or greater than 1, we have 
\begin{align*}
A^{\cyc}_{n+1}(x, y, t\,|\,\a)
&=\a t A^{\cyc}_{n}(x, y, t\,|\,\a)
+x\sum_{\mycom{\sigma\in \S_{n+1}}{\sigma(1)>1}}x^{\widehat{\exc}(\sigma)}y^{\drop(\sigma)}t^{\widehat{\fix} (\sigma)}\alpha^{\cyc(\sigma)},\\
F_{n+1}(x,y,t\,|\,\alpha)
&=\a A^{\cyc}_{n}(x, y, t\,|\,\a)+
\sum_{\mycom{\sigma\in \S_{n+1}}{\sigma(1)>1}}x^{\widehat{\exc}(\sigma)}y^{\drop(\sigma)}t^{\widehat{\fix} (\sigma)}\alpha^{\cyc(\sigma)}.
\end{align*}
Combining  the above two equations yields 
 recurrence~\eqref{sff}, which is equivalent to 
\begin{equation}\label{eq4}
x\sum_{n\geq 1}F_n(x,y,t\,|\,\alpha)\frac{z^n}{n!}
=\sum_{n\geq 1}A^{\cyc}_{n}(x,y,t\,|\,\alpha)\frac{z^n}{n!}+\alpha (x-t)
\sum_{n\geq 1} A^{\cyc}_{n-1}(x,y,t\,|\,\alpha)\frac{z^n}{n!}.
\end{equation}
Invoking the generating function~\eqref{equ:gen-CS-cyc} and applying 
$\frac{\partial }{\partial z}$ to \eqref{eq4}
we obtain
\[
x\sum_{n\geq 1}F_n(x,y,t\,|\,\alpha)\frac{z^{n-1}}{(n-1)!}=\left(\frac{\partial }{\partial z}+\alpha(x-t)\right)\left(\frac{(x-y)e^{tz}}{xe^{yz}-ye^{xz}}\right)^\alpha,
\]
which can be shown (or by Maple) to be equivalent to 
\eqref{eq:gf-F}.
\end{proof}

\begin{remark} By Theorem~\ref{sucfix}, when $x=\a=1$, 
Eq.~\eqref{eq:gf-F} reduces to  the exponential generating function of the statistic $(\des, \suc)$, which  was first obtained by Roselle~\cite{Ro68}.
\end{remark}

\section{Open problems}
A permutation $\s=\s_1\s_2\ldots $ of an ordered set is said to (\emph{increasing}) \emph{alternating} if $\s_1<\s_2>\s_3<\ldots$. Let $\UD_n$ be the set of up-down permutations in $\S_n$.
A cycle (cyclic permutation) $(\s_1,\s_2,\dots)$ of an ordered set is said to be   \emph{up-down} if $\s_1<\s_2>\s_3<\ldots$ and 
 $\s_1$ is the smallest element.   A permutation $\pi$ is  said to be \emph{cycle-up-down} if it is a product of disjoint up-down cycles. Let $\Delta_n$ be the set of  cycle-up-down permutations of $[n]$. Deutsch and Elizalde~\cite{DE09} proved the following bivariate generating function 
\begin{equation}\label{exp-gen-up-down}
\sum_{n\ge 0}\sum_{\pi\in\Delta_n}t^{\fix(\pi)}\a^{\cyc(\pi)}\frac{z^n}{n!}=\frac{e^{\a(t-1)z}}{(1-\sin z)^{\a}}.
\end{equation}
When $x=1$, Eq.~\eqref{egf-formula-web} reduces to 
\begin{align}\label{egf-formula-web: x=1}
\sum_{n\ge 0}\sum_{\s\in\Web_n}t^{\fix(\s)}\a^{\cyc(\s)}\frac{z^n}{n!}=
\frac{e^{\a(t-1)z}}{(1-\sin z)^{\a}}.
\end{align}
In~\cite[Section~2]{FH16}, Foata and Han constructed a bijection  $\eta:\A_{n+1}^1\to \UD_{n+1}$ preserving the first letter. Applying the mapping $\eta$ to each cycle of $\s\in \Web_n$  immediately yields a bijection 
$\widetilde\eta:\Web_n\to \Delta_n$ such that $(\fix, \cyc)\,\s=(\fix, \cyc)\,\widetilde\eta(\s)$.

Hwang, Jang and Oh~\cite[Corollary 4.4]{HJO23} proved the $t=1$ case of \eqref{egf-formula-web: x=1}.
By Theorem~\ref{main-theorem1},
we also have 
\begin{align}\label{egf-egf-andré: x=1}
\sum_{n\ge 0}\sum_{\s\in\A_{n+1}^i}
t^{\rlminda(\s)}\a^{\rlmin(\s)-1}\frac{z^n}{n!}=
\frac{e^{\a(t-1)z}}{(1-\sin z)^{\a}},
\end{align}
  where $i=1,2$.

 Disanto~\cite[Proposition 1]{Dis14} proved the $t=1$ case of \eqref{egf-egf-andré: x=1} and 
  asked for  a bijection
from $\A_{n+1}^1$ to  $\Delta_n$  enlightening the correspondence between $\rlmin$ and $\cyc$.
In fact, 
combining the bijections $\phi^{-1}: \A_{n+1}^1\to \Web_n$ in Section \ref{mapping} and $\widetilde\eta:\Web_n\to \Delta_n$, we obtain a bijetion  $\Xi:=\widetilde{\eta}\circ\phi^{-1}: \A_{n+1}^1\to \Delta_{n}$ such that 
$(\rlminda,\rlmin)\,\s=(\fix, \cyc)\,\Xi(\s)$ for $\s\in \A_{n+1}^1$.
\begin{problem}
Find a suitable statistic $\widehat{\drop}$ over $\Delta_n$ and  bijection   $\Lambda:\Web_n\to \Delta_n$ such that $\drop\,(\sigma)=\widehat{\drop}\,(\Lambda(\sigma))$ for $\sigma\in \Web_n$.
\end{problem}
In view of the work~\cite{PZ21}, where a $(p,q)$-analogue of \eqref{euler-gamma} is proved,  the following question is natural.
\begin{problem}
Find  a 
combinatorial explanation of
 the identity 
 $\gamma_{n,j}(\a,t)=2^jd_{n,j}(\a,t)$ for  $0\le j\le \lfloor n/2\rfloor$. 
\end{problem}

%
%
%
%
%
\subsection*{Acknowledgement}

 We wish to thank Zhicong Lin and Qiong Qiong Pan for helpful conversations and /or correspondance. The first author was supported by the \emph{China Scholarship Council} (No. 202206220034).

%
%
%



\begin{thebibliography}{99}

\bibitem{AS12}
C.\,A. Athanasiadis, C. Savvidou, 
The local $h$-vector of the cluster subdivision of a simplex,
Sém. Lothar. Combin., \textbf{66} (2011/12), Art. B66c, 21 pp.

\bibitem{At18}
C.\,A. Athanasiadis, Gamma-positivity in combinatorics and geometry, Sém. Lothar. Combin., \text{77} (2018), Article B77i, 64pp (electronic).

\bibitem{Br08}
 P. Br\"and\'en,  Actions on permutations and unimodality of descent polynomials, European J. Combin., \textbf{29} (2008), no. 2, 514-531.

\bibitem{Br00}
F. Brenti, 
A class of $q$-symmetric functions arising from plethysm,
J. Combin. Theory Ser. A, \textbf{91} (2000), no. 1-2, 137–170.

\bibitem{CS74} 
 L. Carlitz, R. Scoville, Generalised Eulerian numbers: combinatorial applications, J. Reine Angew. Math. \textbf{265} (1974)  110--137.
 
 \bibitem{CS77}
 L. Carlitz, R. Scoville, Some permutation problems, J. Comb. Theory A, \textbf{22} (1977) 129--145.
 
\bibitem{CS11}
 C.\,O. Chow, W. C. Shiu, Counting simsun permutations by descents, Ann. Comb., \textbf{15} (2011) 625–635.



 
 \bibitem{CHZ97}
R.\,J. Clarke, G.N. Han, J. Zeng, A combinatorial interpretation of the Seidel generation of $q$-derangement numbers, Ann. Comb., \textbf{1} (1997), pp. 313-327.


 \bibitem{CJZ20}
M.\,C. Cooper, W.\,S. Jones, Y. Zhuang, On the joint distribution of cyclic valleys and excedances over conjugacy classes of $\S_n$, Adv. Appl. Math. \text{115} (2020) 101999.

\bibitem{CGK10}
F. Chung, R. Graham, D. Knuth, A symmetrical Eulerian identity, J. Comb., \textbf{1} (2010), no. 1, 29–38.


\bibitem{DE09}
E. Deutsch, S. Elizalde, Cycle up-down permutations, Australasian Journal of  Combinatorics, Vol. \textbf{50} (2011), 187--199.

\bibitem{DLP24}
Y. Dong, Z. Lin, Q. Pan, A $q$-analog of the Stirling-Euerlian polynomials, arXiv:2404.09777v1.

\bibitem{Dis14}
F. Disanto, André permutations, right-to-left and left-to-right minma, Sém. Lothar. Combin., \textbf{70} (2014), Article B70f.


\bibitem{FS70} 
D. Foata, M.-P. Schützenberger, Théorie géométrique des polynômes eulériens, Lecture Notes in Math., vol. 138, Springer-Verlag, Berlin, 1970.

\bibitem{FSch73}
D. Foata, M.-P. Schützenberger, Nombres d'Euler et permutations alternantes, in J. N. Srivastava et al. (eds.), A Survey of Combinatorial Theory, North-Holland, Amsterdam, 1973, pp. 173--187.

 


\bibitem{Fo10}
D. Foata, 
Eulerian polynomials: from Euler's time to the present, The legacy of Alladi Ramakrishnan in the mathematical sciences, 253–273, Springer, New York, 2010.


\bibitem{FH01}
D. Foata, G.-N. Han, Arbres minmax et polynômes d'andré, Adv. in Appl. Math. \textbf{27} (2001), no.2-3, 367–389

\bibitem{FH16}
D. Foata, G.-N. Han, André permutation calculus: a twin seidel matrix sequence, Sém. Lothar. Combin. \textbf{73} (2016) Article B73e.




\bibitem{FS74} D. Foata, V. Strehl,  Rearrangements of the symmetric group and enumerative properties of the tangent and secant numbers, Math. Z. \textbf{137} (1974) 257-264.

\bibitem{Ge91}
I. M. Gessel, A coloring problem, Amer. Math. Monthly \textbf{98} (1991), 530–533.


\bibitem{HR98}
G. Hetyei, E. Reiner, Permutation trees and variation statistics, European J. Combin. \textbf{19} (1998), no.7, 847–866.

\bibitem{HJO23}
B.-H. Hwang,  J. Jang, J. Oh, A combinatorial model for the transition matrix between the Specht and {$\rm SL_2$}-web bases, Forum Math. Sigma (2023), Vol. \textbf{11} e82 1--17.

 \bibitem{Ji23} 
 K. Q. Ji, The $(\alpha,\beta)$-Eulerian Polynomials and Descent-Stirling Statistics on Permutations, arXiv:2310.01053.

 \bibitem{JL23} 
  K. Q. Ji, and Z. Lin,  The binomial-Stirling-Eulerian polynomials, 
  European J. Combin. \textbf{120} (2024), Paper No. 103962.
 
   \bibitem{KZ01} 
 D.\,S. Kim, J. Zeng, 
A new decomposition of derangements,
J. Combin. Theory Ser. A, \textbf{96} (2001), no.1, 192–198.

  \bibitem{KZ02} 
G. Ksavrelof, J. Zeng, Two involutions for signed excedance numbers, Sémin. Lothar. Comb. \textbf{49} (2002/04) B49e.

\bibitem{LZ15}
Z. Lin, J. Zeng, 
The $\gamma$-positivity of basic Eulerian polynomials via group actions,
J. Combin. Theory Ser. A,\textbf{135} (2015), 112–129.

  \bibitem{Lo83} 
M. Lothaire, Combinatorics on Words, in: Encyclopedia of Mathematics and its Applications, vol. 17, Addison-Wesley Publishing Co., Reading, Mass, 1983. 

\bibitem{M2Y224}
S.-M. Ma, J. Ma, J. Yeh, Y.-N. Yeh, Excedance-type polynomials, gamma-positivity and alternatingly increasing property, 
European J. Combin.  (2024),  Article 103869.



 
\bibitem{MQYY24}
S.-M. Ma, H. Qi, J. Yeh, Y.-N. Yeh, On the joint distributions of succession and Eulerian statistics, arXiv: 2401.01760v1.
%

\bibitem{OEIS}
OEIS Foundation Inc., The on-line encyclopedia of integer sequences, http://oeis.org.
   

   
   \bibitem{PZ21}
Q. Pan, J. Zeng, 
Br\"and\'en's $(p,q)$-Eulerian polynomials, Andr\'e permutations and continued fractions,
J. Combin. Theory Ser. A, \textbf{181} (2021), Paper No. 105445, 21 pp.

                            
\bibitem{Pe15} 
T.\,K. Petersen, Eulerian Numbers, with a foreword by Richard Stanley, Birkhäuser Advanced Texts: Basler Lehrbücher, Birkhäuser/Springer, New York, 2015.

\bibitem{PS24} 
 P.  Petrullo, D.  Senato,
Generalised Eulerian polynomials with a nonnegative gamma vector,
   J. Integer Sequences, \textbf{27} (2024), Article 24.1.5                         
                            

\bibitem{PRW08}
A. Postnikov, V. Reiner, L. Williams, Faces of generalised permutohedra, Doc. Math., \textbf{13} (2008), 207--273.

\bibitem{Ro68}
 D.\,P. Roselle, 
Permutations by number of rises and successions,
Proc. Amer. Math. Soc. \textbf{19} (1968), 8–16.

\bibitem{SWG83}
L.\,W. Shapiro, W. J. Woan, S. Getu, Runs, slides, and moments, SIAM J. Alg. Disc. Meth. \textbf{4} (4) (1983) 456-466. 


\bibitem{SW20}
J. Shareshian, M.L. Wachs, Gamma-positivity of variations of Eulerian polynomials, J. Comb., \textbf{11}
(2020), 1–33


\bibitem{SZ12} H. Shin, J. Zeng, The symmetric and unimodal expansion of Eulerian polynomials via continued fractions. European J. Combin. \textbf{33} (2012), no. 2, 111–127.

\bibitem{St12}
R.\,P. Stanley, Enumerative combinatorics. Volume 1. Second edition. Cambridge Studies in Advanced Mathematics, \textbf{49}. Cambridge University Press, Cambridge, 2012.

\bibitem{St99} 
R.\,P. Stanley, Enumerative combinatorics. Volume 2, Cambridge Studies in Advanced Mathematics, \textbf{62}. Cambridge University Press, Cambridge, 1999.


\bibitem{SW14}H. Sun, Y. Wang, A group action on derangements, Electron. J. Comb. \textbf{21} (1) (2014) 1.67. 

\bibitem{TZ19}
J. Tirrell, Y. Zhuang, Hopping from Chebyshev polynomials to permutation statistics. Electron. J. Comb., \textbf{26} (3): P3.27, 2019.

\bibitem{Ze93} 
J. Zeng, Énumérations de permutations et $J$-fractions continues, European J. Combin., \textbf{14} (1993), 373-382.

\end{thebibliography}
\end{document}